\theoremstyle{plain}
\newtheorem{lemma}{Lemma}[section]
\newtheorem{thm}[lemma]{Theorem}
\newtheorem{prop}[lemma]{Proposition}
\theoremstyle{definition}
\newtheorem{defn}[lemma]{Definition}
\theoremstyle{remark}
\newtheorem{rem}[lemma]{Remark}
\numberwithin{equation}{section}
\numberwithin{figure}{section}
\newcommand{\mA}{\mathcal{A}}
\newcommand{\mF}{\mathcal{F}}
\newcommand{\mH}{\mathcal{H}}
\newcommand{\cC}{\mathcal{C}}
\newcommand{\N}{\mathbb{N}}
\newcommand{\Z}{\mathbb{Z}}
\newcommand{\R}{\mathbb{R}}
\newcommand{\C}{\mathbb{C}}
\DeclareMathOperator{\tr}{\mathrm Tr}
\DeclareMathOperator{\ind}{\mathrm Ind}
\DeclareMathOperator{\ch}{\mathrm ch}
\DeclareMathOperator{\Id}{\mathrm Id}
\DeclareMathOperator{\Ind}{\mathrm Ind}
\DeclareMathOperator{\Ahat}{ \widehat{\mathrm{A}}}
\DeclareMathOperator{\Sf}{\mathrm{sf}}
\newcommand{\wi}{\widetilde}
\newcommand{\ul}{\underline}
\begin{document}

\title{Bismut-Cheeger eta form and higher spectral flow}
\author{Bo Liu}
\address{School of Mathematical Sciences,
	Shanghai  Key Laboratory of PMMP,
	East China Normal University, 
	Shanghai, 200241
	People's Republic of China}
\email{bliu@math.ecnu.edu.cn}
\thanks {}

\subjclass[2020]{58J28, 58J20, 58J30, 19L50}
\keywords{eta invariant, eta form, higher spectral flow, group action}
\date{\today}

\dedicatory{}

\begin{abstract}
We prove the variation formula, embedding formula
and the adiabatic limit formula for equivariant Bismut-Cheeger eta forms with a fiberwise compact Lie group action. We only need the condition that the kernels of corresponding 
fiberwise Dirac operators form vector bundles. 
\end{abstract}

\maketitle

\settocdepth{section}
\section{Introduction}\label{s00}

The Atiyah-Patodi-Singer eta invariant is the boundary contribution
of the index theorem for manifold with boundary with the global boundary condition \cite{APS73}. 
Its family extension, the Bismut-Cheeger eta form, 
is well-defined for a fibration
of spin manifolds when the kernel of the fiberwise Dirac
operator forms a vector bundle \cite{BC89}. Moreover, the Bismut-Cheeger eta form can be
naturally extended to the equivariant case for a fiberwise compact Lie group action.
 If the base manifold of the 
fibration is a point and the group is trivial, the equivariant eta form degenerates to the eta invariant.
So the equivariant eta form can be considered as the equivariant higher degree version of the eta invariant.

The purpose of this paper is to generalize some properties of
eta invariants: variation formula, embedding formula and 
the adiabatic limit formula, to equivariant Bismut-Cheeger 
eta forms. 
For the variation formula and the adiabatic limit formula,
we remove additional technical assumptions in previous
paper \cite{Liu17}.

In \cite{Embed,Liu21}, the author proved these properties for the 
equivariant eta form with perturbation, which is the equivariant
version of the modified eta form in \cite{MP97a,MP97b}.
In this paper, we obtain
these formulas for the equivariant version of the original
Bismut-Cheeger eta form by comparing these two versions of equivariant
eta forms and the analysis of the 
equivariant Dai-Zhang higher spectral flow.
%

\subsection{Properties of reduced eta invariants}\label{s0001}

Let $(X,g^{TX})$ be an odd dimensional closed spin manifold. 
Let 
$\ul{E}:=(E, h^E, \nabla^E)$ be a triple which consists of
a complex vector bundle $E$ over $X$, a Hermitian metric
$h^E$ on $E$ and a connection $\nabla^E$ preserving $h^E$. We call $\ul{E}$ a geometric triple.
Let $D_X^E$ be the Dirac operator twisted with these geometric 
data. 
The Atiyah-Patodi-Singer eta invariant of the Dirac operator
is defined by
\begin{align}\label{eq:0.01}
\eta\left(D_X^E\right):=\int_0^{\infty}\tr\left[D_X^E\exp
\left(-u(D_X^E)^2 \right) \right]\frac{du}{\sqrt{\pi u}},
\end{align}
which is a global spectral invariant. The reduced version
\begin{align}\label{eq:0.02}
\bar{\eta}\left(D_X^E\right):=\frac{1}{2}\eta\left(D_X^E\right)+
\frac{1}{2}\dim \ker D_X^E
\end{align} 
appears as the boundary term in the index theorem of 
Atiyah-Patodi-Singer \cite{APS73} for compact manifold with boundary.

Note that the eta invariant depends on the geometric data
$(g^{TX},h^E,\nabla^E)$. So it is not a topological invariant.
But when the geometric data varies, the variation formula
could be written explicitly.

	For $i=0,1$,
let $g_i^{TX}$ be Riemannian metrics on $TX$ and
$\nabla_i^{TX}$ be Levi-Civita connections with respect to $g_i^{TX}$. 
Let $\ul{E_i}=(E,h_i^E, \nabla_i^E)$, $i=0,1$, be two geometrc triples over $X$. 
Let $D_i$ be the Dirac operators associated with the geometric data $(g_i^{TX}, \nabla_i^E)$
respectively.

\begin{thm}\label{thm:0.01} \cite{APS76}
The difference of two reduced eta invariants
\begin{multline}\label{eq:0.03}
\bar{\eta}\left(D_1\right)-\bar{\eta}\left(D_0\right)
=\int_X\wi{\Ahat}\left(TX, \nabla_0^{TX}, \nabla_1^{TX} \right)\ch\left(E, \nabla_0^{E}\right)
\\
+\int_X\Ahat\left(TX, \nabla_1^{TX}\right)\wi{\ch}\left(E,\nabla_0^{E},\nabla_1^{E}\right)
+\Sf\left(D_0, D_1\right).
\end{multline}
Here $\Ahat(\cdot)$, $\ch(\cdot)$ are the $\Ahat$-form and the Chern
character form \cite[\S 1.5]{BGV04}, $\wi{\Ahat}(\cdot)$, $\wi{\ch}(\cdot)$ are  corresponding
Chern-Simons forms \cite[Definition B.5.3]{MM07} and
 $\Sf\left(D_0, D_1\right)\in \Z$ is the spectral flow
 between $D_0$ and $D_1$ \cite[p94]{APS76}. 
\end{thm}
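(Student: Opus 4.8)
To establish Theorem~\ref{thm:0.01} I would follow the transgression argument of Atiyah--Patodi--Singer \cite{APS76}: join the two sets of geometric data by a smooth path and track the reduced eta invariant along it. Since the space of Riemannian metrics on $TX$ and, for each fixed metric, the space of $h^E$-compatible connections are convex, I set $\nabla^E_s:=(1-s)\nabla^E_0+s\nabla^E_1$, let $\nabla^{TX}_s$ be the Levi--Civita connection of $g^{TX}_s:=(1-s)g^{TX}_0+sg^{TX}_1$, and let $h^E_s$ vary smoothly between $h^E_0$ and $h^E_1$. The operators $D_s$ act a priori on the $s$-dependent $L^2$-spaces, so I would conjugate by the canonical unitary identifications induced by the change of metric to obtain a genuinely smooth one-parameter family of formally self-adjoint elliptic first order operators on a fixed Hilbert space, still denoted $D_s$, with $D_0$ and $D_1$ unitarily equivalent to the given Dirac operators. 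Both sides of \eqref{eq:0.03} depend only on the endpoints: the right-hand integrals are unchanged if the Chern--Simons forms are altered by exact forms, since $X$ is closed, and $\Sf(D_0,D_1)$ is invariant under homotopies of the path rel endpoints; so working with this particular path is harmless.

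Next I would analyse the function $s\mapsto\bar\eta(D_s)$. After a small perturbation of the path (keeping its endpoints fixed) one may assume $D_s$ is non-invertible only at finitely many parameters $0<s_1<\dots<s_N<1$, at each of which a single eigenvalue crosses $0$ with nonzero speed. On each complementary open interval $\dim\ker D_s$ is constant; the parity cancellations of \cite{APS76} make the integrand in \eqref{eq:0.01} integrable near $t=0$, and $\tr[D_s\exp(-tD_s^2)]$ decays exponentially as $t\to\infty$ since it has no $\ker D_s$-component, so \eqref{eq:0.01} converges and $s\mapsto\eta(D_s)$, hence $\bar\eta(D_s)$, is smooth there. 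Across each $s_j$ the contributions $\tfrac12\eta$ and $\tfrac12\dim\ker D_s$ jump so that $\bar\eta(D_s)$ jumps by the signed number of eigenvalues crossing $0$; summing over $j$ and using the standard conventions for $\Sf$ at (possibly non-invertible) endpoints, the total jump of $\bar\eta$ over $[0,1]$ equals $\Sf(D_0,D_1)$.

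Finally I would compute the smooth variation on a subinterval. Differentiating \eqref{eq:0.01} under the integral sign and using Duhamel's formula, cyclicity of the trace and $D_s^2e^{-tD_s^2}=-\partial_t e^{-tD_s^2}$, one obtains the exact identity
\begin{equation*}
\frac{\partial}{\partial s}\left(\frac{1}{\sqrt{\pi t}}\,\tr\left[D_s\,e^{-tD_s^2}\right]\right)
=\frac{\partial}{\partial t}\left(\frac{2\sqrt t}{\sqrt\pi}\,\tr\left[\dot D_s\,e^{-tD_s^2}\right]\right),\qquad \dot D_s:=\partial_s D_s .
\end{equation*}
Integrating over $t\in(0,\infty)$, the $t\to\infty$ boundary term vanishes because $e^{-tD_s^2}\to P_{\ker D_s}$ and $\tr[\dot D_s\,P_{\ker D_s}]=0$ when the kernel has locally constant dimension, while the $t\to0$ boundary term is finite (again by the odd-dimensional parity cancellations) and is computed, via the heat kernel asymptotics and Getzler's rescaling, as $\int_X$ of the transgression density along the path of the closed form $\Ahat(TX,\nabla^{TX}_s)\,\ch(E,\nabla^E_s)$. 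Thus $\partial_s\bar\eta(D_s)$ equals that transgression integral on each subinterval. Integrating in $s$ over $[0,1]$ and invoking the definition of the Chern--Simons forms $\wi{\Ahat}$, $\wi{\ch}$ together with the Leibniz rule for transgression (modulo exact forms, which die under $\int_X$), the local terms assemble into the two curvature integrals of \eqref{eq:0.03}, while the jumps at $s_1,\dots,s_N$ contribute $\Sf(D_0,D_1)$, giving \eqref{eq:0.03}.

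I expect the main obstacle to be the analytic bookkeeping at the two ends of the $t$-integral: justifying the differentiation under the integral sign and the interchange with $\lim_{t\to0}$, establishing the odd-dimensional parity cancellations that make the $t\to0$ limit finite, and identifying that limit with the Chern--Simons transgression through the local index theorem; the reduction to a path with only simple transversal zero crossings, and the careful treatment of non-invertible endpoints, are routine but must be carried out. Alternatively, one may derive \eqref{eq:0.03} from the Atiyah--Patodi--Singer index theorem on $X\times[0,1]$ equipped with a metric and connection interpolating between the two data sets and of product type near both ends: integrating the index density of $X\times[0,1]$ along the $[0,1]$-factor produces the two Chern--Simons terms, while the APS index of the resulting $\Z_2$-graded twisted Dirac operator equals $-\Sf(D_0,D_1)$; this repackages the same analytic input.
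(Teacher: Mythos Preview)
Your proof sketch is correct and follows the classical transgression argument of Atiyah--Patodi--Singer: join the geometric data by a path, compute the smooth variation of $\eta(D_s)$ via Duhamel's formula and the small-time asymptotics of the heat kernel, and identify the jumps of $\bar\eta(D_s)$ with the spectral flow. The analytic points you flag (differentiation under the integral, parity cancellations at $t\to0$, reduction to transversal crossings) are exactly the ones that need attention, and your alternative via the APS index on $X\times[0,1]$ is also standard and equivalent.

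However, the paper does not give its own proof of Theorem~\ref{thm:0.01}: it is quoted from \cite{APS76} as background. What the paper actually proves is the equivariant family version, Theorem~\ref{thm:0.04}, which specializes to Theorem~\ref{thm:0.01} when $B$ is a point, $\dim X$ is odd, and $G=\{e\}$. The route taken there is quite different from yours: rather than differentiating $\eta$ along a path directly, the paper first compares the reduced Bismut--Cheeger eta form $\tilde{\bar\eta}^{BC}_g$ with the eta form with perturbation $\tilde\eta_g(\pi,P^{\ker})$ (Proposition~\ref{prop:1.03}), and then invokes the already-established variation formula for perturbed eta forms (Proposition~\ref{prop:2.01}, from \cite{Liu21}), where the correction term is an equivariant higher spectral flow. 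Your direct transgression is more elementary and self-contained in the classical case; the paper's detour through perturbed eta forms is what allows the argument to survive in the family and equivariant setting, where the kernel bundles may not deform smoothly and a naive path argument would break down.
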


Another important property of the eta invariant is the 
Bismut-Zhang embedding formula \cite{BZ93}, which plays the 
key role in the differential index theorem 
\cite{FL10} and the equivariant version of which \cite{Embed}
is an important component in the proof of the localization formula
of eta invariants \cite{LiuMa20}.

\begin{thm}\label{thm:0.02}\cite{BZ93}
Let $i:Y\to X$ be an 
embedding between two odd dimensional closed 
spin manifolds. Let $g^{TX}$ be a Riemannian metric on $TX$ and $g^{TY}$ be the induced metric on $TY$. 
Let $\nabla^{TY}$ and $\nabla^{TX}$ be corresponding 
Levi-Civita connections.
Let $\ul{\mu}=(\mu, h^{\mu},\nabla^{\mu})$ 
be a geometric triple over $Y$. Then there exist geometric triples
$\ul{\xi_{\pm}}=(\xi_{\pm},h^{\xi,\pm},\nabla^{\xi,\pm})$ over $X$ satisfying the metric condition \cite[(1.10)]{BZ93}
and a spectral flow term $\Sf(Y,X)\in \Z$ such that
\begin{multline}\label{eq:0.04}
\bar{\eta}(D_X^{\xi_+})-\bar{\eta}(D_X^{\xi_-})
=\bar{\eta}(D_Y^{\mu})+\int_X\Ahat(TX, \nabla^{TX})\gamma(Y,X)
\\
+\int_Y\wi{\Ahat}(TY, \nabla^{TY,N},\nabla^{TX|_Y})
\Ahat^{-1}(N,\nabla^N)\ch(E,\nabla^E)+\Sf(Y,X).
\end{multline}
Here $\nabla^N$
is the connection on the normal bundle $N$ of $Y$ in $X$ induced 
from the connection $\nabla^{TX}$,
 $\nabla^{TY,N}:=\nabla^{TY}\oplus\nabla^N$ 
 and $\gamma(Y,X)$ is the Bismut-Zhang current introduced 
 in \cite{BZ93}.
\end{thm}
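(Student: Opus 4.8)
\emph{Proof strategy.} The plan is to follow Bismut--Zhang \cite{BZ93}; this argument is also the blueprint for the higher-degree, equivariant version established below. The first step is purely $K$-theoretic: realize the Gysin push-forward $i_!\ul{\mu}$ by an explicit geometric resolution over $X$. After stabilizing by trivial summands one produces a $\Z_2$-graded Hermitian bundle $\xi=\xi_+\oplus\xi_-$ with metrics $h^{\xi,\pm}$, connections $\nabla^{\xi,\pm}$ preserving $h^{\xi,\pm}$, and an odd self-adjoint endomorphism $V\in\Gamma(X,\mathrm{End}(\xi))$ that is invertible on $X\setminus Y$, vanishes transversally along $Y$, and, on a tubular neighborhood of $Y$ identified with a neighborhood of the zero section of the normal bundle $N$, equals the pull-back of the Koszul complex $\Lambda^{\bullet}N^{*}\otimes\mu$ -- with $V$ given by interior multiplication by the tautological section of $N$ plus its adjoint and $\nabla^{\xi,\pm}$ induced by $\nabla^{N}$ and $\nabla^{\mu}$. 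These $(\xi_{\pm},h^{\xi,\pm},\nabla^{\xi,\pm})$ will be the triples in the statement.

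Next I would introduce the Witten-type deformation $D_{u}:=D_{X}^{\xi}+u\,V$ on $S(TX)\otimes\xi$, a smooth family of self-adjoint Dirac-type operators for $u\in[0,\infty)$. With the supertrace of $\xi$ inserted in \eqref{eq:0.01}, the value at $u=0$ is exactly $\bar{\eta}(D_{X}^{\xi_{+}})-\bar{\eta}(D_{X}^{\xi_{-}})$. Applying the variation formula for reduced eta invariants (Theorem~\ref{thm:0.01}, in the form where only the zeroth-order term of the operator moves) to the path $u\mapsto D_{u}$ and integrating gives
\begin{equation*}
\bar{\eta}(D_{X}^{\xi_{+}})-\bar{\eta}(D_{X}^{\xi_{-}})=\lim_{u\to\infty}\bar{\eta}(D_{u})+\mathcal{L}+\Sf(Y,X),
\end{equation*}
where $\Sf(Y,X)\in\Z$ is the spectral flow of $u\mapsto D_{u}$ and $\mathcal{L}$ collects the integral over $u\in[0,\infty)$ of the local transgression density together with the local part of the $u\to\infty$ heat asymptotics. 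Since the metrics and connections on $TX$ and on $\xi$ stay fixed along the path, $\mathcal{L}$ is supported near $Y$, and a Getzler-rescaling computation (as in \cite{BGV04}) evaluates it as
\begin{multline*}
\mathcal{L}=\int_{X}\Ahat(TX,\nabla^{TX})\,\gamma(Y,X)\\
+\int_{Y}\wi{\Ahat}(TY,\nabla^{TY,N},\nabla^{TX}|_{Y})\,\Ahat^{-1}(N,\nabla^{N})\,\ch(\mu,\nabla^{\mu}),
\end{multline*}
the $Y$-integral arising precisely as the correction for the discrepancy between $\nabla^{TX}|_{Y}$ and $\nabla^{TY}\oplus\nabla^{N}$, and the current $\gamma(Y,X)$ encoding, at the level of currents on $X$, the difference between $\ch(\xi,\nabla^{\xi})$ and the push-forward of $\Ahat^{-1}(N,\nabla^{N})\ch(\mu,\nabla^{\mu})$.

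It then remains to prove $\lim_{u\to\infty}\bar{\eta}(D_{u})=\bar{\eta}(D_{Y}^{\mu})$, which is the analytic heart of the matter. For $u$ large I would split $\mathrm{Spec}(D_{u})$ into a small and a large part: finite-propagation-speed estimates localize the small part to an arbitrarily thin tubular neighborhood of $Y$, where in the normal directions $u\,V$ turns $D_{X}^{\xi}$ into a harmonic-oscillator-type operator with one-dimensional kernel, spanned by a Gaussian and canonically isomorphic to $\mu$, so that the small eigenvalues of $D_{u}$ converge to the spectrum of $D_{Y}^{\mu}$ uniformly on compacta; and uniform heat-kernel bounds show the large eigenvalues contribute $o(1)$ to $\bar{\eta}(D_{u})$. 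The genuinely delicate point -- and the step I expect to be the main obstacle -- is that $\bar{\eta}$ is not continuous under spectral collision, so one must control, uniformly in large $u$, the nonzero eigenvalues that tend to $0$ and the jump of $\dim\ker D_{u}$ in order to pass to the limit; this is exactly the content of the analytic estimates of \cite{BZ93}. Combining the two steps yields \eqref{eq:0.04}. Finally, from the perspective adopted in this paper, $\Sf(Y,X)$ is the evaluation at a point of the equivariant Dai--Zhang higher spectral flow of the family $\{D_{u}\}$, and the whole argument is the $B=\mathrm{pt}$, trivial-group specialization of the embedding formula for equivariant Bismut--Cheeger eta forms established below.
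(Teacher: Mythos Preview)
Your sketch is a faithful outline of the Bismut--Zhang argument from \cite{BZ93} (together with the spectral-flow refinement of \cite{Embed}), and as such is correct in broad strokes. Note, however, that the present paper does not supply its own proof of Theorem~\ref{thm:0.02}: the result is quoted from \cite{BZ93} as background, and the paper's contribution is the generalization Theorem~\ref{thm:0.05}.

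Your final sentence --- that the argument you sketch is the $B=\mathrm{pt}$, trivial-group specialization of the proof of Theorem~\ref{thm:0.05} ``established below'' --- does not match what the paper actually does. The proof in Section~\ref{s04} is \emph{not} a direct Witten-deformation analysis carried out family-wise. Instead it proceeds by reduction: it invokes the embedding formula for equivariant eta forms \emph{with perturbation} (Proposition~\ref{prop:3.01}, proved in \cite{Embed}) as a black box, and then uses the comparison formulas of Section~\ref{s02} (Propositions~\ref{prop:1.03} and \ref{prop:1.05}, and Lemma~\ref{lemma:2.05}) to transfer that result from perturbed eta forms to reduced Bismut--Cheeger eta forms, absorbing the discrepancy into an equivariant higher-spectral-flow term. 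Thus the analytic heavy lifting you describe (localization of small eigenvalues near $Y$, harmonic-oscillator model in the normal directions, uniform heat-kernel bounds as $u\to\infty$) is imported from \cite{BZ93,Embed} rather than redone here. Your approach would in principle yield an independent, self-contained proof of the family statement, at the cost of redoing all of that analysis in the fibered equivariant setting; the paper's approach is much shorter precisely because it leans on the earlier literature and on the new comparison Propositions~\ref{prop:1.03}--\ref{prop:1.05}.
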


Remark that the vector bundles $\xi_{\pm}$ over $X$
are the Atiyah-Hirzebruch direct image \cite{AH59} of $\mu$
over $Y$ and the geometric triples $\ul{\xi_{\pm}}$ were constructed in \cite[(1.10)]{BZ93}. In \cite{BZ93}, Bismut and Zhang established this embedding formula as an equation mod $\Z$. 
In \cite[Theorem 4.1]{DZ00}, when $X$ and $Y$ are boundaries, the hidden mod $\Z$ term was explained as some APS index.
In \cite{Embed},
the author explained the mod $\Z$ term as a spectral flow
constructively in general case.



\

Now we introduce the adiabatic limit formula of eta invariants 
established by Bismut-Cheeger \cite{BC89} and Dai \cite{Dai91}.
Let $\pi:W\to B$ be a submersion of two closed manifolds
with fiber $X$.
Let $TX:=\ker (\pi_*:TW\to TB)$ be the relative tangent bundle.
Let $T^HW$ be a horizontal subbundle of $TW$ such that $TW=T^HW\oplus TX$.
Let $g^{TX}$ be a metric on $TX$. 
Let $\nabla^{TX}$ be the connection on $TX$
associated with $(T^HW, g^{TX})$ defined in (\ref{eq:1.01}).
Assume that $TX$ has a spin structure.
Let $\ul{E}$ be a geometric triple over $W$.
Let $D_X^E$ be the fiberwise Dirac operator (see (\ref{eq:1.02})).
Assume that $\ker D_X^E$ forms a vector bundle
over $B$. Under this assumption, the Bismut-Cheeger eta form
$\tilde{\eta}^{BC}
(\pi,E)\in \Omega^*(B)$ (non-equivariant version of Definition \ref{defn:1.01})
is well-defined. Moreover, by \cite[(0.6)]{BC89} and \cite[(0.2)]{Dai91},
\begin{align}\label{eq:0.06}
d\tilde{\eta}^{BC}(\pi,E)=
\left\{
\begin{aligned}
&\int_X\Ahat(TX,\nabla^{TX})\ch(E,\nabla^E)-
\ch(\ker D_X^E, \nabla^{\ker}),  &\hbox{if $\dim X$ is even;}
\\
&\int_X\Ahat(TX,\nabla^{TX})\ch(E,\nabla^E),
\  &\hbox{if $\dim X$ is odd.}
\end{aligned}
\right.
\end{align}
Here  $\nabla^{\ker}$ is the connection on the vector bundle $\ker D_X^E$ defined in (\ref{eq:1.03}).
Let $g^{TB}$ be a Riemannian metric on $TB$.
Assume that $B$ is spin. Then $W$ is also spin. 
For $t>0$, let $g_t^{TW}:=g^{TX}\oplus t^{-2}\pi^*g^{TB}$, which is a Riemannian metric on $TW$.
Let $D_{W,t}^E$ be the Dirac operator associated with $(g_t^{TW}, \nabla^E)$. The following theorem, called the adiabatic limit formula,
was established
by Bismut and Cheeger in \cite{BC89} when $D_X^E$ is invertible
and later extended to the case that $\ker D_X^E$ forms a vector bundle \cite{Dai91}. See also \cite[(54)]{Lo94} and \cite[Theorem 2.5]{Zh94} for some discussions and applications.


\begin{thm}\label{thm:0.03} \cite{BC89,Dai91}
If $\dim W$ is odd,
\begin{align}\label{eq:0.05}
\lim_{t\to 0}\bar{\eta}\left(D_{W,t}^E \right)
\equiv 
\left\{
\begin{aligned}
&\int_B \Ahat(TB, \nabla^{TB})\tilde{\eta}^{BC}(\pi,E)+\bar{\eta}\left(D_B^{\ker D_X^E} \right) \mod \Z,\  &\hbox{if $\dim X$ is even;}
\\
&\int_B \Ahat(TB, \nabla^{TB})\tilde{\eta}^{BC}(\pi,E) \mod \Z,
\  &\hbox{if $\dim X$ is odd.}
\end{aligned}
\right.
\end{align}
\end{thm}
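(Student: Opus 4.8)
The plan is to realize the adiabatic limit as a deformation of Dirac operators and to track the failure of $\mod\Z$ invariance of the reduced eta invariant via the spectral flow, exactly as in the variation formula (Theorem \ref{thm:0.01}). First I would set up the one-parameter family $\{D_{W,t}^E\}_{t\in(0,1]}$ and recall the fundamental principle that the reduced eta invariant is locally constant $\mod\Z$: along a smooth path of self-adjoint Dirac-type operators, $\bar\eta$ jumps by the spectral flow, which is an integer. Hence $\bar\eta(D_{W,1}^E)-\lim_{t\to 0}\bar\eta(D_{W,t}^E)\equiv 0\mod\Z$ provided the limit on the left exists; so it suffices to compute this limit. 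This reduces the theorem to an equality $\mod\Z$, which is why the statement is only asserted $\mod\Z$ — the integer ambiguity is precisely an (infinite-time) spectral flow contribution that we do not need to make explicit here.

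The core computation is the adiabatic limit of the eta invariant itself. Here I would invoke the scaling behavior of the rescaled Dirac operator: writing $W$ as a fibration over $B$ with fiber $X$, the metric $g_t^{TW}=g^{TX}\oplus t^{-2}\pi^*g^{TB}$ makes the base large as $t\to 0$, so $D_{W,t}^E$ decouples into a ``vertical'' operator (the fiberwise Dirac operator $D_X^E$) and, on its kernel, a ``horizontal'' operator (essentially $D_B^{\ker D_X^E}$). The analytic heart is a Mellin-transform/heat-kernel argument: one splits the defining integral $\eta(D_{W,t}^E)=\int_0^\infty \tr[D_{W,t}^E\exp(-s(D_{W,t}^E)^2)]\frac{ds}{\sqrt{\pi s}}$ into a small-$s$ and large-$s$ range, rescales $s\mapsto s/t^2$ or similar, and uses the local index theory of the Bismut superconnection to identify the fiberwise heat-kernel contribution. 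In the even-fiber case, the small-time limit of the appropriate supertrace produces $\int_X\Ahat(TX,\nabla^{TX})\ch(E,\nabla^E)$ wedged against the transgression forms building $\tilde\eta^{BC}(\pi,E)$, while the large-time part, governed by the small eigenvalues concentrated on $\ker D_X^E$, yields the genuinely spectral term $\bar\eta(D_B^{\ker D_X^E})$. The integral $\int_B\Ahat(TB,\nabla^{TB})\tilde\eta^{BC}(\pi,E)$ assembles from the base-direction $\Ahat$-form times the eta form; in the odd-fiber case $\ker D_X^E$ carries no reduced eta data of the right parity and only the eta-form term survives, consistent with (\ref{eq:0.06}).

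In practice I would not reprove the adiabatic limit from scratch but cite Bismut-Cheeger \cite{BC89} for the invertible case and Dai \cite{Dai91} for the extension to the case where $\ker D_X^E$ forms a vector bundle, since the hypothesis ``$\ker D_X^E$ forms a vector bundle'' is exactly Dai's setting; the only new input needed is the observation that the path $t\mapsto D_{W,t}^E$ is admissible for the $\mod\Z$ spectral-flow argument, so that the limit computed by Dai's formula agrees $\mod\Z$ with the value at $t=1$ and hence the stated congruence holds. The main obstacle, and the reason the result is delicate, is the behavior of the small (but nonzero) eigenvalues of $D_{W,t}^E$ as $t\to 0$: one must control the ``very small'' eigenvalues that collapse to zero — these are responsible for the $\bar\eta(D_B^{\ker D_X^E})$ term and also for the potential integer jumps — and show that after the rescaling they are governed by the induced operator $D_B^{\ker D_X^E}$ on the base with no further hidden contributions. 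This spectral-separation estimate (an eigenvalue gap of order $1$ between the ``large'' spectrum coming from $D_X^E\neq 0$ and the ``small'' spectrum of order $t$) is the technical crux; everything else is heat-kernel bookkeeping and the elementary $\mod\Z$ invariance of $\bar\eta$ along paths.
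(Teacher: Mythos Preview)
The paper does not give its own proof of Theorem~\ref{thm:0.03}; the statement is quoted in the introduction as background, with the proof attributed entirely to \cite{BC89} (invertible fiberwise operator) and \cite{Dai91} (the case where $\ker D_X^E$ is a bundle). Your proposal is therefore in complete agreement with the paper's treatment: you sketch the Bismut--Cheeger/Dai heat-kernel analysis and then say you would cite those references rather than redo the analysis, which is exactly what the paper does.

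One small comment on the logic of your first paragraph: the sentence ``$\bar\eta(D_{W,1}^E)-\lim_{t\to 0}\bar\eta(D_{W,t}^E)\equiv 0\bmod\Z$ \ldots\ so it suffices to compute this limit'' is not quite how the argument runs. The $\bmod\Z$ in the theorem is not coming from comparing the limit to the value at $t=1$; rather, Dai computes the limit itself and finds that it equals the right-hand side plus an explicit integer correction (his $\tau$-invariant, built from the Leray spectral sequence / very small eigenvalues). Spectral flow along the path $t\in(0,1]$ is a separate phenomenon and does not by itself produce the $\bmod\Z$ in the stated formula, since a priori infinitely many eigenvalues could cross zero as $t\to 0$. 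Your later paragraphs about the small-eigenvalue analysis and the spectral gap are the correct description of where the integer actually arises; the opening reduction step is superfluous and slightly misleading.
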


Remark that if $B$ is a point and $\dim X$ is odd, 
\begin{align}\label{eq:0.07}
\tilde{\eta}^{BC}(\pi,E)=\frac{1}{2}\eta(D_X^E).
\end{align}
Thus the Bismut-Cheeger eta form can be considered as the higher degree version of the eta invariant. 

\subsection{Reduced equivariant Bismut-Cheeger eta form}\label{s0002}

Let $G$ be a compact Lie group which acts on $W$ and $B$ such that $\pi\circ g=\pi$ for any $g\in G$. Then the $G$-action on $B$ is trivial. We assume that the $G$-action preserves the splitting $TW=T^HW\oplus TX$ and the spin structure of $TX$. We assume that $E$ is $G$-equivariant and
$g^{TX}, h^E, \nabla^E$ are $G$-invariant.
Then the $G$-action commutes with $D_X^E$.
 
Assume that $\ker D_X^E$ forms a vector
	 bundle over $B$.
Under this assumption, 
the equivariant Bismut-Cheeger eta form $\tilde{\eta}^{BC}_{g}(\pi,E)\in \Omega^*(B)$ is well-defined for any $g\in G$ (Definition \ref{defn:1.01}). For the unity $e\in G$, $\tilde{\eta}^{BC}_e(\pi,E)=\tilde{\eta}^{BC}(\pi,E)$. 
Moreover
by \cite[(1.3)]{Liu17} (cf. also \cite[(0.2)]{Dai91}),
\begin{align}\label{eq:0.08}
d\tilde{\eta}^{BC}_g(\pi,E)=
\left\{
\begin{aligned}
&\int_{X^g}\Ahat_g(TX,\nabla^{TX})\ch_g(E,\nabla^E)-
\ch_g(\ker D_X^E, \nabla^{\ker}),  &\hbox{if $\dim X$ is even;}
\\
&\int_{X^g}\Ahat_g(TX,\nabla^{TX})\ch_g(E,\nabla^E),
\  &\hbox{if $\dim X$ is odd.}
\end{aligned}
\right.
\end{align}
 Here $X^g$ is the fixed point set of $g\in G$ on $X$, $\Ahat_g(\cdot)$ and $\ch_g(\cdot)$ are the equivariant $\Ahat$-form and the equivariant Chern
character form respectively (see e.g., \cite[Definition 1.3]{LiuMa00}, \cite[(2.44), (2,45)]{Liu17} for the definitions).

As in (\ref{eq:0.02}), we define the reduced equivariant eta form
$\tilde{\bar{\eta}}^{BC}_g(\pi, E)$ by
\begin{align}\label{eq:0.10}
\tilde{\bar{\eta}}^{BC}_g(\pi, E)=
\left\{
\begin{aligned}
&\tilde{\eta}^{BC}_g(\pi, E)+\frac{1}{2}\ch_g(\ker D_X^E, \nabla^{\ker})  &\hbox{if $\dim X$ is odd;}
\\
&\tilde{\eta}^{BC}_g(\pi, E),
\  &\hbox{if $\dim X$ is even.}
\end{aligned}
\right.
\end{align}
When $B$ is a point and $\dim X$ is odd, as in (\ref{eq:0.07}), 
we have
\begin{align}\label{eq:0.13}
\tilde{\bar{\eta}}^{BC}_e(\pi, E)=\bar{\eta}(D_X^E).
\end{align}
So the reduced equivariant Bismut-Cheeger eta form is the equivariant higher degree version of the reduced eta invariant.

In this paper, we will generalize Theorems \ref{thm:0.01}-\ref{thm:0.03} for reduced eta invariants to reduced equivariant Bismut-Cheeger eta forms. 
For the analogue of these results for holomorphic torsion 
forms, see \cite[Appendix]{Ta21}.

For the generalization, the main difficulties are
 concentrated on 
the spectral flow terms. In order to generalize these terms, we need to replace the spectral flow terms by the equivariant higher spectral flow terms. Unfortunately, until now the equivariant higher spectral flow is only well-defined when the family index vanishes as an element of 
the equivariant $K$-group of the base space (see \cite[Definition 1.5]{DZ98} and \cite[Definitions 3.7, 3.8]{Liu21}). In general, 
the assumption that the 
kernel of the fiberwise Dirac operator forms a
 vector bundle
cannot guarantee that the family index vanishes. 

In this paper, we use a trick to overcome this difficulty by
adding new vector bundles to make the equivariant family index vanish. 
In this case, the equivariant eta form with perturbation
is well-defined (Definition \ref{defn:1.02}).
Then we establish a comparison formula between it and the reduced equivariant
eta form (\ref{eq:0.10}).
And
the generalizations follow from the results for equivariant eta
forms with perturbation in \cite{Liu21} and the analysis of the
higher spectral flow. This trick is inspired
by the perturbation techniques in \cite{FL10,Lo94}.

Although we use the spin condition here,
all results in this paper can be naturally extended to the 
equivariant Clifford module case.


In the following subsections, we will describe our results in 
more details.

\subsection{Variation formula}\label{s0003}

Let $\pi:W\to B$ be an equivariant submersion of two closed 
$G$-manifolds with closed fiber $X$ such that 
$G$ acts on $B$ trivially and the relative tangent
bundle $TX$ has an equivariant spin structure.
Let $T_0^HW$ and $T_1^HW$ be two equivariant horizontal subbundles.
Let $g_0^{TX}$ and $g^{TX}_1$ be two $G$-invariant Riemannian metrics on $TX$.
Let $\nabla_0^{TX}$ and $\nabla_1^{TX}$ be connections in (\ref{eq:1.01}) associated with $(T_0^HW, g_0^{TX})$ and 
$(T_1^HW, g_1^{TX})$.
Let $\ul{E_i}=(E,h_i^E, \nabla_i^E)$, $i=0,1$, be two equivariant 
versions of geometrc triples over $W$. 
Let $D_{X,0}^E$ and $D_{X,1}^E$ be two fiberwise Dirac operators
associated with $(g_0^{TX}, \nabla_0^{E})$ and $(g_1^{TX}, \nabla_1^{E})$ respectively. Assume that $\ker(D_{X,0}^E)$ and $\ker(D_{X,1}^E)$ form vector bundles over $B$.
Let $\tilde{\bar{\eta}}^{BC}_{g,0}(\pi,E)$ and $\tilde{\bar{\eta}}^{BC}_{g,1}(\pi,E)$ be corresponding reduced equivariant eta forms in (\ref{eq:0.10}). 
Let $K_G^*(B)$ be the equivariant topological $K$-group of $B$.
Let $\ch_g:K_G^*(B)\to H^*(B,\C)$ be the equivariant Chern 
character map (see (\ref{eq:1.07}) for the definition of $\ch_g$ on 
$K_G^1(B)$) for $g\in G$.

\begin{thm}\label{thm:0.04} 
	There exists $x\in K_G^*(B)$, such that modulo exact forms on $B$, for $g\in G$,
	\begin{multline}\label{eq:0.09}
\tilde{\bar{\eta}}^{BC}_{g,1}(\pi,E)-
	\tilde{\bar{\eta}}^{BC}_{g,0}(\pi,E)
	=\int_{X^g}\wi{\Ahat}_g\left(TX, \nabla_0^{TX}, \nabla_1^{TX} \right)\ch_g\left(E, \nabla_0^{E}\right)
	\\
	+\int_{X^g}\Ahat_g\left(TX, \nabla_1^{TX}\right)\wi{\ch}_g\left(E,\nabla_0^{E},\nabla_1^{E}\right)
	+\ch_g(x).
	\end{multline}
 Here  $\wi{\Ahat}_g(\cdot)$, $\wi{\ch}_g(\cdot)$ are equivariant
 Chern-Simons forms associated with equivariant $\Ahat$-forms, 
 equivariant Chern
 character forms respectively, which are the 
 natural equivariant versions of \cite[Definition B.5.3]{MM07}.
\end{thm}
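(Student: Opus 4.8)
The plan is to reduce (\ref{eq:0.09}) to the case where the equivariant family index of the fiberwise Dirac operator vanishes, where one may invoke the results of \cite{Liu21} for equivariant eta forms with perturbation together with a comparison formula relating those to the reduced eta forms (\ref{eq:0.10}), and then to discard the auxiliary data.

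\textbf{Step 1: reduction to vanishing family index.} I would join $(T_0^HW,g_0^{TX},\nabla_0^E)$ to $(T_1^HW,g_1^{TX},\nabla_1^E)$ by a smooth path of equivariant geometric data $(T_u^HW,g_u^{TX},\nabla_u^E)$, $u\in[0,1]$, with associated path of fiberwise Dirac operators $D_{X,u}^E$. By homotopy invariance of the equivariant family index, $a:=\Ind(D_{X,u}^E)\in K_G^*(B)$ is independent of $u$. One then adjoins a fixed auxiliary equivariant geometric fibration $\pi':W'\to B$ (closed fiber $X'$ of the same parity as $X$, equivariant spin structure on $TX'$, $G$ acting trivially on $B$, with fixed equivariant horizontal bundle, metric and geometric triple $\ul{E'}$) such that $\ker D_{X'}^{E'}$ is an equivariant vector bundle and $\Ind(D_{X'}^{E'})=-a$; after a further stabilization by trivial bundles this can always be arranged. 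Passing to the disjoint union $\pi\sqcup\pi':W\sqcup W'\to B$ with fiber $X\sqcup X'$ and bundle $E\oplus E'$, the fiberwise operator $D_{X,u}^E\oplus D_{X'}^{E'}$ has vanishing family index for every $u$, and at $u=0,1$ its kernel is the equivariant vector bundle $\ker D_{X,i}^E\oplus\ker D_{X'}^{E'}$. Since reduced equivariant eta forms and the right-hand side of (\ref{eq:0.09}) are additive under such direct sums, and since $\wi{\Ahat}_g$ and $\wi{\ch}_g$ contribute nothing on $\pi'$ (its geometric data being unchanged, so the relevant connections coincide), establishing (\ref{eq:0.09}) for $\ul{E}$ over $\pi$ is equivalent to establishing it for $E\oplus E'$ over $\pi\sqcup\pi'$. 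Thus we may assume $a=0$.

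\textbf{Step 2: comparison with the perturbed eta form, and conclusion.} When $a=0$ there are equivariant families $A_0,A_1$ of fiberwise smoothing operators making $D_{X,i}^E+A_i$ invertible, so the equivariant eta forms with perturbation $\tilde{\eta}^{BC}_{g,A_i}(\pi,E)$ of Definition \ref{defn:1.02} are defined. The new input is a comparison formula: modulo exact forms on $B$,
\begin{align*}
\tilde{\bar{\eta}}^{BC,i}_g(\pi,E)-\tilde{\eta}^{BC}_{g,A_i}(\pi,E)=\ch_g(y_i),
\end{align*}
where $y_i\in K_G^*(B)$ is built from the equivariant Dai-Zhang higher spectral flow of the path $\{D_{X,i}^E+sA_i\}_{s\in[0,1]}$ and from $\ker D_{X,i}^E$; it is proved by a Dai-Zhang type heat-kernel deformation in $s$, localized to $X^g$ exactly as in (\ref{eq:0.08}). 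On the other hand, the variation formula of \cite{Liu21} for equivariant eta forms with perturbation reads, modulo exact forms,
\begin{multline*}
\tilde{\eta}^{BC}_{g,A_1}(\pi,E)-\tilde{\eta}^{BC}_{g,A_0}(\pi,E)
=\int_{X^g}\wi{\Ahat}_g\left(TX,\nabla_0^{TX},\nabla_1^{TX}\right)\ch_g\left(E,\nabla_0^{E}\right)\\
+\int_{X^g}\Ahat_g\left(TX,\nabla_1^{TX}\right)\wi{\ch}_g\left(E,\nabla_0^{E},\nabla_1^{E}\right)+\ch_g(z),
\end{multline*}
with $z\in K_G^*(B)$ the equivariant higher spectral flow of a path from $D_{X,0}^E+A_0$ to $D_{X,1}^E+A_1$. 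Adding the two identities and setting $x:=z+y_1-y_0$ (augmented, in the odd-fiber case, by $\tfrac12([\ker D_{X,1}^E]-[\ker D_{X,0}^E])\in K_G^0(B)$, which also arises from higher spectral flow data) yields (\ref{eq:0.09}) for $\pi\sqcup\pi'$, hence for $\ul E$.

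\textbf{Main obstacle.} The crux is the comparison formula of Step 2: one must control how the heat operator and the rescaled Bismut superconnection Chern character form behave as the smoothing perturbation is switched on, in the presence of a nonzero kernel bundle and of the $G$-action, and match the resulting defect precisely with the equivariant Chern character of a Dai-Zhang higher spectral flow class -- so that after Step 1 all the leftover terms assemble into $\ch_g$ of a genuine element $x\in K_G^*(B)$ rather than merely a closed form. Ensuring the auxiliary fibration in Step 1 exists with the stated properties (index $-a$, kernel an equivariant vector bundle) is the remaining technical point.
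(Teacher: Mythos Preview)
Your strategy is essentially the paper's: kill the index obstruction by enlarging the fibration, compare reduced Bismut--Cheeger eta forms with perturbed eta forms, and then invoke the variation formula for the latter (Proposition~\ref{prop:2.01}). Two refinements the paper makes are worth noting. First, in the odd-fiber case the hypothesis that $\ker D_{X,i}^E$ is a bundle already forces $\Ind(D_{X,i}^E)=0\in K_G^1(B)$, since $D_{X,i}^E+P^{\ker,i}$ is invertible; so your Step~1 is unnecessary there, and the paper simply takes $A_i=P^{\ker,i}$, for which Proposition~\ref{prop:1.03} gives the comparison $y_i=0$ exactly. Second, in the even-fiber case the paper does not search for an abstract auxiliary fibration realizing $-a$: it uses the trivial one $B\times\{\mathrm{pt}\}\to B$ with fiber a single point, carrying the bundle $(\ker D_X^E)^{op}$ (or, to handle both endpoints at once, the bundle $L^{op}$ produced by \cite[Lemma~7.13]{FL10} over $B\times[0,1]$). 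This makes existence trivial and again furnishes an explicit perturbation for which the comparison is exact (Proposition~\ref{prop:1.05}, Lemma~\ref{lemma:2.03}); the residual Chern--Simons terms in Theorem~\ref{thm:2.04} then represent classes in $\ch_g(K_G^1(B))$.

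One point in your write-up is not right as stated: the quantity $\tfrac12\bigl([\ker D_{X,1}^E]-[\ker D_{X,0}^E]\bigr)$ is not an element of $K_G^0(B)$, so it cannot be part of $x$. The factor $\tfrac12\ch_g(\ker D_{X,i}^E)$ is already built into the \emph{reduced} eta form by (\ref{eq:0.10}); with the specific choice $A_i=P^{\ker,i}$ the comparison of Proposition~\ref{prop:1.03} absorbs it exactly, and the only correction left is the genuine higher spectral flow $\mathrm{sf}_G\{D_{X,0}^E+P^{\ker,0},\,D_{X,1}^E+P^{\ker,1}\}\in K_G^0(B)$ (Theorem~\ref{thm:2.02}). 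If you insist on generic perturbations $A_i$, the half-integer contribution must be shown to cancel against the difference of the $y_i$, not added separately.
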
 

Remark that if $B$ is a point, $\dim X$ is odd and $G=\{e\}$, Theorem \ref{thm:0.04}
degenerates to Theorem \ref{thm:0.01}.
If $G=\{ e\}$, Theorem \ref{thm:0.04} follows directly
from the family APS index theorem \cite{MP97a,MP97b}
for the submersion $W\times [0,1]\to B$ with fiber $X\times [0,1]$.
In this case, the element $x\in K^*(B)$ in (\ref{eq:0.09}) 
is a family APS index. 
For the equivariant case,
 in \cite{Liu17}, the author established this formula
under the assumption that there exists a smooth path connecting
the geometric data $(T_0^HW, g_0^{TX}, h_0^E, \nabla_0^E)$ and $(T_1^HW, g_1^{TX}, h_1^E, \nabla_1^E)$ such that there is no 
higher spectral flow along this path. 
Here we remove this additional assumption.
In \cite[Theorem 1.2]{Liu21}, the author established the variation 
formula for the equivariant eta forms with perturbation
(see Proposition \ref{prop:2.01}). 
For variation formula of eta forms for other settings,
see e.g., \cite{BM04,Bu09,Lo94,GL18}.

\subsection{Embedding formula}\label{s0004} 
Let $i:W\rightarrow V$ be an equivariant embedding of
two closed $G$-manifolds with even codimension. Let 
$\pi_V:V\rightarrow B$ be an equivariant submersion with closed
fiber $X$, whose restriction $\pi_W:W\rightarrow B$ is also an equivariant 
submersion with closed fiber $Y$. We assume that $G$ acts on $B$ 
trivially. We have the diagram of fibrations:

\begin{center}
	\begin{tikzpicture}[>=angle 90]
	\matrix(a)[matrix of math nodes,
	row sep=2em, column sep=2.5em,
	text height=1.5ex, text depth=0.25ex]
	{&Y&W\\
		&X&V&B.\\};
	\path[->](a-1-2) edge node[left]{\footnotesize{$i$}} (a-2-2);
	\path[->](a-1-2) edge (a-1-3);
	\path[->](a-2-2) edge (a-2-3);
	\path[->](a-1-3) edge node[left]{\footnotesize{$i$}} (a-2-3);
	\path[->](a-2-3) edge node[above]{\footnotesize{$\pi_V$}} (a-2-4);
	\path[->](a-1-3) edge node[above]{\footnotesize{$\pi_W$}} (a-2-4);
	\end{tikzpicture}
	\end{center}

Let $T^HV$ be an equivariant horizontal subbundle
over $V$. 
Assume that $T^HV|_W\subset TW$.
Set $T^HW:=T^HV|_W$. Then $T^HW$ is an equivariant horizontal
subbundle over $W$.
 Let $g^{TX}$ 
be an equivariant metric on $TX$ and $g^{TY}$ be the induced metric
on $TY$.
We assume that $TY$ and $TX$ have equivariant spin structures.
Let $N$ be the normal bundle of $TY$ in $TX$. Let $\nabla^{TY}$
and
$\nabla^{TX}$ be connections in (\ref{eq:1.01}) associated with
$(T^HW, g^{TY})$ and $(T^HV, g^{TX})$ respectively. Let $\nabla^N$ be the 
connection on $N$ induced from $\nabla^{TX}$.
Set $\nabla^{TY,N}:=\nabla^{TY}\oplus \nabla^{N}$.

Let $\ul{\mu}=(\mu, h^{\mu}, \nabla^{\mu})$ be an equivariant
 geometric triple over 
$W$. As in Theorem \ref{thm:0.02}, by \cite[\S 3.3]{Embed}
(see also \cite[(1.10)]{BZ93}, \cite[\S 1.4]{LiuMa20}), we can construct the 
equivariant version of the Atiyah-Hirzebruch direct image: the  equivariant geometric triples
$\ul{\xi_{\pm}}=(\xi_{\pm}, h^{\xi_{\pm}},\nabla^{\xi_{\pm}})$
over $V$ satisfying the equivariant metric condition \cite[(3.13)]{Embed}. Assume that $\ker D_X^{\xi_+}$, $\ker D_X^{\xi_-}$
and $\ker D_Y^{\mu}$ form vector bundles over $B$.

\begin{thm}\label{thm:0.05}\footnote{For Clifford module
		case, Theorem \ref{thm:0.05} needs an additional assumption 
		that $N$ has an equivariant 
		spin$^c$ structure as in \cite{Embed}.}
There exists $x\in K_G^*(B)$ such that modulo exact forms on $B$,
for $g\in G$,
	\begin{multline}\label{eq:0.11}
	\tilde{\bar{\eta}}^{BC}_g(\pi_V,\xi_+)-\tilde{\bar{\eta}}^{BC}_g(\pi_V, \xi_-)
	=\tilde{\bar{\eta}}^{BC}_g(\pi_W,\mu)+\int_{X^g}\Ahat_g(TX, \nabla^{TX})\gamma_g(Y^g,X^g)
	\\
	+\int_{Y^g}\wi{\Ahat}_g(TY, \nabla^{TY,N},\nabla^{TX|_{W^g}})
	\Ahat^{-1}_g(N,\nabla^N)\ch_g(E,\nabla^E)+\ch_g(x).
	\end{multline}
	Here $\gamma_g(Y^g,X^g)$ is the equivariant Bismut-Zhang current defined in \cite{Embed}.
\end{thm}

When $B$ is a point, $\dim X$ is odd and $G=\{e\}$, Theorem \ref{thm:0.05}
degenerates to Theorem \ref{thm:0.02}. Remark that in \cite{Embed}, the author obtained the equivariant version of Theorem \ref{thm:0.02} and generalized it to the equivariant eta forms with perturbation.

\subsection{Functoriality}\label{s0005}
Let $W$, $V$, $B$ be closed $G$-manifolds. Let $\pi_1: W\rightarrow V$, $\pi_2: V\rightarrow B$ be equivariant submersions with closed 
fibers $X$, $Y$. Then $\pi_3=\pi_2\circ \pi_1: W\rightarrow B$ is an equivariant submersion with closed fiber $Z$.
Assume that $G$ acts on $B$ trivially.
We have the diagram of fibrations:

\begin{center}\label{i3}
	\begin{tikzpicture}[>=angle 90]
	\matrix(a)[matrix of math nodes,
	row sep=2em, column sep=2.5em,
	text height=1.5ex, text depth=0.25ex]
	{X&Z&W\\
		&Y&V&B.\\};
	\path[->](a-1-1) edge (a-1-2);
	\path[->](a-1-2) edge node[left]{$\pi_1$} (a-2-2);
	\path[->](a-1-2) edge (a-1-3);
	\path[->](a-2-2) edge (a-2-3);
	\path[->](a-1-3) edge node[left]{$\footnotesize{\pi_1}$} (a-2-3);
	\path[->](a-2-3) edge node[above]{$\footnotesize{\pi_2\ \ }$} (a-2-4);
	\path[->](a-1-3) edge node[above]{$\footnotesize{\ \pi_3}$} (a-2-4);
	\end{tikzpicture}
\end{center}

Assume that relative tangent bundles $TX$ and $TY$ have equivariant
spin structures. So is the relative tangent bundle $TZ\simeq
\pi_1^*TY\oplus TX$. Let $(T_1^HW, g^{TX})$, $(T_2^HV, g^{TY})$
and $(T_3^HW, g^{TZ})$ be equivariant geometric data with respect to
$\pi_1$, $\pi_2$ and $\pi_3$ as in Section \ref{s0002}.
Let $\nabla^{TX}$, $\nabla^{TY}$ and $\nabla^{TZ}$ be the 
corresponding connections on $TX$, $TY$ and $TZ$ as in 
(\ref{eq:1.01}). Set $\nabla^{TY,TX}:=\pi_1^*\nabla^{TY}
\oplus \nabla^{TX}$.
Let $\ul{E}=(E,h^E, \nabla^E)$ be an equivariant geometric triple over $W$. 

Let $D_X^E$ and $D_Z^E$ be fiberwise Dirac operators
associated with $(g^{TX}, \nabla^E)$ and $(g^{TZ}, \nabla^E)$. Assume that $\ker D_X^E$ (resp. $\ker D_Z^E$) forms 
a vector bundle over $V$ (resp. $B$). Let $\nabla^{\ker}$ be the 
induced $G$-invariant connection on the vector bundle $\ker D_X^E$
as in (\ref{eq:1.03}).
Let $D_Y^{\ker D_X^E}$ be the fiberwise Dirac operator twisted with 
the vector bundle $\ker D_X^E$ over $V$ associated with
$(g^{TY}, \nabla^{\ker})$ such that $\ker D_Y^{\ker D_X^E}$ forms 
	a vector bundle over $B$.

\begin{thm}\label{thm:0.06}
	There exists $x\in K_G^*(B)$, such that
	modulo exact forms on $B$, for $g\in G$, if $\dim X$ is even,
	\begin{multline}\label{eq:0.12}
	\tilde{\bar{\eta}}^{BC}_g(\pi_3, E)=
	\tilde{\bar{\eta}}^{BC}_g(\pi_2, \ker D_X^E)
	+\int_{Z^g}\widetilde{\widehat{\mathrm{A}}}_g(TZ,\nabla^{TY,TX},\nabla^{TZ}  )
	\, \ch_g(E, \nabla^{E})
	\\
	+\int_{Y^g}\widehat{\mathrm{A}}_g(TY, \nabla^{TY})\, \tilde{\bar{\eta}}^{BC}_g(\pi_1, E)+\ch_g(x);
	\end{multline}
if $\dim X$ is odd,
	\begin{multline}\label{eq:0.14}
\tilde{\bar{\eta}}^{BC}_g(\pi_3, E)=
\int_{Z^g}\widetilde{\widehat{\mathrm{A}}}_g(TZ,\nabla^{TY,TX},\nabla^{TZ}  )
\, \ch_g(E, \nabla^{E})
\\
+\int_{Y^g}\widehat{\mathrm{A}}_g(TY, \nabla^{TY})\, \tilde{\bar{\eta}}^{BC}_g(\pi_1, E)+\ch_g(x).
\end{multline}
\end{thm}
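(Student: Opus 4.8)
Mirroring the proofs of Theorems~\ref{thm:0.04} and~\ref{thm:0.05}, the plan is to transport the identity into the setting of equivariant eta forms with perturbation, where a functoriality formula is available from \cite{Liu21}, and then to pay for the passage back and forth with explicit $\ch_g$-terms. The equivariant eta form with perturbation of Definition~\ref{defn:1.02} requires the vanishing of the corresponding equivariant family index, which the kernel-bundle hypotheses alone do not supply. However, by functoriality of the equivariant analytic index, once $\ker D_X^E$ forms a bundle over $V$ one has $\mathrm{ind}(D_Z^E)=\mathrm{ind}(D_Y^{\ker D_X^E})=\pi_{2,*}[\ker D_X^E]$ in $K_G^*(B)$, so it suffices to make the single class $[\ker D_X^E]\in K_G^*(V)$ vanish. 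Using Bott periodicity I would realise $-[\ker D_X^E]$ as the equivariant family index of an auxiliary fibration built from products $V\times S^2$ (and $V\times S^1$ when $\dim X$ is odd) twisted by Bott-type equivariant geometric triples, chosen so that the resulting kernel bundle is the $K$-theoretically trivial $\mathbb{Z}_2$-graded bundle $\ker D_X^E\oplus\overline{\ker D_X^E}$, so that all three kernel-bundle hypotheses persist after the enlargement, and so that the auxiliary geometry is a product (hence its Chern--Simons term vanishes). Adding this auxiliary fibration changes each of the three reduced eta forms in (\ref{eq:0.12})/(\ref{eq:0.14}) only by the eta form of an explicit product family, which enters both sides and, being $\ch_g$ of a $K_G$-class modulo exact forms, is ultimately absorbed into $x$. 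From here on I may therefore assume $\mathrm{ind}(D_X^E)=0$ over $V$, hence $\mathrm{ind}(D_Y^{\ker D_X^E})=\mathrm{ind}(D_Z^E)=0$ over $B$.

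Next, fix fibrewise perturbations $A_1$ for $\pi_1$, $A_2$ for $\pi_2$ twisted by $\ker D_X^E$, and $A_3$ for $\pi_3$, and form the corresponding equivariant eta forms with perturbation $\tilde{\eta}_g(\pi_1,E,A_1)$, $\tilde{\eta}_g(\pi_2,\ker D_X^E,A_2)$, $\tilde{\eta}_g(\pi_3,E,A_3)$ of Definition~\ref{defn:1.02}. Since both the kernel-bundle and the index-zero conditions now hold, the comparison between the reduced equivariant Bismut--Cheeger eta form and the equivariant eta form with perturbation established in this paper gives, modulo exact forms,
\begin{align*}
\tilde{\bar{\eta}}^{BC}_g(\pi_1,E)&=\tilde{\eta}_g(\pi_1,E,A_1)+\ch_g(y_1),\\
\tilde{\bar{\eta}}^{BC}_g(\pi_2,\ker D_X^E)&=\tilde{\eta}_g(\pi_2,\ker D_X^E,A_2)+\ch_g(y_2),\\
\tilde{\bar{\eta}}^{BC}_g(\pi_3,E)&=\tilde{\eta}_g(\pi_3,E,A_3)+\ch_g(y_3),
\end{align*}
with $y_1\in K_G^*(V)$ and $y_2,y_3\in K_G^*(B)$ built explicitly from equivariant higher spectral flows along the linear paths from $0$ to $A_i$ and from equivariant Chern--Simons forms. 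Then I would invoke the functoriality formula for equivariant eta forms with perturbation \cite{Liu21}: it gives (\ref{eq:0.12})/(\ref{eq:0.14}) with every $\tilde{\bar{\eta}}^{BC}_g$ replaced by the corresponding $\tilde{\eta}_g(\cdot,A_\bullet)$ and with the last term replaced by $\ch_g(z)$ for some $z\in K_G^*(B)$ arising from the adiabatic-limit analysis, while the two geometric terms $\int_{Z^g}\widetilde{\widehat{\mathrm{A}}}_g(TZ,{}^0\nabla^{TZ},\nabla^{TZ})\,\ch_g(E,\nabla^{E})$ and $\int_{Y^g}\widehat{\mathrm{A}}_g(TY,\nabla^{TY})\,\tilde{\eta}_g(\pi_1,E,A_1)$ occur there verbatim.

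Finally I would substitute the three comparison identities into this perturbed formula. On the right-hand side $\int_{Y^g}\widehat{\mathrm{A}}_g(TY,\nabla^{TY})\,\tilde{\eta}_g(\pi_1,E,A_1)=\int_{Y^g}\widehat{\mathrm{A}}_g(TY,\nabla^{TY})\,\tilde{\bar{\eta}}^{BC}_g(\pi_1,E)-\int_{Y^g}\widehat{\mathrm{A}}_g(TY,\nabla^{TY})\,\ch_g(y_1)$, and by the equivariant Riemann--Roch theorem for the equivariant spin fibration $\pi_2$ the last integral equals $\ch_g(\pi_{2,!}y_1)$ with $\pi_{2,!}y_1\in K_G^*(B)$; likewise $\tilde{\eta}_g(\pi_2,\ker D_X^E,A_2)=\tilde{\bar{\eta}}^{BC}_g(\pi_2,\ker D_X^E)-\ch_g(y_2)$, and on the left the term becomes $\tilde{\bar{\eta}}^{BC}_g(\pi_3,E)-\ch_g(y_3)$. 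The $\dim X$ even and $\dim X$ odd cases are handled in parallel; the absence of the $\tilde{\bar{\eta}}^{BC}_g(\pi_2,\ker D_X^E)$ summand in (\ref{eq:0.14}) is then a direct consequence of the definition (\ref{eq:0.10}) of the reduced eta form in the odd case, which absorbs the kernel bundle into $\tilde{\bar{\eta}}^{BC}_g(\pi_1,E)$. Collecting all these correction classes, together with the auxiliary class from the first step, into a single class $x\in K_G^*(B)$ (additivity of $\ch_g$), and using that the Chern--Simons term $\int_{Z^g}\widetilde{\widehat{\mathrm{A}}}_g(TZ,{}^0\nabla^{TZ},\nabla^{TZ})\,\ch_g(E,\nabla^{E})$ is literally the same in both formulas, yields (\ref{eq:0.12})/(\ref{eq:0.14}) modulo exact forms on $B$.

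The main obstacle is the first step: realising $-[\ker D_X^E]$ by genuine equivariant geometric data while simultaneously keeping all three kernel-bundle hypotheses intact and keeping the auxiliary contribution computable. Since $\pi_{1,*}\colon K_G^*(W)\to K_G^*(V)$ need not be surjective, one cannot kill the index by twisting $E$ alone and is forced to enlarge the fibre; one must then check that the enlarged data genuinely fit into a composition diagram of the same shape $\pi_3=\pi_2\circ\pi_1$, so that \cite{Liu21} applies, and that the eta form of the auxiliary family contributes to both sides of (\ref{eq:0.12})/(\ref{eq:0.14}) only through $\ch_g$ of classes in $K_G^*(B)$. A secondary point, running through the last two steps, is to check that each correction is $\ch_g$ of a bona fide element of $K_G^*(B)$ rather than merely a closed form, as the statement asserts; this rests on naturality of the higher-spectral-flow classes under pushforward and on equivariant Riemann--Roch.
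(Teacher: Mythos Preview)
Your approach has a genuine gap in the even-$\dim X$ case. The functoriality formula for perturbed eta forms from \cite{Liu21} (quoted here as Proposition~\ref{D08}) reads
\[
\widetilde{\eta}_g(\pi_3,\mA_Z)=\int_{Y^g}\widehat{\mathrm{A}}_g(TY,\nabla^{TY})\,\widetilde{\eta}_g(\pi_1,\mA_X)+\int_{Z^g}\widetilde{\widehat{\mathrm{A}}}_g(TZ,{}^0\nabla^{TZ},\nabla^{TZ})\,\ch_g(E,\nabla^{E})+\ch_g(z),
\]
with \emph{no} $\pi_2$-term, regardless of parity. You assume it already contains a summand $\widetilde{\eta}_g(\pi_2,\ker D_X^E,A_2)$ when $\dim X$ is even, but it does not; substituting your three comparison identities into it therefore yields only the shape of (\ref{eq:0.14}), never (\ref{eq:0.12}). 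Since $\tilde{\bar{\eta}}^{BC}_g(\pi_2,\ker D_X^E)$ is a secondary invariant and not in general $\ch_g$ of a $K_G$-class, this discrepancy cannot be absorbed into $x$.

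The paper obtains the $\pi_2$-term not from \cite{Liu21} but from the specific way it kills the index. Rather than an $S^2$- or $S^1$-auxiliary, it enlarges $\pi_1$ to the disjoint union $\pi_1':W\sqcup V\to V$, the added copy of $V$ carrying a point fiber and the bundle $(\ker D_X^E)^{op}$. The Bismut--Cheeger eta form over a point fiber vanishes, so $\widetilde{\eta}^{BC}_g(\pi_1')=\widetilde{\eta}^{BC}_g(\pi_1)$; but after composing with $\pi_2$ the enlarged total fibration is literally $\pi_3\sqcup\pi_2^{-}:W\sqcup V\to B$, and its Bismut--Cheeger eta form is $\widetilde{\eta}^{BC}_g(\pi_3,E)-\widetilde{\eta}^{BC}_g(\pi_2,\ker D_X^E)$ by additivity. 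That is precisely where the $\pi_2$-summand in (\ref{eq:0.12}) originates. Your $S^2$-auxiliary would instead contribute the eta form of a $Y\times S^2$-fibration over $B$, and identifying that with $-\tilde{\bar{\eta}}^{BC}_g(\pi_2,\ker D_X^E)$ up to $\ch_g$ would itself require the functoriality you are proving. For $\dim X$ odd your outline is essentially the paper's (no enlargement is needed: $P^{\ker D_X}$ is already a perturbation operator), but your stated reason for the absence of the $\pi_2$-term---that (\ref{eq:0.10}) ``absorbs'' it---is incorrect; it is absent simply because Proposition~\ref{D08} has none.
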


Remark that when $B$ is a point, $G=\{e\}$, Theorem \ref{thm:0.06}
is a generalization of Theorem \ref{thm:0.03}. In fact, let 
$\nabla_t^{TZ}$ be the Levi-Civita connection associated with
$g^{TX}\oplus t^{-2}\pi_1^*g^{TY}$. Then by \cite[Proposition 4.5]{Liu17}
(cf. also \cite[(4.32)]{Ma99}), $\displaystyle\lim_{t\to 0}\widetilde{\widehat{\mathrm{A}}}_g(TZ,\nabla^{TY,TX},\nabla_t^{TZ}  )=0$. The hidden 
mod $\Z$ term in 
(\ref{eq:0.05}) is explained here as a spectral flow. 

In \cite{Liu17}, we established this formula under the assumptions that $\ker D_Y^{\ker D_X^E}=0$ and there is no higher spectral flow in all deformations. Here we remove these assumptions. 
Notice that if $\dim X$ is odd, (\ref{eq:0.14}) seems different
from \cite[Theorem 1.3]{Liu17}. But they are the same under the assumptions in \cite[Theorem 1.3]{Liu17}. The reason is that 
if $\dim X$ is odd, the term $\tilde{\eta}_g(T_2^HV,g^{TY},h^{L_Y},
h^{\ker D^X},\nabla^{L_Y},\nabla^{\ker D^X})$ in \cite[(1.6)]{Liu17} vanishes by counting 
degrees on both sides.
In \cite{Liu21}, we obtain the functoriality for equivariant eta forms with perturbation. See also \cite{BB94,Ma99,Ma00,BM04,BS09,BS13,GL18} for other settings. 

Note that when the fiberwise Dirac operator $D$ is a fibrewise 
signature operator, $\ker D$ naturally forms a vector bundle.
In this case, the mod $\Z$ term in (\ref{eq:0.05}) was 
constructed by spectral sequences in \cite{Dai91}.
When the formulas are extended to the equivariant family 
case, the spectral sequence terms should be generalized
to the equivariant Chern characters of some vector bundles
arising from the study of spectral sequence as in \cite{Ma00,BM04}.

Naturally, the functoriality can be extended to the multifibration
case in \cite[Appendix 2]{BC89}.

\subsection{The organization of the article}\label{s0006}

This article  is organized as follows. In Section \ref{s02},
we recall the definition of the equivariant Bismut-Cheeger eta form and
compare it with the
equivariant eta form with perturbation. In Sections \ref{s03}-\ref{s05},
we prove Theorems \ref{thm:0.04}-\ref{thm:0.06} using the 
comparison formulas in Section \ref{s02}.

\

\textbf{Notation}.
All manifolds in this paper are smooth and without boundary.
We denote by $d$ the exterior differential operator and 
$d^B$ when we like to insist the base manifold $B$.

We use the superconnection formalism of Quillen \cite{Qu85}.
If $A$ is a $\Z_2$-graded algebra, and if $a,b\in A$, 
then we will note $[a,b]:=ab-(-1)^{\deg(a)\deg(b)}ba$ as the supercommutator 
of $a, b$.
If $E, E'$ are two $\Z_2$-graded spaces,
we will note $E\widehat{\otimes}E'$ as the $\Z_2$-graded 
tensor product as in \cite[\S 1.3]{BGV04}.
If one of $E, E'$ is ungraded, we understand 
it as $\Z_2$-graded by taking its odd part as zero. 

For the fiber bundle $\pi: W\rightarrow B$, 
we use the sign convention for the integration of  the differential forms along the oriented fibers $Z$ as follows:
for $\alpha\in \Omega^{\bullet}(B)$ and 
$\beta\in \Omega^{\bullet}(W)$, 
\begin{align}\label{e01136}
\int_Z(\pi^*\alpha)\wedge\beta=\alpha\wedge \int_Z\beta.
\end{align}


\settocdepth{subsection}
\section{Bismut-Cheeger Eta forms}\label{s02}

In this section, we recall the definitions of the equivariant
Bismut-Cheeger eta form and the equivariant version of the
 eta form with 
perturbation, which was originally introduced in \cite{MP97a,MP97b}, 
and  study the relations between them.




Let $G$ be a compact Lie group.
Let $\pi:W\to B$ be an equivariant submersion of two compact 
$G$-manifolds with compact fiber $X$ such that 
$G$ acts on $B$ trivially.

Let $TX:=\ker(\pi_*:TW\to TB)$ be the relative tangent bundle.
Then $TW$ and $TX$ are equivariant vector bundles over $W$.
Let $T^HW\subset TW$ be an 
equivariant horizontal subbundle, such that
\begin{align}\label{eq:1.00}
TW=T^HW\oplus TX.
\end{align}
Since $G$ is compact, such $T^HW$ always exists. Let $P^{TX}:TW
\rightarrow TX$ be the projection associated with (\ref{eq:1.00}). Note that
$
T^HW\cong \pi^*TB.
$

Let $g^{TX}$, $g^{TB}$ be $G$-invariant  metrics on $TX$,
$TB$. 
We equip $TW=T^HW\oplus TX$ with the 
$G$-invariant metric $
g^{TW}=\pi^*g^{TB}\oplus g^{TX}.
$
Let $\nabla^{TW}$ be the Levi-Civita 
connection on $(TW,g^{TW})$. 
Let $\nabla^{TX}$ be the connection on $TX$ defined by
\begin{align}\label{eq:1.01}
\nabla^{TX}=P^{TX}\nabla^{TW}P^{TX}.
\end{align}
It is a $G$-invariant Euclidean connection on $TX$ which depends only on 
$(T^HW, g^{TX})$ (cf. \cite[Theorem 1.9]{Bi86}).
Let $\nabla^{TB}$ be the Levi-Civita connection on $(TB, g^{TB})$.
Let $\nabla^{TB,TX}$ be the connection on $TW$ defined by
\begin{align}\label{eq:1.11}
\nabla^{TB,TX}=\pi^*\nabla^{TB}\oplus \nabla^{TX},
\end{align}
which is also $G$-invariant.

Let $C(TX)$ be the Clifford algebra bundle of $(TX, g^{TX})$, whose fiber at $w\in W$ is the Clifford algebra
$C(T_wX)$ of the Euclidean space $(T_wX, g^{T_wX})$.
We assume that $TX$ has a $G$-equivariant spin structure.
Let $\mathcal{S}_X$ be the spinor bundle over $W$ for 
$(TX, g^{TX})$, which has a smooth action of $C(TX)$ and is 
$G$-equivariant. Let $\nabla^{\mathcal{S}_X}$ be the
$G$-invariant connection
on $\mathcal{S}_X$ induced by $\nabla^{TX}$.
If $\dim X$ is even, $\mathcal{S}_X$ is naturally $\Z_2$-graded
and $\nabla^{\mathcal{S}_X}$ preserves this $\Z_2$-grading.

Let $E$ be an equivariant complex vector bundle over $W$.
Let $h^{E}$ be a $G$-invariant Hermitian metric on $E$. 
Let $\nabla^{E}$ be a $G$-invariant Hermitian connection 
on $(E,h^E)$. 
As in Section \ref{s0001}, we say $\ul{E}=(E,h^E,\nabla^E)$
is an equivariant geometric triple over $W$.
Set
$
\nabla^{\mathcal{S}_X\otimes E}:=\nabla^{\mathcal{S}_X}\otimes 1+1\otimes \nabla^E.
$
Then $\nabla^{\mathcal{S}_X\otimes E}$ is a $G$-invariant Hermitian connection on $(\mathcal{S}_X\otimes E, h^{\mathcal{S}_X}\otimes h^E)$.

Let $\{e_i \}_{i=1}^{\dim X}$ be a local orthonormal frame of $TX$.
The fiberwise Dirac operator is defined by
\begin{align}\label{eq:1.02}
D_X^E=\sum_{i=1}^{\dim X}c(e_{i})\nabla^{\mathcal{S}_X\otimes E}_{e_i},
\end{align}
which is independent of the choice of the local orthonormal frame.
If $\dim X$ is even, the fiberwise Dirac operator
$D_X^E=D_+\oplus D_-$ with respect to the $\Z_2$-grading.

For $b\in B$, let $\mathcal{E}_{b}$ be the set of smooth 
sections over $X_b=\pi^{-1}(b)$ of 
$\mathcal{S}_X\otimes E|_{X_b}$. As in \cite{Bi86},
we will regard $\mathcal{E}$ as
an infinite dimensional vector bundle over $B$.
If $\dim X$ is even, then $\mathcal{E}$ is $\Z_2$-graded.

Let $\nabla^{\mathcal{E}}$ be the connection on $\mathcal{E}$
defined in \cite[(1.7)]{BF86}, which preserves the $L^2$ inner
product on $\mathcal{E}$.
If $U\in TB$, let $U^H\in T^HW$ be its horizontal lift in $T^HW$ so that $\pi_*U^H=U$.
Let $\{f_p \}$ be a local orthonormal frame of $TB$.
We denote by
$
c(T)=-\frac{1}{2}\,c\left(P^{TX}[f_p^H, f_q^H]\right)f^p\wedge f^q\wedge.
$
Let $\mathbb{B}_u$ be the rescaled Bismut superconnection defined by (see e.g., \cite[p336]{BGV04})
\begin{align}\label{eq:1.12}
\mathbb{B}_u=\sqrt{u}D_X^E+\nabla^{\mathcal{E}}-\frac{c(T)}{4\sqrt{u}},
\quad u>0.
\end{align}
Note that $\mathbb{B}_u^2$ is a 2nd-order 
elliptic differential operator along the fibers \cite[Theorem 3.5]{Bi86}.

We assume that $\ker D_X^E$ forms a vector bundle over $B$. Then the $L^2$ inner product on $\mathcal{E}$ induces 
a $G$-invariant metric on $\ker D_X^E$.
Let $P^{\ker}:\mathcal{E}\to \ker D_X^E$ be the orthogonal 
projection with respect to the $L^2$ inner product on $\mathcal{E}$. Let
\begin{align}\label{eq:1.03}
\nabla^{\ker}:=P^{\ker}\circ \nabla^{\mathcal{E}}\circ P^{\ker}.
\end{align}
It is a $G$-invariant Hermitian connection on $\ker D_X^E$.

We define the supertrace $\tr_s$ on a trace class operator
on $\mathrm{End}(\mathcal{E})$ as in \cite[\S 1.3]{BGV04}.
If $P$ is a trace class operator acting on $\Lambda(T^*B)\widehat{\otimes}\mathrm{End}(\mathcal{E})$ which takes values in $\Lambda(T^*B)$,
we use the convention that if $\omega\in \Lambda(T^*B)$,
\begin{align}\label{e01056}
\tr[\omega P]=\omega\tr[P],\quad
\tr_s[\omega P]=\omega\tr_s[P].
\end{align}
We denote by $\tr^{\mathrm{odd/even}}[P]$ the part of $\tr[P]$
which takes values in odd or even forms. Set
\begin{align}\label{i16}
\widetilde{\tr}[P]=
\left\{
\begin{array}{ll}
\tr_s[P], & \hbox{if $\dim X$ is even;} \\
\tr^{\mathrm{odd}}[P], & \hbox{if $\dim X$ is odd.}
\end{array}
\right.
\end{align}

For $\alpha\in \Omega^j(B)$, set
\begin{align}\label{1.04}
\psi_B(\alpha)=\left\{
\begin{array}{ll}
\left(\frac{1}{2\pi \sqrt{-1}}\right)^{\frac{j}{2}}\cdot \alpha, & \hbox{if $j$ is even;} \\
\frac{1}{\sqrt{\pi}}\left(\frac{1}{2\pi \sqrt{-1}}\right)^{\frac{j-1}{2}}\cdot \alpha, & \hbox{if $j$ is odd.}
\end{array}
\right.
\end{align}

For $\beta\in \Omega^*(B\times [0,1]_u)$,
if
$\beta=\beta_0+du\wedge \beta_1$, with $\beta_0, \beta_1\in 
\Lambda (T^*B)$, set
\begin{align}
[\beta]^{du}:=\beta_1.
\end{align}

The following definition is the equivariant version of the 
Bismut-Cheeger eta form in \cite{BC89}.

\begin{defn}\label{defn:1.01} \cite[Definition 2.3]{Liu17}
For $g\in G$, the equivariant Bismut-Cheeger eta form is defined by
\begin{multline}\label{eq:1.05}
\tilde{\eta}^{BC}_g(\pi,E):=
-\int_0^{\infty}\left\{\psi_{\R\times B}\left.\widetilde{\tr}\right.\left[g\exp\left(-\left(
\mathbb{B}_{u}+du\wedge\frac{\partial}{\partial u}\right)^2\right)\right]\right\}^{du}du
\\
=
\left\{
\begin{aligned}
& \int_0^{\infty}\left.\frac{1}{\sqrt{\pi}}\psi_{B}\tr^{\mathrm{even}}\right.\left[g\left.\frac{\partial \mathbb{B}_u}{\partial u}\right.
\exp(-\mathbb{B}_u^{2})\right] du \in \Omega^{\mathrm{even}}(B,\C),
\\
& \quad\quad\quad\quad\quad\quad\quad\quad\quad\quad\quad\quad\quad\quad\quad\quad\quad\quad\quad
\quad\hbox{if $\dim X$ is odd;} \\
&\int_0^{\infty} \left.\frac{1}{2\sqrt{\pi}\sqrt{-1}}\psi_{B}\tr_s\right.\left[g\left.\frac{\partial \mathbb{B}_u}{\partial u}\right.
\exp(-\mathbb{B}_u^{2})\right] du
\in \Omega^{\mathrm{odd}}(B,\C),
\\
& \quad\quad\quad\quad\quad\quad\quad\quad\quad\quad\quad\quad\quad\quad\quad\quad\quad
\quad\quad\quad\hbox{if $\dim X$ is even.} \\
\end{aligned}
\right.
\end{multline}	
\end{defn}

Note that the convergence of the integrals in (\ref{eq:1.05}) are highly nontrivial \cite[(2.72), (2.77)]{Liu17} (cf. also \cite[Theorem 10.32]{BGV04}).
Remark that by changing the variable (see also \cite[Remark 2.4]{Liu17}), 
\begin{align}\label{i18}
\tilde{\eta}^{BC}_g(\pi,E)
=
-\int_0^{\infty}\left\{\psi_{\R \times B}\left.\widetilde{\tr}\right.\left[g\exp\left(-\left(
\mathbb{B}_{u^2}+du\wedge\frac{\partial}{\partial u}\right)^2\right)\right]\right\}^{du}du.
\end{align}

We define the reduced equivariant eta form 
$\tilde{\bar{\eta}}^{BC}_g(\pi, E)$ as in (\ref{eq:0.10}).
If $E=E_+\oplus E_-$ is $\Z_2$-graded, we define
$\tilde{\bar{\eta}}^{BC}_g(\pi, E)=\tilde{\bar{\eta}}^{BC}_g(\pi, E_+)-\tilde{\bar{\eta}}^{BC}_g(\pi, E_-)$.

\

Now we recall the definition of the equivariant eta form with perturbation in \cite{Liu21}.

Let $K_G^0(B)$ be the equivariant topological $K^0$-group of $B$. Fix $s\in S^1$. Let $\iota:B\rightarrow B\times S^1$,
$b\mapsto (b,s)$, be the $G$-equivariant 
inclusion map, where we suppose that the $G$-action on $S^1$ is trivial. From
\cite[Definitions 2.7, 2.8]{Se68},
\begin{align}\label{eq:1.06}
K_G^1(B)\simeq \ker\left(\iota^*:K_G^0(B\times S^1)\rightarrow 
K_G^0(B)\right).
\end{align}
Recall that $G$ acts on $B$ trivially.
For $x\in K_G^0(B)$, $g\in G$, the classical equivariant Chern character map 
sends $x$ to $\ch_g(x)\in H^{\mathrm{even}}(B,\C)$.
By (\ref{eq:1.06}), for $x\in K_G^1(B)$, we can regard $x$ as an element $x'$ 
in $K_G^0(B\times S^1)$. The odd equivariant Chern character map
\begin{align}\label{eq:1.07}
\ch_g:K_G^1(B)\longrightarrow H^{\mathrm{odd}}(B,\C)
\end{align}
is defined by
$
\ch_g(x):=\int_{S^1}\ch_g(x').$

If $\dim X$ is even (resp. odd), the equivariant (analytic)
index $\ind(D_X^E)\in K_G^0(B)$ (resp. $K_G^1(B)$).
If $\ind(D_X^E)=0\in K_G^*(B)$, by \cite[Proposition 3.3 (i)]{Liu21}
(cf. also \cite[Proposition 1]{MP97a} and \cite[Proposition 2]{MP97b}),
there exists a smooth family of equivariant
bounded pseudodifferential operators $\mA$ such that $(D_X^E+\mA)|_{X_b}$ is invertible 
for any $b\in B$. If $\dim X$ is even, $\mA$ is additionally
required to anti-commute with the $\Z_2$-grading of the 
spinor $\mathcal{S}_X$\footnote{If $E=E_+\oplus E_-$ is
	 $\Z_2$-graded, $\mA$ is required to anti-commute with 
	 the $\Z_2$-grading of $\mathcal{S}_X\widehat{\otimes}E$
	when $\dim X$ is even and commute with the $\Z_2$-grading
	of $\mathcal{S}_X\widehat{\otimes}E$ when $\dim X$
	is odd.}. Such operator $\mA$ is called a perturbation operator.

Let $\chi\in \cC^{\infty}(\R)$ be a cut-off function such that
\begin{align}\label{eq:1.08}
\chi(u)=
\left\{
\begin{aligned}
&0,  &\hbox{if $u<1$;} \\
&1,  &\hbox{if $u>2$.}
\end{aligned}
\right.
\end{align}
Set
\begin{align}\label{eq:1.09}
\mathbb{B}_u'=\mathbb{B}_u+\sqrt{u}\chi(\sqrt{u})\mA.
\end{align}
The following definition is the equivariant version of the eta
form with perturbation in \cite{MP97a,MP97b,DZ98}.

\begin{defn}\label{defn:1.02} \cite[Definition 3.12]{Liu21}
Modulo exact forms on $B$, the equivariant eta form with perturbation operator $\mA$ is defined by
\begin{multline}\label{eq:1.10}
\tilde{\eta}_g(\pi,\mA):=
-\int_0^{\infty}\left\{\psi_{\R\times B}\left.\widetilde{\tr}\right.\left[g\exp\left(-\left(
\mathbb{B}_{u}'+du\wedge\frac{\partial}{\partial u}\right)^2\right)\right]\right\}^{du}du
\\
=
\left\{
\begin{aligned}
& \int_0^{\infty}\left.\frac{1}{\sqrt{\pi}}\psi_{B}\tr^{\mathrm{even}}\right.\left[g\left.\frac{\partial \mathbb{B}_u'}{\partial u}\right.
\exp(-(\mathbb{B}_u')^{2})\right] du,
\quad\hbox{if $\dim X$ is odd;} \\
&\int_0^{\infty} \left.\frac{1}{2\sqrt{\pi}\sqrt{-1}}\psi_{B}\tr_s\right.\left[g\left.\frac{\partial \mathbb{B}_u'}{\partial u}\right.
\exp(-(\mathbb{B}_u')^{2})\right] du,
\quad\hbox{if $\dim X$ is even.} \\
\end{aligned}
\right.
\end{multline}	 
\end{defn}

Moreover, as in (\ref{eq:0.08}) (see e.g., \cite[(3.66)]{Liu21}),
\begin{align}\label{eq:1.24}
d\tilde{\eta}_g(\pi,\mA)=\int_{X^g}\Ahat_g(TX,\nabla^{TX})\ch_g(E,\nabla^E).
\end{align}
As in (\ref{i18}), we have
\begin{align}\label{eq:1.25}
\tilde{\eta}_g(\pi,\mA)
=
-\int_0^{\infty}\left\{\psi_{\R \times B}\left.\widetilde{\tr}\right.\left[g\exp\left(-\left(
\mathbb{B}_{u^2}'+du\wedge\frac{\partial}{\partial u}\right)^2\right)\right]\right\}^{du}du.
\end{align}

The proof of the well-definedness of 
$\tilde{\eta}_g(\pi,\mA)$ is the same as 
that of $\tilde{\eta}_g^{BC}(\pi,E)$. 
Remark that if we choose another cut-off function, the difference
of the new eta form and the original one is an exact form.
So modulo exact forms, the definition of $\tilde{\eta}_g(\pi,\mA)$ is independent of the choice of the cut-off function in 
(\ref{eq:1.08}).

\

Now we discuss the relations between these two types of 
equivariant eta forms when $\ker D_X^E$ forms a
vector bundle over $B$.

\

\textbf{Case 1}: $\dim X$ is odd.

\

Since $\ker D_X^E$ forms a vector bundle,
 $\ker (D_X^E+P^{\ker})=0$. So from \cite[Proposition 3.3]{Liu21}
 (cf. also \cite[Proposition 1]{MP97a}),
we have $\ind (D_X^E)=0\in K_G^1(B)$
and $P^{\ker }$ is a perturbation operator.

\begin{prop}\label{prop:1.03}
	If $\dim X$ is odd, we have
	\begin{align}\label{eq:1.13}
	\tilde{\eta}_g(\pi,P^{\ker})=\tilde{\bar{\eta}}^{BC}_g(\pi, E) \mod d\Omega^*(B).
	\end{align}
\end{prop}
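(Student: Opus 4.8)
The plan is to interpolate between the two superconnections $\mathbb{B}_u'$ (with perturbation operator $\mA = P^{\ker}$) and $\mathbb{B}_u$ (whose "eta integrand" produces $\tilde{\eta}^{BC}_g(\pi,E)$), and to track the transgression that appears. First I would recall that both eta forms are defined (modulo exact forms on $B$) as $-\int_0^\infty\{\psi_{\R\times B}\widetilde{\tr}[g\exp(-(\cdot + du\wedge\partial_u)^2)]\}^{du}\,du$, where in one case $\cdot = \mathbb{B}_u'$ and in the other $\cdot = \mathbb{B}_u$. Since $\mathbb{B}_u' = \mathbb{B}_u + \sqrt{u}\chi(\sqrt{u})P^{\ker}$, the natural move is to introduce a two-parameter family $\mathbb{B}_{u,s} = \mathbb{B}_u + s\sqrt{u}\chi(\sqrt{u})P^{\ker}$ for $s\in[0,1]$ and apply the standard transgression/Duhamel argument: the $s$-derivative of the eta integrand is, up to an exact form on $B$, the $s$-derivative of a local quantity, so that $\tilde{\eta}_g(\pi,P^{\ker}) - \tilde{\eta}^{BC,\text{naive}}_g(\pi,E)$ equals a boundary term coming from the $u\to\infty$ and $u\to 0$ limits of the heat kernel supertrace together with the explicit contribution of the extra term $P^{\ker}$.

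The key steps, in order, are: (i) Write both eta forms using the rescaled form (\ref{i18})/(\ref{eq:1.25}), so that the cut-off $\chi(\sqrt{u})$ becomes effectively $\chi(u)$ after change of variables, and note that for small $u$ the perturbation is switched off, hence the $u\to 0$ asymptotics of the two integrands agree and contribute nothing to the difference. (ii) For the $u\to\infty$ end, use that $\ker D_X^E$ forms an equivariant vector bundle: the heat operator $\exp(-\mathbb{B}_u^2)$ localizes, as $u\to\infty$, onto the finite-dimensional bundle $\ker D_X^E$ with the connection $\nabla^{\ker}$ of (\ref{eq:1.03}), while $\exp(-(\mathbb{B}_u')^2)$ with $\mA = P^{\ker}$ kills this kernel (since $D_X^E + P^{\ker}$ is invertible) and decays. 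This is exactly where the $\tfrac12\ch_g(\ker D_X^E,\nabla^{\ker})$ correction term in the definition (\ref{eq:0.10}) of $\tilde{\bar{\eta}}^{BC}_g$ comes from: performing the $s$-integral and the residual $u$-integral on the rank-$\ker$ part produces $\int_0^\infty \tfrac{d}{du}(\text{something}) = [\cdots]_0^\infty$ which evaluates, via the standard Mehler/Gaussian computation for the operator $\sqrt{u}\,P^{\ker}$ acting on a bundle with curvature, to precisely $\tfrac12\ch_g(\ker D_X^E,\nabla^{\ker})$. (iii) Assemble: $\tilde{\eta}_g(\pi,P^{\ker}) = \tilde{\eta}^{BC}_g(\pi,E) + \tfrac12\ch_g(\ker D_X^E,\nabla^{\ker}) \bmod d\Omega^*(B)$, and the right-hand side is by definition $\tilde{\bar{\eta}}^{BC}_g(\pi,E)$.

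The main obstacle I expect is making the $u\to\infty$ limit rigorous and identifying the boundary contribution precisely. One must justify that the large-time behaviour of $\widetilde{\tr}[g\,\partial_u\mathbb{B}_{u,s}\exp(-\mathbb{B}_{u,s}^2)]$ is governed by the finite-dimensional model on $\ker D_X^E$ uniformly in $s$ — this uses the spectral gap coming from $D_X^E$ being invertible on the orthogonal complement of its kernel, plus the standard estimates (as in \cite{BC89,Liu17,Liu21}) that let one replace the infinite-dimensional superconnection heat kernel by its finite-dimensional counterpart with exponentially small error. Once that reduction is in place, the computation of the residual integral is the classical one-variable calculation (for $\dim X$ odd, a $\Z_2$-graded-free Gaussian integral) yielding the factor $\tfrac12$; the interchange of $\int_0^\infty du$ with $\partial_s$ and with $d^B$ is justified exactly as in the proof of well-definedness of the eta forms cited after Definition \ref{defn:1.02}. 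I would also remark that the independence of $\tilde{\eta}_g(\pi,P^{\ker})$ (mod exact forms) from the choice of cut-off function, already noted in the text, is what allows the change-of-variable in step (i) to be harmless.
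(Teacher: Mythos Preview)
Your proposal is correct and follows essentially the same route as the paper's proof: both introduce the two-parameter family $\mathbb{B}_{u,s}=\mathbb{B}_u+s\sqrt{u}\chi(\sqrt{u})P^{\ker}$, use the transgression identity $\partial_s\gamma=\partial_u r_1+d^Br_2$ to reduce the difference of eta forms to a $u\to\infty$ boundary term (the $u\to 0$ term vanishing by the cut-off), and then compute that boundary term by reducing to the finite-rank bundle $\ker D_X^E$ via the spectral-gap/block-decomposition argument of \cite[Theorem 5.13]{Liu17}, arriving at the Gaussian integral $\lim_{u\to\infty}\int_0^1 ue^{-u^2s^2}\,ds=\tfrac12$ that produces $\tfrac12\ch_g(\ker D_X^E,\nabla^{\ker})$. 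The paper makes the last step slightly more explicit by writing out the $E',F',G',H'$ block form and the identity $E'-F'H'^{-1}G'=u^2s^2P^{\ker}+u\,ds\wedge+(\nabla^{\ker})^2$, but this is exactly the ``finite-dimensional model'' you invoke.
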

\begin{proof}
	Let $\widetilde{\mathbb{B}}_{u}$ be the Bismut superconnection with respect to the fiber bundle
	$W\times [0,1]_s\to B\times[0,1]_s$ with fiber $X$
	such that restricted on $W\times \{s\}$, $s\in [0,1]$,
	$\widetilde{\mathbb{B}}_{u}|_{W\times \{s\}}=\mathbb{B}_u+ds\wedge \frac{\partial}{\partial s}$.
	Set
	\begin{align}\label{eq:1.14}
	\widetilde{\mathbb{B}}_{u^2}':=\widetilde{\mathbb{B}}_{u^2}
	+us\chi(u)P^{\ker}=u(D_X^E+s\chi(u)P^{\ker})+\nabla^{\mathcal{E}}+
	ds\wedge\frac{\partial}{\partial s}-\frac{c(T)}{4u}.
	\end{align}
	
	We decompose
	\begin{multline}\label{e01112}
	\psi_{\mathbb{R}^2\times B}\tr^{\mathrm{odd}}\left[g\exp\left(-\left(\widetilde{\mathbb{B}}_{u^2}'+du\wedge\frac{\partial}{\partial u}\right)^2\right)\right]
	\\
	=du\wedge\gamma(u,s)+ds\wedge r_1(u,s)+du\wedge ds\wedge r_2(u,s)+r_3(u,s),
	\end{multline}
	where $\gamma, r_1, r_2, r_3$ do not contain $du$ neither $ds$.
	From (\ref{i18}), (\ref{eq:1.25}) and (\ref{e01112}), we have
	\begin{align}\label{e01203}
	\tilde{\eta}_g^{BC}(\pi,E)=-\int_0^\infty \gamma(u,0)du, \quad
	\tilde{\eta}_g(\pi,P^{\ker})=-\int_0^\infty \gamma(u,1)du.
	\end{align}
	
	By \cite[Theorem 9.17]{BGV04}, 
	\begin{align}\label{eq:2.24}
	\left(du\wedge \frac{\partial}{\partial u}+ds\wedge \frac{\partial}{\partial s}+d^{B}\right)
	\tr\left[g\exp\left(-\left(\widetilde{\mathbb{B}}_{u^2}'+du\wedge\frac{\partial}{\partial u}\right)^2\right)\right]=0.
	\end{align}
	So from (\ref{e01112}) and (\ref{eq:2.24}),
	\begin{align}\label{e01113}
	\frac{\partial \gamma(u,s)}{\partial s}=\frac{\partial r_1(u,s)}{\partial u}+d^B r_2(u,s).
	\end{align}
	From (\ref{e01203}) and (\ref{e01113}),
	\begin{multline}\label{e01114}
	\tilde{\eta}_g(\pi,P^{\ker})-\tilde{\eta}_g^{BC}(\pi,E)
	=-\int_{0}^{+\infty}(\gamma(u,1)-\gamma(u,0))du
	\\
	=-\int_0^{+\infty}\int_0^1 \frac{\partial}{\partial s}\gamma(u,s)ds du=-\int_0^1\int_0^{+\infty} \frac{\partial}{\partial s}\gamma(u,s)du ds
	\\
	=-\int_0^1\int_0^{+\infty} \frac{\partial}{\partial u}r_1(u,s)du ds-d^B\int_0^1\int_0^{+\infty} r_2(u,s)du ds
	\\
	=\int_0^1 \left(r_1(0,s)-r_1(\infty,s)\right)ds-d^B\int_0^1\int_0^{+\infty} r_2(u,s)du ds.
	\end{multline}
	The commutative property of the integrals
	in the above formula is guaranteed by  \cite[(2.72), (2.77)]{Liu17} for $s\in [0,1]$.
	By (\ref{eq:1.08}), (\ref{e01112}) and the equivariant family local index theorem (see e.g., \cite[Theorem 1]{LiuMa00}, \cite[Theorem 2.2]{Liu17}),
	as in \cite[(2.96), (2.97)]{Liu17}, we have 
	\begin{align}\label{eq:1.26}
	r_1(0,s)=0,\quad r_1(\infty,s)=\frac{1}{\sqrt{\pi}}
	\lim_{u\to+\infty}\psi_B\left\{\tr^{\mathrm{odd}}
	\left[g\exp\left(-\left(	\widetilde{\mathbb{B}}_{u^2}'\right)^2 \right) 
	\right]\right\}^{ds}.
	\end{align}
Therefore, by (\ref{e01114}) and (\ref{eq:1.26}), modulo exact forms on $B$, we have
	\begin{align}\label{eq:1.15}
	\tilde{\eta}_g(\pi,P^{\ker})-\tilde{\eta}^{BC}_g(\pi,E)=-
	\frac{1}{\sqrt{\pi}}\lim_{u\to+\infty}
	\psi_B\int_0^1\left\{\tr^{\mathrm{odd}}\left[g\exp\left(-\left(	\widetilde{\mathbb{B}}_{u^2}'\right)^2 \right) \right]\right\}^{ds}ds.
	\end{align}
	
		For a family of bounded operators $\mA_u$, $u\in \R_+$, we write
	$\mA_u=O(u^{-k})$ as $u\to +\infty$ if there exists $C>0$
	such that if $u$ is large enough, the norm of $\mA_u$ is dominated by $Cu^{-k}$.
	 
	Let $P^{\ker,\bot}$ be the orthogonal projection on the
	orthogonal complement of $\ker D_X^E$. Set
	\begin{align}\label{eq:1.16}
	\begin{split}
	&E_u=P^{\ker}\circ	\left(	\widetilde{\mathbb{B}}_{u^2}'\right)^2\circ P^{\ker},\quad 
	\quad F_u=P^{\ker}\circ 	\left(	\widetilde{\mathbb{B}}_{u^2}'\right)^2\circ P^{\ker,\bot},\quad 
	\\
	&G_u=P^{\ker,\bot}\circ 	\left(	\widetilde{\mathbb{B}}_{u^2}'\right)^2\circ P^{\ker},\quad 
	\, H_u=P^{\ker,\bot}\circ	\left(	\widetilde{\mathbb{B}}_{u^2}'\right)^2\circ P^{\ker,\bot}.
	\end{split}
	\end{align}		
Note that
	\begin{align}\label{eq:1.17}
	P^{\ker}\{u(D_X^E+s\chi(u)P^{\ker}) \}P^{\ker}=us\chi(u)P^{\ker}.
	\end{align}	
	Denote by $\wi{\nabla}=\nabla^{\mathcal{E}}+ds\wedge\frac{\partial}{\partial s}$. Let 
	\begin{align}\label{eq:1.18}
	\begin{split}
	&E'= u^2s^2P^{\ker} +u ds\wedge P^{\ker}  +P^{\ker}(\nabla^{\mathcal{E}})^2 P^{\ker},
	\\
	&  F'=P^{\ker,\bot} [D_X^E+sP^{\ker}, \wi{\nabla}] P^{\ker},
	\\
	& G'=P^{\ker} [D_X^E+sP^{\ker}, \wi{\nabla}] P^{\ker,\bot},
	\\ 
	& H'=P^{\ker,\bot} (D_X^E+sP^{\ker})^2 P^{\ker,\bot}=P^{\ker,\bot} (D_X^E)^2 P^{\ker,\bot}.
	\end{split}
	\end{align}
	By (\ref{eq:1.14}),
	when $u\to +\infty$,
	\begin{align}\label{eq:1.19}
	\begin{split}
	&E_u=E'+O(u^{-1}),
	\quad\quad \quad  F_u=uF'+F''+O(1),
	\\
	& G_u=uG'+G''+O(1),\quad
	H_u=u^2H'+uH''+H'''+O(1),
	\end{split}
	\end{align}
	where $F'', G'', H'', H'''$ are 1st-order differential
	operators along the fiber.
	Moreover, by (\ref{eq:1.03}) and (\ref{eq:1.18}),
	we have
	\begin{align}\label{eq:1.20}
	E'-G'H'^{-1}F'=u^2s^2P^{\ker} + u ds\wedge+(\nabla^{\ker})^2+s^2\mathcal{C}+s\mathcal{D},
	\end{align}
	where
	\begin{align}
	\begin{split}
	&\mathcal{C}=P^{\ker}\wi{\nabla}P^{\ker,\bot}(P^{\ker,\bot}
	D_X^E P^{\ker,\bot})^{-2}P^{\ker,\bot}\wi{\nabla}P^{\ker},
	\\
	&\mathcal{D}=2P^{\ker}\wi{\nabla}P^{\ker,\bot}(P^{\ker,\bot}
	D_X^E P^{\ker,\bot})^{-1}P^{\ker,\bot}\wi{\nabla}P^{\ker}.
	\end{split}
	\end{align}
	Following the same way as the proof of \cite[Theorem 5.13]{Liu17}
	 (cf. also \cite[Theorem 9.19]{BGV04}), when $u\to +\infty$, we can obtain
	\begin{align}\label{eq:1.21}
	\exp\left(-	\left(	\widetilde{\mathbb{B}}_{u^2}'\right)^2\right)
	=P^{\ker}\circ\exp\big(-(E'-G'H'^{-1}F')\big)\circ P^{\ker}+O(u^{-1}).
	\end{align}
	
	So from (\ref{eq:1.15}), as in \cite[Theorem 5.15]{Liu17}, modulo exact forms,
	\begin{multline}\label{eq:1.23}
	\tilde{\eta}_g(\pi,P^{\ker})-\tilde{\eta}^{BC}_g(\pi,E)
	=\frac{1}{\sqrt{\pi}}\lim_{u\to+\infty}\int_0^1ue^{-u^2s^2} \psi_B\tr^{\mathrm{even}}[g\exp(-(\nabla^{\ker})^2-s^2\mathcal{C}-s\mathcal{D})]ds
	\\
	=\frac{1}{\sqrt{\pi}}\lim_{u\to+\infty}\int_0^{u}e^{-t^2} \psi_B\tr^{\mathrm{even}}[g\exp(-(\nabla^{\ker})^2-u^{-2}t^2\mathcal{C}-u^{-1}t\mathcal{D})]dt.
	\end{multline}
	Using the Volterra series (cf. e.g., \cite[(2.5)]{BGV04}),
	we have
	\begin{align}\label{eq:1.36}
	\tilde{\eta}_g(\pi,P^{\ker})-\tilde{\eta}^{BC}_g(\pi,E)
	=\frac{1}{\sqrt{\pi}}\int_0^{+\infty}e^{-t^2}dt\cdot\ch_g(\ker D_X^E, \nabla^{\ker})
	=\frac{1}{2}\ch_g(\ker D_X^E, \nabla^{\ker}).
	\end{align}
	
	By (\ref{eq:0.10}) and (\ref{eq:1.36}), the proof of Proposition \ref{prop:1.03} is completed.
\end{proof} 

Remark that we can also obtain this proposition using the method in \cite[Chapter 9]{BGV04} as in \cite[Proposition 17]{MP97a}.

\begin{rem}
	(a) Observe that from (\ref{eq:0.08}) and (\ref{eq:1.24}),
	\begin{align}\label{eq:1.35}
	d^B\tilde{\eta}_g(\pi,P^{\ker})=d^B\tilde{\bar{\eta}}^{BC}_g(\pi,E)=\int_{X^g}\widehat{\mathrm{A}}_g(TX,\nabla^{TX})\ch_g(E,\nabla^E),
	\end{align}
	which is compatible with this proposition.
	
	(b) If $B$ is a point, from \cite[Remark 3.20]{Liu21}, both sides of (\ref{eq:1.13})
	are equal to the reduced eta invariant.
\end{rem}

\

\textbf{Case 2}: $\dim X$ is even.

\

In this case, $\ker D_X^E=\ker D_+\oplus \ker D_-$ is a $\Z_2$-graded vector bundle.
But $P^{\ker}$ is not a perturbation operator because $P^{\ker}$
does not anti-commute with the $\Z_2$-grading of the spinor.
In general, $\ind(D_X^E)\neq 0\in K_G^0(B)$.


We consider a new equivariant fiber bundle $\pi':W\sqcup B\to B$ with fiber $X\sqcup \{\mathrm{pt}\}$, which is the disjoint union of the 
fiber bundle $W\to B$ and a new fiber bundle $B\times \{\mathrm{pt}\}\to B$ whose fiber is a point.
We denote by $(\ker D_+)_-$ and $(\ker D_-)_+$  the vector bundles $\ker D_+$ and $\ker D_-$ over $B\times\{\mathrm{pt} \}$
with the inverse grading.
Let $D^{H}$  be the Dirac operator on $H=\mathcal{E}\oplus (\ker D_+)_-
\oplus (\ker D_-)_+$
with respect to the fiber bundle $\pi'$.
Then $D^H=D_X^E\oplus 0\oplus 0$.
Note that
\begin{align}
\ind(D^H)=\ind D_X^E-\ind D_X^E=0\in K_G^0(B).
\end{align}

Let $\tilde{\eta}_g^{BC}(\pi',E\sqcup(\ker D_X^E)^{op})$ be the equivariant Bismut-Cheeger eta form associated with 
this fiber bundle $\pi'$. 
Here $(\ker D_X^E)^{op}$ denotes the $\Z_2$-graded vector bundle
$\ker D_X^E$ with the opposite grading.
Notice that if the fiber is a point, the corresponding
Bismut-Cheeger eta form vanishes. Thus we have
\begin{align}\label{eq:1.30}
\tilde{\eta}_g^{BC}(\pi',E\sqcup(\ker D_X^E)^{op})=\tilde{\eta}_g^{BC}(\pi,E).
\end{align} 

Let $V=\ker D_+\oplus \ker D_-\oplus(\ker D_+)_-\oplus (\ker D_-)_+$. 
Let $P^V:H\to V$ be the orthgonal projection on $V$.
Let 
\begin{align}\label{eq:1.38}
\mA=\left(
\begin{array}{cc}
0&1\\
1&0
\end{array}
\right)\circ P^V
\end{align}
on $\big(\ker D_+\oplus(\ker D_-)_+\big)\oplus 
\big((\ker D_+)_-\oplus \ker D_-\big)$.
Then $\mA$ anticommutes with the $\Z_2$-grading and $\mA^2=\Id$ on $V$.
Since 
\begin{align}
(D^H+\mA)^2=D^{H,2}+\mA^2=D^{H,2}+P^V,
\end{align}
we see that $D^H+\mA$ is invertible. 
So $\mA$ is a perturation operator associated with $D^H$.
Let $\tilde{\eta}_g^H(\pi',\mA)$ be the corresponding eta form with perturbation operator $\mA$.

\begin{prop}\label{prop:1.05}
If $\dim X$ is even, we have
	\begin{align}\label{e2}
	\tilde{\eta}_g^H(\pi',\mA)=\tilde{\bar{\eta}}_g^{BC}(\pi,E) 
	\mod d\Omega^*(B).
	\end{align}
\end{prop}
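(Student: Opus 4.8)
The plan is to follow the same strategy as in the proof of Proposition \ref{prop:1.03} (the odd case), adapted to the even-dimensional situation via the auxiliary fiber bundle $\pi'$. First I would introduce a two-parameter deformation: let $\widetilde{\mathbb{B}}_{u}$ be the Bismut superconnection for the fiber bundle $(W\sqcup B)\times [0,1]_s\to B\times [0,1]_s$ with fiber $X\sqcup\{\mathrm{pt}\}$, and set $\widetilde{\mathbb{B}}_{u^2}'=\widetilde{\mathbb{B}}_{u^2}+us\chi(u)\mathcal{A}$, so that at $s=0$ it computes $\tilde{\eta}_g^{BC}(\pi',E\sqcup(\ker D_X^E)^{op})=\tilde{\eta}_g^{BC}(\pi,E)$ by (\ref{eq:1.30}), and at $s=1$ it computes $\tilde{\eta}_g^H(\pi',\mathcal{A})$. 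Decomposing the integrand $\frac{1}{2\sqrt{\pi}\sqrt{-1}}\psi_B\tr_s[g\exp(-(\widetilde{\mathbb{B}}_{u^2}'+du\wedge\partial_u)^2)]$ as $du\wedge\gamma+ds\wedge r_1+du\wedge ds\wedge r_2+r_3$ and using the transgression identity from \cite[Theorem 9.17]{BGV04}, I get $\partial_s\gamma=\partial_u r_1+d^B r_2$, whence modulo exact forms
\begin{align}\label{eq:plan1}
\tilde{\eta}_g^H(\pi',\mathcal{A})-\tilde{\eta}_g^{BC}(\pi,E)=\int_0^1\big(r_1(0,s)-r_1(\infty,s)\big)ds-d^B\int_0^1\int_0^{+\infty}r_2\,du\,ds.
\end{align}
The commutativity of the iterated integrals and the uniform estimates needed to justify this are supplied by \cite[(2.72),(2.77)]{Liu17} applied to the product family.

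Next I would identify the boundary terms. The short-time term $r_1(0,s)=0$ as in \cite[(2.96)]{Liu17}, while the long-time limit $r_1(\infty,s)=\lim_{u\to+\infty}\psi_B\tr_s[g\exp(-(\widetilde{\mathbb{B}}_{u^2}')^2)]^{ds}$ is controlled by an adiabatic/finite-dimensional reduction onto $\ker D^H=\ker D_X^E$ (as a $\Z_2$-graded bundle over $B\times\{\mathrm{pt}\}$ included in $H$). Arguing exactly as in (\ref{eq:1.16})--(\ref{eq:1.22}) — splitting with $P^{\ker}$ and $P^{\ker,\bot}$, extracting the leading behaviour of the blocks, and invoking the reduction formula \cite[Theorem 5.13]{Liu17} (or the method of \cite[Chapter 9]{BGV04}) — the operator $\exp(-(\widetilde{\mathbb{B}}_{u^2}')^2)$ localizes to $\ker D_X^E$, where $\mathcal{A}$ acts as the off-diagonal involution (\ref{eq:1.38}) and the connection is $\nabla^{\ker}$. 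Taking the $ds$-component and the limit $u\to+\infty$ then produces, after the Gaussian integration $\lim_{u\to+\infty}\int_0^1 ue^{-u^2s^2}ds=\frac{1}{2}\sqrt{\pi}\cdot\frac{1}{\sqrt{\pi}}$ as in (\ref{eq:1.23}), a term proportional to $\ch_g$ of the $\Z_2$-graded kernel bundle with the extra grading factor from $(\ker D_X^E)^{op}$. The upshot should be
\begin{align}\label{eq:plan2}
\tilde{\eta}_g^H(\pi',\mathcal{A})-\tilde{\eta}_g^{BC}(\pi,E)=\tfrac{1}{2}\ch_g(\ker D_X^E,\nabla^{\ker})-\tfrac{1}{2}\ch_g(\ker D_X^E,\nabla^{\ker})=0 \mod d\Omega^*(B),
\end{align}
which is precisely (\ref{e2}) since when $\dim X$ is even $\tilde{\bar{\eta}}_g^{BC}(\pi,E)=\tilde{\eta}_g^{BC}(\pi,E)$ by (\ref{eq:0.10}).

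The main obstacle is the large-$u$ analysis of the $ds$-component of the heat kernel for the perturbed superconnection $\widetilde{\mathbb{B}}_{u^2}'$ in the even case. Unlike the odd case, the perturbation $\mathcal{A}$ does not merely shift the kernel projection by a scalar on $\ker D_X^E$: it is the off-diagonal involution on $\ker D_+\oplus(\ker D_-)_+$ versus $(\ker D_+)_-\oplus\ker D_-$, so the finite-dimensional model operator on $\ker D^H$ is $u^2 s^2\,\mathcal{A}^2 + \cdots = u^2 s^2 + \cdots$ but one must carefully track how the grading operator interacts with $\mathcal{A}$ and with $\nabla^{\ker}$ so that the supertrace picks out exactly the graded Chern character of $\ker D_X^E$ (and the opposite-grading copies contribute with the correct sign that makes the total vanish). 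I would handle this by writing the reduced $2\times 2$ block operator explicitly on $\ker D^H$, diagonalizing in the $\mathcal{A}$-eigenspaces, and verifying that the supertrace over $H$ of the localized heat operator equals $\tr_s$ over $\ker D_X^E$ of $e^{-u^2 s^2}e^{-(\nabla^{\ker})^2}$ up to $O(u^{-1})$; everything else then reduces to the same Gaussian-integral bookkeeping as in Proposition \ref{prop:1.03}. The rest of the estimates — convergence of the $u$-integral near $0$ and $\infty$, exchange of integration order, and the transgression identity — transfer verbatim from \cite{Liu17,Liu21}.
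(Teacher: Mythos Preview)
Your overall strategy is exactly that of the paper: deform along $s\in[0,1]$ with $\widetilde{\mathbb{B}}_{u^2}'^{H}=u(D^H+s\chi(u)\mA)+\nabla^H+ds\wedge\partial_s-c(T)/(4u)$, use the transgression identity to reduce to $r_1(0,s)-r_1(\infty,s)$, and compute the $u\to\infty$ limit by finite-dimensional reduction onto $\ker D^H$. Two points need correction, however. First, $\ker D^H$ is not $\ker D_X^E$: since $D^H=D_X^E\oplus 0\oplus 0$ on $H=\mathcal{E}\oplus(\ker D_+)_-\oplus(\ker D_-)_+$, its kernel is the full four-block bundle $V=\ker D_+\oplus\ker D_-\oplus(\ker D_+)_-\oplus(\ker D_-)_+$, and the reduction must be onto $V$ with the projection $P^V$, not $P^{\ker D_X^E}$.

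Second, and more importantly, your description of the vanishing mechanism in \eqref{eq:plan2} is not what actually happens. The $ds$-coefficient of $(\widetilde{\mathbb{B}}_{u^2}'^{H})^2$ carries an explicit factor of $u\chi(u)\mA$ (from $\partial_s$ hitting $us\chi(u)\mA$), so after localization the $ds$-component of the heat kernel is $-u\mA\,e^{-u^2s^2}\,P^V\exp\big(-(\nabla^{\ker}\oplus\nabla^{\ker})^2\big)P^V+O(u^{-1})$. The relevant quantity is therefore $\tr_s^V\big[g\,\mA\exp(-R^{\ker}\oplus R^{\ker})\big]$, and this vanishes for a purely algebraic reason: $\mA$ anticommutes with the $\Z_2$-grading of $V$ while $g$ and the curvature commute with it, so $\mA\exp(-R^{\ker}\oplus R^{\ker})$ is block off-diagonal and its supertrace is identically zero. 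There is no Gaussian integral to perform and no pair of Chern characters to cancel; your claim that the localized supertrace equals $\tr_s^{\ker D_X^E}[g\,e^{-u^2s^2}e^{-(\nabla^{\ker})^2}]$ is incorrect. Once you replace your endgame by this one-line observation, the proof goes through and matches the paper's.
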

\begin{proof}
	From (\ref{eq:0.10}) and (\ref{eq:1.30}), we only need to prove 
	$\tilde{\eta}_g^H(\pi',\mA)=\tilde{\eta}_g^{BC}(\pi',E\sqcup(\ker D_X^E)^{op})\mod
	 d\Omega^*(B)$.
	
	Let $\nabla^H=\nabla^{\mathcal{E}}\oplus \nabla^{\ker}$ on $H$.
	As in (\ref{eq:1.14}), we set
	\begin{align}
	\widetilde{\mathbb{B}}_{u^2}'^{H}=u(D^H+s\chi(u)\mA)+\nabla^H+
	ds\wedge\frac{\partial}{\partial s}-\frac{c(T)}{4u}
	\end{align}
	 with respect to the vector bundle
	$(W\sqcup B)\times [0,1]\to B\times[0,1]$ with fiber $X\sqcup\{\mathrm{pt}\}$.

	Then following the same process as in (\ref{e01112})-(\ref{eq:1.15}), modulo exact forms on $B$, we have
		\begin{multline}\label{eq:1.42}
	\tilde{\eta}_g(\pi',\mA)-\tilde{\eta}_g^{BC}(\pi',E\sqcup(\ker D_X^E)^{op})
	\\
	=-\frac{1}{2\sqrt{\pi}\sqrt{-1}}\lim_{u\to+\infty}
	\psi_B\int_0^1\left\{\tr_s\left[g\exp\left(-\left(	\widetilde{\mathbb{B}}_{u^2}'^H\right)^2 \right) \right]\right\}^{ds}ds.
	\end{multline}
Let $R^{\ker}=(\nabla^{\ker})^2$ be the curvature of the connection $\nabla^{\ker}$ 
on $\ker D_X^E$.
	As in (\ref{eq:1.16})-(\ref{eq:1.36}), modulo exact forms on $B$,
	\begin{multline}\label{eq:1.44}
	\tilde{\eta}_g(\pi',\mA)-\tilde{\eta}_g^{BC}(\pi',E\sqcup(\ker D_X^E)^{op})
	\\
	=\frac{1}{2\sqrt{\pi}\sqrt{-1}}\psi_B\int_0^{+\infty}e^{-t^2}dt\cdot \tr_s^V[g\mA
	\exp(-	R^{\ker}\oplus R^{\ker})].
	\end{multline}
	Note that 
	\begin{multline}\label{eq:1.45}
	\mA
	\exp(-	R^{\ker}\oplus R^{\ker})=
	\left(
	\begin{array}{cc}
	0&1\\
	1&0
	\end{array}
	\right)
	\left(
	\begin{array}{cc}
	\exp(-R^{\ker})&0\\
	0&\exp(-R^{\ker})
	\end{array}
	\right)
\\	=
	\left(
	\begin{array}{cc}
	0&\exp(-R^{\ker})\\
	\exp(-R^{\ker})&0
	\end{array}
	\right).
	\end{multline}
	Since the group action commutes with $\mA$ and $R^{\ker}$, $\tr_s^V[g\mA
	\exp(-	R^{\ker}\oplus R^{\ker})]=0$.
	So Proposition \ref{prop:1.05} follows from (\ref{eq:1.30}), 
	(\ref{eq:1.44}) and (\ref{eq:1.45}).

	The proof of Proposition \ref{prop:1.05} is completed.
\end{proof}  

\begin{rem}
As in (\ref{eq:1.35}), from (\ref{eq:0.08}) and (\ref{eq:1.24}), we have
	\begin{multline}
	d^B\tilde{\eta}_g^H(\pi',\mA)=d^B\tilde{\bar{\eta}}_g^{BC}(\pi',E)
	\\
	=\int_{X^g}\widehat{\mathrm{A}}_g(TX,\nabla^{TX})\ch_g(E,\nabla^E)
	-\ch_g(\ker D_X^E, \nabla^{\ker}),
	\end{multline}
	which is compatible with Proposition \ref{prop:1.05}.

\end{rem} 


\section{Anomaly formula}\label{s03} 

In this section, we will prove Theorem \ref{thm:0.04}. We use the notations and the assumptions in Section \ref{s0003}.

Let $s\in I$, $I=[0,1]$, parametrize 
a smooth path of equivariant horizontal subbundles
$\{T_{s}^HW\}_{s\in [0,1]}$.
The existence of such path follows from the affineness
of the space of equivariant splitting maps in the short
 exact sequence
$0\to TX\to TW\to \pi^*TB\to 0$.
Let $g_s^{TX}$ and $h_s^{E}$
be $G$-invariant metrics on $TX$ and $E$ connecting $(g_0^{TX}, h_0^E)$ and $(g_1^{TX}, h_1^E)$ smoothly. Let $\nabla_s^E$
be a $G$-invariant Hermitian connection 
on $(E,h_s^E)$ connecting $\nabla_0^E$ and $\nabla_1^E$ smoothly.
Now we get a smooth path connecting the geometric data
$(T_0^HW, g_0^{TX}, h_0^{E}, \nabla_0^{E})$ and 
$(T_1^HW,$ $ g_1^{TX}, h_1^{E}, \nabla_1^{E})$.
Let $D_{X,s}^E$ be the fiberwise Dirac operator associated 
with $(T_s^HW, g_s^{TX}, \nabla_s^{E})$. 

If $\ind(D_{X,0}^E)=0\in K_G^*(B)$, by the homotopy invariance 
of the equivariant family index, we have $\ind(D_{X,1}^E)=0\in K_G^*(B)$.
In this case, let $\mA_0$, $\mA_1$ be perturbation operators with respect to $D_{X,0}^E$, $D_{X,1}^E$ respectively.
Let $\tilde{\eta}_{g,0}(\pi,\mA_0)$ and $\tilde{\eta}_{g,1}(\pi,\mA_1)$ be corresponding 
equivariant eta forms with perturbations.
Let $P_0$, $P_1$ be orthogonal projections onto the eigenspaces of the positive spectrum of $D_{X,0}^E+\mA_0$, $D_{X,1}^E+\mA_1$ respectively.
Let $\mathrm{sf}_G\{(D_{X,0}^E+\mA_0, P_0), (D_{X,1}^E+\mA_1, P_1)\}\in K_G^*(B)$ be the equivariant Dai-Zhang higher spectral flow defined in \cite[Definitions 3.7, 3.8]{Liu21} (cf. also \cite[Definition 1.5]{DZ98}), which we 
denote by $\mathrm{sf}_G\{D_{X,0}^E+\mA_0, D_{X,1}^E+\mA_1\}$
for simplicity.
The following proposition was established in \cite{Liu21}, which is a generalization of \cite[Theorem 0.1]{DZ98}. 

	\begin{prop}\label{prop:2.01} \cite[Theorem 1.2]{Liu21}
	For any $g\in G$, modulo exact forms on $B$, we have
	\begin{multline}\label{D07}
	\tilde{\eta}_{g,1}(\pi, \mA_1)-\tilde{\eta}_{g,0}(\pi, \mA_0)=\int_{X^g}\widetilde{\widehat{\mathrm{A}}}_g(TX, \nabla_0^{TX}, \nabla_1^{TX})
	\, \ch_g(E, \nabla_0^{E})
	\\
	+\int_{X^g}\widehat{\mathrm{A}}_g(TX, \nabla_1^{TX} )\, \widetilde{\ch}_g(E, \nabla_0^{E},\nabla_1^{E})
	+
	\ch_g\left(\mathrm{sf}_G\{D_{X,0}^E+\mA_0, D_{X,1}^E+\mA_1\}\right).
	\end{multline} 
\end{prop}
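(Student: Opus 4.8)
The plan is to run the transgression argument already used for Proposition \ref{prop:1.03}, but now along a nontrivial path of geometric data, and to identify the resulting large--time boundary term with the equivariant higher spectral flow. First I would interpolate all the data: choose smooth $G$--invariant paths $T_s^HW$, $g_s^{TX}$, $h_s^E$, $\nabla_s^E$ and perturbation operators $\mA_s$, $s\in I=[0,1]$, joining the data at the two ends, with $\mA_s$ locally constant equal to $\mA_0,\mA_1$ near $s=0,1$. This gives a fiberwise Dirac operator $D_{X,s}^E$ and, via (\ref{eq:1.09}), a perturbed rescaled Bismut superconnection $\mathbb{B}_{u,s}'$ for the fibration $W\times I\to B\times I$ with fiber $X$. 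I then form the total form
\[
\alpha=\psi_{\R\times B\times I}\,\widetilde{\tr}\Bigl[g\exp\Bigl(-\Bigl(\mathbb{B}_{u,s}'+du\wedge\tfrac{\partial}{\partial u}+ds\wedge\tfrac{\partial}{\partial s}\Bigr)^2\Bigr)\Bigr]
\]
on $\R_+\times B\times I$ and decompose it by its $du$-- and $ds$--components exactly as in (\ref{e01112}), writing $\alpha=du\wedge\gamma+ds\wedge r_1+du\wedge ds\wedge r_2+r_3$.

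Second, the transgression identity \cite[Theorem 9.17]{BGV04} applied to the total superconnection forces $d_{\R\times B\times I}\alpha=0$, which yields the key relation $\partial_s\gamma=\partial_u r_1+d^Br_2$, the analogue of (\ref{e01113}). Integrating this over $(u,s)\in\R_+\times I$ and using Fubini (justified by the uniform estimates \cite[(2.72),(2.77)]{Liu17} along the compact path) expresses $\tilde{\eta}_{g,1}(\pi,\mA_1)-\tilde{\eta}_{g,0}(\pi,\mA_0)$, modulo $d^B$--exact forms, as $\int_I r_1(0,s)\,ds-\int_I r_1(\infty,s)\,ds$, exactly as in (\ref{e01114}). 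For the small--time piece, the equivariant family local index theorem gives, as $u\to 0$, convergence of the relevant heat supertrace to the equivariant index density $\Ahat_g(TX,\nabla_s^{TX})\,\ch_g(E,\nabla_s^E)$ localized on $X^g$ by fixed--point localization; its $ds$--component integrated over $I$ is by definition the transgression, so $\int_I r_1(0,s)\,ds$ reproduces the two equivariant Chern--Simons terms $\int_{X^g}\wi{\Ahat}_g(TX,\nabla_0^{TX},\nabla_1^{TX})\ch_g(E,\nabla_0^E)+\int_{X^g}\Ahat_g(TX,\nabla_1^{TX})\wi{\ch}_g(E,\nabla_0^E,\nabla_1^E)$ on the right of (\ref{D07}).

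The crux is the large--time piece $-\int_I r_1(\infty,s)\,ds$. Since $D_{X,0}^E+\mA_0$ and $D_{X,1}^E+\mA_1$ are invertible, the contributions at the endpoints $s=0,1$ decay, and the term is governed entirely by the parameter values at which $D_{X,s}^E+\mA_s$ loses invertibility, i.e. where the spectrum crosses zero. The subtlety is that the limit $u\to+\infty$ does \emph{not} commute with integration over $s$ across such crossings, and this failure of commutativity is precisely the spectral flow. To organize it I would invoke the equivariant spectral--section formalism of Dai--Zhang: because $\ind(D_{X,0}^E)=0\in K_G^*(B)$, an equivariant spectral section exists for the whole family $\{D_{X,s}^E+\mA_s\}$, which subdivides $I$ into subintervals carrying a uniform $G$--invariant spectral gap. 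On each piece the large--$u$ heat operator is approximated by the finite--rank projection onto the small eigenvalues, exactly as in (\ref{eq:1.21})--(\ref{eq:1.22}), and the net jump of these finite--rank equivariant projection bundles is the Chern character of the equivariant higher spectral flow $\mathrm{sf}_G\{D_{X,0}^E+\mA_0,D_{X,1}^E+\mA_1\}$ of \cite[Definitions 3.7, 3.8]{Liu21}; equivalently one identifies $-\int_I r_1(\infty,s)\,ds$ with $\ch_g$ of the family APS index of the Dirac operator on the cylinder $X\times I$ with boundary conditions $(P_0,P_1)$, which is how $\mathrm{sf}_G$ is defined. Assembling the three contributions yields (\ref{D07}).

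The main obstacle I anticipate is this large--time analysis: controlling the $u\to+\infty$ limit uniformly past the non--invertible parameter values and matching the resulting finite--rank equivariant data with the definition of $\mathrm{sf}_G$. This demands the full equivariant spectral--section machinery, so that the spectral projections $P_s$ vary as genuine equivariant vector bundles over $B$ on each subinterval, together with fixed--point localization to keep the index density supported on $X^g$. By contrast, the small--time limit and the transgression step are routine adaptations of the argument already carried out in the proof of Proposition \ref{prop:1.03}.
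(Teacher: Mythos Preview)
The paper does not prove Proposition~\ref{prop:2.01}; it simply quotes it as \cite[Theorem 1.2]{Liu21} and uses it as a black box. So there is no proof here to compare your attempt against.

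That said, your outline is the standard transgression strategy and is broadly the approach taken in \cite{Liu21}: interpolate the geometric data, apply the closedness of the total heat form to get $\partial_s\gamma=\partial_u r_1+d^Br_2$, and read off the small--time and large--time boundary contributions. Your identification of the small--time term with the two equivariant Chern--Simons integrals via the equivariant family local index theorem is correct.

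Where your sketch is thin is exactly where you flag it: the large--time term. Two points deserve care. First, you cannot in general choose a smooth path of \emph{perturbation} operators $\mA_s$ with $D_{X,s}^E+\mA_s$ invertible for all $s$; if you could, the spectral flow would vanish. What one actually does (and what \cite{Liu21} does) is fix a spectral section for the whole family and compare the two endpoint perturbations to it, so the spectral flow appears as a difference of finite--rank equivariant $K$--classes rather than as a count of crossings along a path. Your description in terms of ``subdividing $I$ into subintervals with a uniform gap'' is morally this, but the clean formulation goes through the spectral--section definition of $\mathrm{sf}_G$ directly rather than through a crossing analysis. Second, the odd--dimensional case ($\dim X$ odd, $\mathrm{sf}_G\in K_G^1(B)$) requires the suspension trick of passing to $B\times S^1$; your sketch implicitly treats only the even case. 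Neither issue is fatal, but both are where the real work in \cite{Liu21} lies.
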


\

\textbf{Case 1}: $\dim X$ is odd.

\

In this case, Theorem \ref{thm:0.04}
follows directly from Propositions \ref{prop:1.03} and 
\ref{prop:2.01}.

\begin{thm}\label{thm:2.02}
	If $\dim X$ is odd, for any $g\in G$, modulo exact forms on $B$, we have
	\begin{multline}
	\tilde{\bar{\eta}}_{g,1}^{BC}(\pi, E)-\tilde{\bar{\eta}}_{g,0}^{BC}(\pi, E)=\int_{X^g}\widetilde{\widehat{\mathrm{A}}}_g(TX, \nabla_0^{TX}, \nabla_1^{TX})
	\, \ch_g(E, \nabla_0^{E})
	\\
	+\int_{X^g}\widehat{\mathrm{A}}_g(TX, \nabla_1^{TX} )\, \widetilde{\ch}_g(E, \nabla_0^{E},\nabla_1^{E})
	+
	\ch_g\left(\mathrm{sf}_G\{D_{X,0}^E+P^{\ker}_0, D_{X,1}^E+P^{\ker}_1\}\right),
	\end{multline} 
	Here $P^{\ker}_0$, $P^{\ker}_1$ are the orthogonal 
	projections onto $\ker D_{X,0}^E\,$, $\ker D_{X,1}^E$
	respectively.
\end{thm}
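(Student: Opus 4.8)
The plan is to combine the comparison result for the odd-dimensional case (Proposition~\ref{prop:1.03}) with the variation formula for equivariant eta forms with perturbation (Proposition~\ref{prop:2.01}). First I would observe that since $\ker(D_{X,0}^E)$ and $\ker(D_{X,1}^E)$ form equivariant vector bundles over $B$, the operators $D_{X,0}^E+P^{\ker,0}$ and $D_{X,1}^E+P^{\ker,1}$ are fiberwise invertible, where $P^{\ker,i}$ denotes the orthogonal projection onto $\ker(D_{X,i}^E)$ with respect to the $L^2$ inner product. By \cite[Proposition 3.3]{Liu21} this forces $\ind(D_{X,0}^E)=\ind(D_{X,1}^E)=0\in K_G^1(B)$, and $P^{\ker,0}$, $P^{\ker,1}$ are legitimate perturbation operators. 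Thus Proposition~\ref{prop:1.03} applies at both endpoints of the path and gives, modulo exact forms on $B$,
\begin{align*}
\tilde{\eta}_{g,0}(\pi,P^{\ker,0})=\tilde{\bar{\eta}}^{BC,0}_g(\pi,E),\qquad
\tilde{\eta}_{g,1}(\pi,P^{\ker,1})=\tilde{\bar{\eta}}^{BC,1}_g(\pi,E).
\end{align*}

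Next I would invoke Proposition~\ref{prop:2.01} directly with the specific choices $\mA_0=P^{\ker,0}$ and $\mA_1=P^{\ker,1}$. This yields, modulo exact forms,
\begin{align*}
\tilde{\eta}_{g,1}(\pi,P^{\ker,1})-\tilde{\eta}_{g,0}(\pi,P^{\ker,0})
&=\int_{X^g}\wi{\Ahat}_g(TX,\nabla_0^{TX},\nabla_1^{TX})\,\ch_g(E,\nabla_0^E)\\
&\quad+\int_{X^g}\Ahat_g(TX,\nabla_1^{TX})\,\wi{\ch}_g(E,\nabla_0^E,\nabla_1^E)\\
&\quad+\ch_g\!\left(\Sf_G\{D_{X,0}^E+P^{\ker,0},D_{X,1}^E+P^{\ker,1}\}\right).
\end{align*}
Substituting the two comparison identities from the previous paragraph into the left-hand side gives precisely the claimed formula, with $x=\Sf_G\{D_{X,0}^E+P^{\ker,0},D_{X,1}^E+P^{\ker,1}\}\in K_G^1(B)$.

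The only subtlety I anticipate concerns the choice of path and the homotopy invariance used implicitly: Proposition~\ref{prop:2.01} is stated for a fixed smooth path of geometric data connecting the two endpoints, and the higher spectral flow term a priori depends on that path together with the choices of spectral projections $P_0$, $P_1$ used to define the positive eigenspace bundles. So I would be careful to fix once and for all a smooth path $(T_s^HW,g_s^{TX},h_s^E,\nabla_s^E)$ connecting the two geometric data (which exists by the affineness of the space of equivariant horizontal subbundles, metrics, and connections), and take $P_0$, $P_1$ to be the projections onto the positive spectrum of $D_{X,0}^E+P^{\ker,0}$, $D_{X,1}^E+P^{\ker,1}$ as in the statement preceding Proposition~\ref{prop:2.01}. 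With these choices fixed, the right-hand side is well-defined and the argument is essentially a two-line substitution; there is no genuine obstacle in the odd-dimensional case, which is why the excerpt remarks that Theorem~\ref{thm:2.02} ``follows directly'' from Propositions~\ref{prop:1.03} and~\ref{prop:2.01}. The real work — the even-dimensional case, where $P^{\ker}$ fails to anticommute with the $\Z_2$-grading and one must pass to the auxiliary fibration $\pi'$ of Proposition~\ref{prop:1.05} — is handled separately.
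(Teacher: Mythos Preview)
Your proposal is correct and matches the paper's approach exactly: the paper simply states that in the odd-dimensional case the theorem ``follows directly from Propositions \ref{prop:1.03} and \ref{prop:2.01}'' and gives no further argument, so what you have written is precisely the intended two-line substitution spelled out in full.
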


Remark that if there is a smooth path connecting 
$(T_0^HW, g_0^{TX},h_0^E,\nabla_0^E)$ and 
$(T_1^HW, g_1^{TX},h_1^E,\nabla_1^E)$ such that
$\dim \ker D_{X,s}^E$ is locally constant, we have
$\mathrm{sf}_G\{D_{X,0}^E+P^{\ker}_0, D_{X,1}^E+P^{\ker}_1\}=0$ 
and $[\ch(\ker D_{X,0}^E)]=[\ch(\ker D_{X,1}^E)]$. 
This is the case in \cite[Theorem 1.2]{Liu17}.

\

\textbf{Case 2}: $\dim X$ is even.

\

In this case, recall that we assume  $\ker D_{X,i}^E=\ker D_{+,i}\oplus 
\ker D_{-,i}$, $i=0, 1$, are $\Z_2$-graded vector bundles over $B$.
Remark that 
if $s\in (0,1)$, $\dim \ker D_{X,s}^E$ is usually not locally constant
and $\ker D_{X,0}^E$ is not isomorphic to $\ker D_{X,1}^E$
as equivariant vector bundles.
We will use a perturbation trick in
\cite{Lo94,FL10} to overcome this difficulty.


Consider the equivariant fiber bundle $\pi\times \Id:W\times [0,1]\to B\times [0,1]$
and the equivariant $\Z_2$-graded Hilbert bundle
$\mathcal{H}_{\pm}$ over $B\times [0,1]$ with fiber 
$L^2(X_{(b,s)},\mathcal{S}_{\pm}(TX)\otimes E|_{X_{(b,s)}})$
for $(b,s)\in B\times [0,1]$.
Let $T^H\widetilde{W}|_{B\times\{s\}}:=T^H_sW\oplus \R$,
which is an equivariant horizontal subbundle with respect to $\pi\times \Id$.
Let $(g^{T\tilde{X}}, h^{\tilde{E}}, \nabla^{\tilde{E}})$
be the geometric data associated with $\pi\times\Id$ induced from $(g_s^{TX}, h_s^{E}, \nabla_s^E)$.
Let $\tilde{D}_X^E=\tilde{D}_+\oplus \tilde{D}_-$ 
be the corresponding $\Z_2$-graded fiberwise Dirac operator. 
Then $\tilde{D}_{\pm}|_{B\times \{0\}}=D_{\pm,0}$
and $\tilde{D}_{\pm}|_{B\times \{1\}}=D_{\pm,1}$.

By \cite[Lemma 7.13]{FL10} (see also \cite[p27]{Lo94}), there exist finite dimensional
 vector subbundles $L_{\pm}\subset \mathcal{H}_{\pm}$
 and complementary closed subbundles $K_{\pm}\subset
 \mathcal{H}_{\pm}$ over $B\times [0,1]$, i.e., $\mathcal{H}_{\pm}=L_{\pm}\oplus
 K_{\pm}$,
 such that $\tilde{D}_{\pm}\in \mathrm{Hom}(\mathcal{H}_{\pm},\mathcal{H}_{\mp})$
 is block diagonal as a map 
 $\tilde{D}_{\pm}:L_{\pm}\oplus K_{\pm}\to L_{\mp}\oplus K_{\mp}$
 and $\tilde{D}_{\pm}$ restricts to an isomorphism between
 $K_+$ and $K_-$. 
 Remark that the direct sums here may not be orthogonal.
 It is easy to see that we could choose $L_{\pm}$ and
 $K_{\pm}$ to be $G$-equivariant.

Note that
$\ker \tilde{D}_{\pm}\subset L_{\pm}$.
Add a new $\Z_2$-graded vector bundle 
$(L_+)_-\oplus (L_-)_+$ over $B\times [0,1]\times \{\mathrm{pt}\}$.
As in Section \ref{s02}, $(L_+)_-$ and $(L_-)_+$ are vector 
bundles $L_+$ and $L_-$ with the inverse grading.

As in Section \ref{s02},
we consider the equivariant fiber bundle $\pi'\times \Id:(W\sqcup B)
\times [0,1]\to B\times [0,1]$ with fiber $X\sqcup \{\mathrm{pt}\}$.
Let $\tilde{D}^{L}$  be the fiberwise Dirac operator on $\mathcal{H}^L=\mathcal{H}\oplus(L_+)_-\oplus(L_-)_+$.
Then $\tilde{D}^L=\tilde{D}_X^E\oplus 0\oplus 0$.
Let $P^L:\mathcal{H}^L\to L_+\oplus L_-\oplus (L_+)_-\oplus(L_-)_+$ 
be the projection.
Let 
\begin{align}
\mA^L=\left(
\begin{array}{cc}
0&1\\
1&0
\end{array}
\right)\circ P^L
\end{align}
on $(L_+\oplus(L_-)_+)\oplus((L_+)_-\oplus L_-)$.
From \cite[Lemma 7.20]{FL10}, $\tilde{D}^L+\mA^L$ is invertible.
Since $\mA^L$ anti-commutes with the $\Z_2$-grading of $\mH^L$, it
is a perturbation operator associated with $\tilde{D}^L$.

For $i=0,1$, $\ker D_{\pm, i}$ are equivariant subbundles of 
$L_{\pm,i}:=L_{\pm}|_{B\times \{i\}}$. Let $F_{\pm,i}$ be the orthogonal
complements of $\ker D_{\pm, i}$ in $L_{\pm,i}$.
Then $L_{\pm,i}=\ker D_{\pm,i}\oplus F_{\pm,i}$.
Since $D_{\pm,i}:F_{\pm,i}\to F_{\mp,i}$, we have $F_{+,i}\simeq F_{-,i}$ as equivariant vector bundles
over $B\times \{i\}$, $i=0,1$.
We identify $F_{+,i}$ and $F_{-,i}$ via the isomorphism $D_{+,i}$.
Let $P_i^F:\mathcal{H}^L|_{B\times\{i \}}\to \ker D_{X,i}^E \oplus (L_{+,i})_-\oplus(L_{-,i})_+$
be the orthogonal projection.
Let 
\begin{align}
\mA_{F,i}^{\ker}=\left(
\begin{array}{cccc}
0&1&0&0\\
1&0&0&0\\
0&0&0&1\\
0&0&1&0
\end{array}
\right)\circ P_i^F
\end{align}
on 
$$\big(\ker D_{+,i}\oplus (\ker D_{-,i})_+ \big)\oplus 
\big((\ker D_{+,i})_-\oplus \ker D_{-,i} \big)\oplus (F_{-,i})_+\oplus 
(F_{+,i})_- .$$
Let $D^L_i=\tilde{D}^L|_{B\times\{i\}}$, $i=0, 1$. 
Then $\mA_{F,i}^{\ker}$ are perturbation operators associated with $D^L_i $.

For $i=0,1$,
let $\nabla_i^{\mathcal{E}}$ and $\nabla_i^{\ker}$ be connections
in (\ref{eq:1.12}) and (\ref{eq:1.03}) associated with $(T_i^HW,$ $ g_{i}^{TX}, h_i^E, \nabla_i^E)$.
Let $\nabla_i^L$, $\nabla_i^F$ be the projected compatible connections
on $L_i$, $F_i$ respectively as in (\ref{eq:1.03}).
Let $\nabla_i^{\mathcal{E},F}=\nabla_i^{\mathcal{E}}\oplus 
\nabla_i^{\ker}\oplus\nabla_i^{F}$ on $\mathcal{H}^L|_{B\times\{i \}}$.
Set 
\begin{align}
\widetilde{\mathbb{B}}_{u^2,i}^F:=u(D_i^{L}+\chi(u)\mA_{F,i}^{\ker})+
\nabla_i^{\mathcal{E},F}
-\frac{c(T_i)}{4u},
\end{align}
where $c(T_i)$ is associated with $(T^H_iW, g_i^{TX})$.
Let $\tilde{\eta}_{g,i}^{H}(\pi', \mA_{F,i}^{\ker})$ be corresponding
eta forms for $g\in G$. 

For two equivariant geometric triples
$\underline{E}=(E,h^E,\nabla^E)$ and $\ul{F}=(F,h^F, \nabla^F)$,
if $E\simeq F$ as equivariant vector bundles over $B$, we denote 
by $\wi{\ch}_g(\ul{E},\ul{F})$ the equivariant Chern-Simons form
between $\ul{E}$ and $\ul{F}$ for $g\in G$. Moreover, we have
\begin{align}
d\, \wi{\ch}_g(\ul{E},\ul{F})=\ch_g(F,\nabla^F)-\ch_g(E,\nabla^E).
\end{align}

\begin{lemma}\label{lemma:2.03}
If $\dim X$ is even, for $g\in G$, $i=0,1$, we have
\begin{align}
\tilde{\eta}_{g,i}^{H}(\pi',\mA_{F,i}^{\ker})=\tilde{\bar{\eta}}_{g,i}^{BC}(\pi, E)-\widetilde{\ch}_g\left(\ul{F_{-,i}},\ul{F_{+,i}}\right) \mod d\Omega^*(B).
\end{align}
\end{lemma}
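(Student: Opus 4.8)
The plan is to reduce Lemma \ref{lemma:2.03} to Proposition \ref{prop:1.05} together with a Chern--Simons bookkeeping argument. Recall that in Proposition \ref{prop:1.05} the relevant perturbation used only the minimal correction involving $\ker D_{\pm}$; here we have thickened the kernel to the full finite-dimensional bundle $L_{\pm,i}$ and added the block $F_{\pm,i}$. The key observation is that the operator $D_i^L + \chi(u)\mA_{F,i}^{\ker}$ differs from the ``minimal'' perturbed operator $D^H + \chi(u)\mA$ (in the notation of Section \ref{s02}, built from $\ker D_{\pm,i}$ alone) only on the summand $(F_{-,i})_+\oplus (F_{+,i})_-$ together with the part of $\tilde D^L$ acting within $F_{\pm,i}$ by the isomorphism $D_{\pm,i}\colon F_{\pm,i}\to F_{\mp,i}$. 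On that summand $\mA_{F,i}^{\ker}$ is again of the off-diagonal form $\left(\begin{smallmatrix}0&1\\1&0\end{smallmatrix}\right)$, so $(D^{F}+\mA_{F})^2 = D^{F,2}+\Id$ is invertible and, crucially, the fiber over $\{\mathrm{pt}\}$ is a point, so the contribution of the added bundles $(F_{-,i})_+\oplus(F_{+,i})_-$ to any Bismut--Cheeger eta form vanishes, exactly as in (\ref{eq:1.30}).

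First I would run verbatim the argument of Proposition \ref{prop:1.05}, i.e.\ the chain (\ref{e01112})--(\ref{eq:1.22}) applied to $\widetilde{B}_{u^2,i}^E$, to show that modulo exact forms
\begin{align*}
\tilde{\eta}_{g}^{H,i}(\pi',\mA_{F,i}^{\ker}) - \tilde{\eta}_g^{BC}\big(\pi',(E\sqcup(\ker D_X^E)^{op})\sqcup (F_{\pm,i})^{op}\big) = -\lim_{u\to\infty}\psi_B\int_0^1 \tr_s\big[g\exp(-(\widetilde{\mathbb B}_{u^2,i}'^{E})^2)\big]^{ds}ds,
\end{align*}
and that the right-hand limit, after the adiabatic estimate (\ref{eq:1.21})--(\ref{eq:1.22}) adapted to $L_{\pm,i}$, localizes to a supertrace over $V\oplus F$ of $g\mA_{F,i}^{\ker}\exp(-R^{L}\oplus R^{L})$. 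By the same off-diagonal computation (\ref{eq:1.45}) and the fact that $g$ commutes with $\mA_{F,i}^{\ker}$ and with the curvature, this supertrace is zero. Hence $\tilde{\eta}_{g}^{H,i}(\pi',\mA_{F,i}^{\ker})$ equals the Bismut--Cheeger eta form of $\pi'$ with the full added bundle, modulo exact forms.

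Next I would identify that added eta form. Since the $\{\mathrm{pt}\}$-fiber contributes nothing, $\tilde\eta_g^{BC}(\pi',\cdot)$ over $\pi'$ is just $\tilde\eta_g^{BC}(\pi,E)$ (as in (\ref{eq:1.30})), but the \emph{reduced} normalization in (\ref{eq:0.10}) for the $\Z_2$-graded added bundle $\ker D_X^E\oplus (F_{\pm,i})$ on the point fiber must be accounted for; this is where $\widetilde{\ch}_g(\ul{F_{-,i}},\ul{F_{+,i}})$ enters. Concretely, the added $\Z_2$-graded point-fiber bundle $(F_{-,i})_+\oplus(F_{+,i})_-$ has a chosen equivariant isomorphism $F_{+,i}\simeq F_{-,i}$ coming from $D_{+,i}$, but the metrics and connections $\nabla_i^F$ on the two copies need not match under this isomorphism; the Chern--Simons form $\widetilde{\ch}_g(\ul{F_{-,i}},\ul{F_{+,i}})$ is exactly the transgression measuring this discrepancy, and adding it on the left restores the identity $\tilde\eta_g^{H,i}(\pi',\mA_{F,i}^{\ker}) + \widetilde{\ch}_g(\ul{F_{-,i}},\ul{F_{+,i}}) = \tilde{\bar\eta}^{BC,i}_g(\pi,E)$ modulo exact forms, using also $d\widetilde{\ch}_g(\ul{F_{-,i}},\ul{F_{+,i}}) = \ch_g(F_{-,i},\nabla^{F,-}) - \ch_g(F_{+,i},\nabla^{F,+})$ and the curvature identities for the projected connections as in (\ref{eq:1.20}).

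I expect the main obstacle to be the bookkeeping in the last paragraph: keeping track of the gradings and of which Chern character terms are cancelled by the off-diagonal supertrace versus which survive as the Chern--Simons correction. In particular one must check carefully that the adiabatic limit of $[\exp(-(\widetilde{\mathbb B}_{u^2,i}'^{E})^2)]^{ds}$ picks up only the $V$-block (where the supertrace vanishes) and that the $F$-block genuinely contributes the exact form $d\widetilde{\ch}_g(\ul{F_{-,i}},\ul{F_{+,i}})$ rather than something else; this rests on the fact that on $F_{\pm,i}$ the operator $\tilde D^L$ acts by a fiberwise isomorphism so its large-$u$ behaviour is governed by $H'$ (invertible) in the notation of (\ref{eq:1.18})--(\ref{eq:1.19}), forcing the projected connection on that block to reduce to $\nabla_i^F$ exactly as in (\ref{eq:1.20}). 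Everything else is a routine transcription of the proofs of Propositions \ref{prop:1.03} and \ref{prop:1.05}.
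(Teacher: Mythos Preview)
Your plan to re-run the Proposition \ref{prop:1.05} computation directly with the full perturbation $\mA_{F,i}^{\ker}$ has a genuine gap on the $F$-block, and your accounting for where the Chern--Simons term comes from is incorrect. You assert that, by the off-diagonal computation (\ref{eq:1.45}), the $s$-limit localizes to a supertrace that vanishes. This works on the $\ker D_{\pm,i}$ block because the \emph{same} connection $\nabla_i^{\ker}$ sits on both graded copies, so $[\mA_i,\nabla_i^{\ker}\oplus\nabla_i^{\ker}]=0$ and the exponent really has the form leading to (\ref{eq:1.45}). But on $(F_{-,i})_+\oplus(F_{+,i})_-$ the projected connections $\nabla_i^{F,-}$ and $\nabla_i^{F,+}$ are \emph{different} under the isomorphism $F_{+,i}\simeq F_{-,i}$, so $[\mA_{F,i},\nabla_i^{F,-}\oplus\nabla_i^{F,+}]\neq 0$. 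This commutator enters the exponent at order $us$, destroys the simple factorization behind (\ref{eq:1.45}), and produces a nonzero contribution --- precisely $\widetilde{\ch}_g(\ul{F_{-,i}},\ul{F_{+,i}})$, not an exact form. Your fallback explanation via ``the reduced normalization (\ref{eq:0.10})'' cannot work either: for even $\dim X$ the reduced eta form equals the unreduced one, so there is no correction of that kind.

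The paper sidesteps this by using the direct-sum splitting $\mA_{F,i}^{\ker}=\mA_i\oplus\mA_{F,i}$. Since $D_i^L$, the connection $\nabla_i^{\mathcal{E},F}$, and the perturbation all respect the decomposition into $\big(\mathcal{E}\oplus(\ker D_{+,i})_-\oplus(\ker D_{-,i})_+\big)\oplus\big((F_{+,i})_-\oplus(F_{-,i})_+\big)$, Definition \ref{defn:1.02} splits the perturbed eta form additively. The first summand is exactly the quantity handled by Proposition \ref{prop:1.05}. The second is the perturbed eta form over a point fiber with bundle $(F_{-,i})_+\oplus(F_{+,i})_-$ and swap perturbation $\mA_{F,i}$; unwinding Definition \ref{defn:1.02} with $D=0$ and $c(T)=0$ gives the integral in (\ref{eq:2.10}), which is by construction the Chern--Simons transgression $\widetilde{\ch}_g(\ul{F_{-,i}},\ul{F_{+,i}})$ as in (\ref{eq:2.11}). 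Rearranging yields the lemma. Your approach can be salvaged if you carry out the $F$-block limit honestly and recognize it as this Chern--Simons integral, but the additive splitting is both shorter and avoids the delicate commutator analysis.
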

\begin{proof}
For $i=0,1$, let
\begin{align}
\mA_i=\left(
\begin{array}{cc}
0&1\\
1&0
\end{array}
\right)
\end{align}
on $\big(\ker D_{+,i}\oplus(\ker D_{-,i})_+\big)\oplus 
\big((\ker D_{+,i})_-\oplus \ker D_{-,i}\big)$.
Let 
\begin{align}
\mA_{F,i}=\left(
\begin{array}{cc}
0&1\\
1&0
\end{array}
\right)
\end{align}
on $(F_{-,i})_+\oplus 
(F_{+,i})_-$. Then $\mA_{F,i}^{\ker}=(\mA_i\oplus \mA_{F,i})\circ P_i^F$.
By Proposition \ref{prop:1.05}, we have
\begin{align}\label{eq:2.09}
\tilde{\eta}_{g}^{H}(\pi',\mA_{i})=\tilde{\bar{\eta}}_{g,i}^{BC}(\pi, E) \mod d\Omega^*(B).
\end{align}
From Definition \ref{defn:1.02}, (\ref{i18}) and (\ref{eq:1.25}), modulo exact forms on $B$,
\begin{align}\label{eq:2.10}
\tilde{\eta}_{g,i}^{H}(\pi',\mA_{F,i}^{\ker})=\tilde{\eta}_{g}^H(\pi',\mA_{i})-
\int_0^{\infty} \left.\frac{\sqrt{-1}}{2\pi}\tr_s\right.\left[g\left.\frac{\partial \nabla^F_{u^2,i}}{\partial u}\right.
\exp\left(-\frac{(\nabla^{F}_{u^2,i})^2}{2\pi \sqrt{-1}}\right)\right] du,
\end{align}
where $\nabla_{u,i}^F=\nabla_{+,i}^F\oplus \nabla_{-,i}^F+\sqrt{u}\chi(\sqrt{u})\mA_{F,i}$ on $F_{+,i}\oplus F_{-,i}$.
We denote by 
\begin{align}\label{eq:2.10a}
\hat{\eta}_g(\nabla^{F}_{u^2,i}):=\int_0^{\infty} \left.\frac{\sqrt{-1}}{2\pi}\tr_s\right.\left[g\left.\frac{\partial \nabla^F_{u^2,i}}{\partial u}\right.
\exp\left(-\frac{(\nabla^{F}_{u^2,i})^2}{2\pi \sqrt{-1}}\right)\right] du,
\end{align}
which is the equivariant eta form with perturbation when the fiber is a point and is the analogue of the analytic torsion for a finite dimensional complex \cite{BGSI}.
Let $\widetilde{\nabla}^F_{u,i}:=\nabla^{F}_{-,i}\oplus \nabla^F_{-,i}+\sqrt{u}\mA_{F,i}$
on $(F_{-,i})_+\oplus F_{-,i}$. We deform the metric and the connection from $\ul{F_{+,i}}\oplus\ul{F_{-,i}}$
	to $\ul{F_{-,i}}\oplus\ul{F_{-,i}}$. Since $\mA_{F,i}$ is invertible, by \cite[Theorem 2.10]{Bi98},
modulo exact forms, we have
\begin{align}\label{eq:2.10b}
\hat{\eta}_g(\nabla^{F}_{u^2,i})-\hat{\eta}_g(\wi{\nabla}^{F}_{u^2,i})
=\widetilde{\ch}_g\left(\ul{(F_{-,i})_+}\oplus\ul{F_{-,i}}, \ul{F_{+,i}}\oplus\ul{F_{-,i}} \right) 
		=\widetilde{\ch}_g\left(\ul{F_{-,i}}, \ul{F_{+,i}}\right).
\end{align}
From \cite[(2.24)]{Bi98}, we see that $\hat{\eta}_g(\wi{\nabla}^{F}_{u^2,i})=0$.
So
modulo exact forms, by (\ref{eq:2.10b}), we have
\begin{align}\label{eq:2.11}
\hat{\eta}_g(\nabla^{F}_{u^2,i})=\widetilde{\ch}_g\left(\ul{F_{-,i}}, \ul{F_{+,i}}\right).
\end{align}

From (\ref{eq:2.09}), (\ref{eq:2.10}), (\ref{eq:2.10a}) and (\ref{eq:2.11}), we obtain
Lemma \ref{lemma:2.03}.

\end{proof}

Since $L$ is an equivariant vector bundle over $B\times [0,1]$,
$L_{0}$ is isomorphic to $ L_{1}$ as equivariant $\Z_2$-graded
vector bundles over $B$.


%
\begin{thm}\label{thm:2.04}
		If $\dim X$ is even, for any $g\in G$, modulo exact forms on $B$, we have
	\begin{multline}
	\tilde{\bar{\eta}}_{g,1}^{BC}(\pi, E)-\tilde{\bar{\eta}}_{g,0}^{BC}(\pi, E)=\int_{X^g}\widetilde{\widehat{\mathrm{A}}}_g(TX, \nabla_0^{TX}, \nabla_1^{TX})
	\, \ch_g(E, \nabla_0^{E})
	\\
	+\int_{X^g}\widehat{\mathrm{A}}_g(TX, \nabla_1^{TX} )\, \widetilde{\ch}_g(E, \nabla_0^{E},\nabla_1^{E})
	+
	\ch_g\left(\mathrm{sf}_G\{D_0^L+\mA_{F,0}^{\ker}, D_1^L+\mA_{F,1}^{\ker}\}\right)
	\\
+\widetilde{\ch}_g(\ul{F_{-,1}},\ul{F_{+,1}})-\widetilde{\ch}_g(\ul{F_{-,0}},\ul{F_{+,0}})-\wi{\ch}_g(\ul{L_0}, \ul{L_1})
\\
+\wi{\ch}_g(L_1, \nabla_1^{\ker}\oplus\nabla_1^{F}, \nabla_1^L)-\wi{\ch}_g(L_0, \nabla_0^{\ker}\oplus\nabla_0^{F}, \nabla_0^L).
	\end{multline} 
%
\end{thm}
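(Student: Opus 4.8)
The plan is to derive Theorem \ref{thm:2.04} from the variation formula for equivariant eta forms with perturbation, Proposition \ref{prop:2.01}, applied to the auxiliary fibration $\pi':W\sqcup B\to B$, and then to translate the outcome back to Bismut--Cheeger eta forms via the comparison Lemma \ref{lemma:2.03}.

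First I would build a path of equivariant geometric data for $\pi'$. On the component with fiber $X$ I take the smooth path $(T_s^HW,g_s^{TX},h_s^E,\nabla_s^E)$, $s\in[0,1]$, already fixed in this section. On the point component, whose bundle is $(L_+)_-\oplus(L_-)_+$, I use that $L$ is an equivariant vector bundle over $B\times[0,1]$ and $G$ acts trivially on $[0,1]$, so $L\simeq p^*L_0$ for $p:B\times[0,1]\to B$; under this identification the projected connection $\nabla^L$ becomes a smooth family $\{\nabla_s^L\}_{s\in[0,1]}$ on the fixed bundle $L_0$ with $\nabla_s^L|_{s=i}=\nabla_i^L$ and $L_0\simeq L_1$ equivariantly. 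Concatenating $\{\nabla_s^L\}$ with the linear paths from $\nabla_0^{\ker}\oplus\nabla_0^F$ to $\nabla_0^L$ and from $\nabla_1^L$ to $\nabla_1^{\ker}\oplus\nabla_1^F$ produces a path of equivariant geometric triples on the point component whose endpoints are exactly the data entering $\tilde{\eta}_{g}^{H,i}(\pi',\mA_{F,i}^{\ker})$. Since $\tilde{D}^L+\mA^L$ is invertible with $\mA^L$ a perturbation operator (\cite[Lemma 7.20]{FL10}), we have $\ind(\tilde{D}^L)=0\in K_G^0(B\times[0,1])$ by \cite[Proposition 3.3]{Liu21}, hence $\ind(D_i^L)=0$; as the $\mA_{F,i}^{\ker}$ are perturbation operators for $D_i^L$, Proposition \ref{prop:2.01} applies to $\pi'$ along this path (for the disconnected fibre $X\sqcup\{\mathrm{pt}\}$, exactly as in Section \ref{s02}).

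Next I would read off the right-hand side of Proposition \ref{prop:2.01} for $\pi'$. The fixed-point set of $g$ in $X\sqcup\{\mathrm{pt}\}$ is $X^g\sqcup\{\mathrm{pt}\}$, and the tangent bundle of the point component is trivial, so the characteristic-form part is the sum of the two integrals over $X^g$ in the statement and a point contribution, which is the equivariant Chern--Simons transgression of the $\Z_2$-graded bundle $(L_+)_-\oplus(L_-)_+$ along the chosen path of connections. By additivity of Chern--Simons forms along the concatenation, together with the sign from the opposite grading, this point contribution equals $-\widetilde{\ch}_g(L_0,\nabla_0^{\ker}\oplus\nabla_0^F,\nabla_0^L)-\widetilde{\ch}_g(\ul{L_0},\ul{L_1})+\widetilde{\ch}_g(L_1,\nabla_1^{\ker}\oplus\nabla_1^F,\nabla_1^L)$, where $\widetilde{\ch}_g(\ul{L_0},\ul{L_1})$ is computed with $\nabla_0^L$, $\nabla_1^L$ and the identification $L_0\simeq L_1$ furnished by $L$. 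The last term produced by Proposition \ref{prop:2.01} is $\ch_g(\mathrm{sf}_G\{D_0^L+\mA_{F,0}^{\ker},D_1^L+\mA_{F,1}^{\ker}\})$, the equivariant Dai--Zhang higher spectral flow of \cite[Definitions 3.7, 3.8]{Liu21} between the two endpoint perturbed operators.

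Finally I would combine this with Lemma \ref{lemma:2.03}, which gives $\tilde{\bar{\eta}}_{g}^{BC,i}(\pi,E)=\tilde{\eta}_{g}^{H,i}(\pi',\mA_{F,i}^{\ker})+\widetilde{\ch}_g(\ul{F_{-,i}},\ul{F_{+,i}})$ modulo exact forms for $i=0,1$; subtracting the $i=0$ identity from the $i=1$ one and inserting the expression for $\tilde{\eta}_{g}^{H,1}(\pi',\mA_{F,1}^{\ker})-\tilde{\eta}_{g}^{H,0}(\pi',\mA_{F,0}^{\ker})$ found above assembles exactly the right-hand side of the statement. I expect the main obstacle to be bookkeeping rather than analysis: one must verify that the endpoint values of the path of eta forms with perturbation really coincide with $\tilde{\eta}_{g}^{H,i}(\pi',\mA_{F,i}^{\ker})$ (this is where pinning down the point-component data through $\nabla^L$, rather than through an arbitrary isomorphism $L_0\simeq L_1$, matters for $\widetilde{\ch}_g(\ul{L_0},\ul{L_1})$ to be a well-defined form), and one must track the opposite-grading signs so that the correction terms $-\widetilde{\ch}_g(\ul{L_0},\ul{L_1})$ and $\pm\widetilde{\ch}_g(L_i,\nabla_i^{\ker}\oplus\nabla_i^F,\nabla_i^L)$ emerge with the signs in the statement. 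The convergence of all the integrals along the path is not an issue, being covered by the uniform estimates \cite[(2.72), (2.77)]{Liu17} used in Section \ref{s02}.
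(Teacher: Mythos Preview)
Your approach is correct and uses the same ingredients as the paper (Proposition~\ref{prop:2.01} applied to the auxiliary fibration $\pi':W\sqcup B\to B$, together with Lemma~\ref{lemma:2.03}), but the organization differs. The paper introduces the intermediate perturbation $\mA^L$ (the one for which $\tilde D^L+\mA^L$ is invertible over all of $B\times[0,1]$) and applies Proposition~\ref{prop:2.01} \emph{three} times: once along the path with perturbations $\mA_0^L$, $\mA_1^L$ (where the spectral flow vanishes because $\mA^L$ is an invertible perturbation throughout), and once at each endpoint to pass from $\mA_i^L$ to $\mA_{F,i}^{\ker}$; the additivity of higher spectral flow then reassembles the two endpoint spectral flows into $\mathrm{sf}_G\{D_0^L+\mA_{F,0}^{\ker},D_1^L+\mA_{F,1}^{\ker}\}$. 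You instead concatenate the three connection paths on the point component and apply Proposition~\ref{prop:2.01} once, so that the single spectral flow term appears directly and the three Chern--Simons pieces arise from additivity of transgression forms rather than from separate applications. Your route is slightly more economical; the paper's decomposition makes more visible \emph{why} no spectral flow is generated along the ``middle'' piece and isolates exactly where each correction term $\wi{\ch}_g(L_i,\nabla_i^{\ker}\oplus\nabla_i^F,\nabla_i^L)$ and $\wi{\ch}_g(\ul{L_0},\ul{L_1})$ comes from. The only point to be careful about in your version is the smoothing of the concatenated path at the junctions, which is harmless since the Chern--Simons class and the higher spectral flow are insensitive to such reparametrizations.
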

\begin{proof}
	For $i=0, 1$, 
	let $\nabla_i^{\mathcal{E},L}=\nabla_i^{\mathcal{E}}\oplus 
	\nabla_i^{L}$.
	Denote by $\mA^L_i=\mA^L|_{B\times \{i\}}$.
	Set 
	\begin{align}
	\widetilde{\mathbb{B}}_{u^2,i}^L:=u(D_i^{L}+\chi(u)\mA^L_i)+
	\nabla_i^{\mathcal{E},L}
	-\frac{c(T_i)}{4u}.
	\end{align}
Let $\tilde{\eta}_{g,i}^{H}(\pi',\mA^L_i)$ be corresponding equivariant eta forms.
Since $\mathrm{sf}_G\{D_0^L+\mA_{0}^{L}, D_1^L+\mA^L_1\}=0$,
from Proposition \ref{prop:2.01}, modulo exact forms, we have
	\begin{multline}\label{eq:2.14}
\tilde{\eta}_{g,1}^{H}(\pi',\mA^L_1)-\tilde{\eta}_{g,0}^{H}(\pi',\mA^L_0)=\int_{X^g}\widehat{\mathrm{A}}_g(TX, \nabla_0^{TX}, \nabla_1^{TX})
\, \ch_g(E, \nabla_0^{E})
\\
+\int_{X^g}\widetilde{\widehat{\mathrm{A}}}_g(TX, \nabla_1^{TX} )\, \widetilde{\ch}_g(E, \nabla_0^{E},\nabla_1^{E})
-\wi{\ch}_g(\ul{L_0}, \ul{L_1}),
\end{multline} 
and 
	\begin{multline}\label{eq:2.15}
\tilde{\eta}_{g,i}^{H}(\pi',\mA^L_i)-\tilde{\eta}_{g,i}^{H}(\pi',\mA_{F,i}^{\ker})=-\wi{\ch}_g(L_i, \nabla_i^{\ker}\oplus\nabla_i^{F}, \nabla_i^L)
\\
+\ch_g(\mathrm{sf}_G\{D_i^L+\mA_{F,i}^{\ker}, D_i^L+\mA^L_i\}).
\end{multline} 

Since 
\begin{multline}
\mathrm{sf}_G\{D^L+\mA_{F,0}^{\ker}, D^L+\mA_{F,1}^{\ker}\}=
\mathrm{sf}_G\{D^L+\mA_{F,0}^{\ker}, D^L+\mA^L_0\}
\\
-\mathrm{sf}_G\{D^L+\mA_{F,1}^{\ker}, D^L+\mA^L_1\},
\end{multline}
Theorem \ref{thm:2.04} follows from (\ref{eq:2.14}), (\ref{eq:2.15})
and Lemma \ref{lemma:2.03}.

The proof of Theorem \ref{thm:2.04} is completed.


\end{proof}

Remark that by \cite[Definition 2.14]{LiuMa20}, 
for any equivariant Chern-Simons form  $\wi{\ch}_g(\cdot)$,
$[0, \wi{\ch}_g(\cdot)]\in\widehat{K}_g^0(B) $.
So by \cite[Proposition 3.1]{LiuMa20},
$\wi{\ch}_g(\cdot)\in 
\ch_g(K_G^1(B))$\footnote{In \cite{LiuMa20}, it is easy to see that Proposition 3.1 holds for any compact Lie
group $G$ when $G$ acts on $B$ trivially.}.
Therefore we obtain Theorem \ref{thm:0.04}.

\

The following lemma will be used in Sections \ref{s04} and \ref{s05}.

\begin{lemma}\label{lemma:2.05}
Assume that $\dim X$ is even and
$\ind (D_X^E)=0\in K_G^0(B)$.
Let $\mA_X$ be a perturbation operator associated with $D_X^E$.
Then there exists $x\in K_G^1(B)$, which can be constructed as an equivariant higher spectral flow explicitly, such that modulo exact forms, for $g\in G$,
\begin{align}
\wi{\eta}_g(\pi,\mA_X)=\tilde{\bar{\eta}}_{g}^{BC}(\pi,E)
+\ch_g(x)	\mod d\Omega^*(B).
\end{align}
\end{lemma}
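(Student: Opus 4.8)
The plan is to reduce Lemma \ref{lemma:2.05} to the already-proven comparison results and the variation formula by a standard perturbation bookkeeping. First I would set up the two competing perturbations of $D_X^E$: on the one hand the perturbation operator $\mA_X$ given in the statement, and on the other hand the explicit perturbation $\mA$ of Proposition \ref{prop:1.05} living on the enlarged fiber bundle $\pi':W\sqcup B\to B$ with auxiliary bundle $(\ker D_+)_-\oplus(\ker D_-)_+$. Since both $D^H+\mA$ and (the pullback of) $D_X^E+\mA_X$ are invertible equivariant families and have vanishing equivariant index, the equivariant Dai-Zhang higher spectral flow $\Sf_G\{D^H+\mA,\, D_X^E+\mA_X\oplus 0\oplus 0\}\in K_G^*(B)$ is defined; call this class $x$ (up to the stabilization needed to make both operators act on the same Hilbert bundle, which is handled exactly as in Section \ref{s03} by adding the bundles $L_\pm$ and $(L_\pm)_\mp$). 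This $x$ is by construction an equivariant higher spectral flow.

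Next I would apply Proposition \ref{prop:2.01} — the variation formula for eta forms with perturbation — to a path in the space of geometric data that keeps $(T^HW, g^{TX}, h^E, \nabla^E)$ fixed but deforms the perturbation operator from $\mA\oplus 0$ (on $\pi'$, with the zero connection contribution from the auxiliary point-fiber bundle, whose eta form vanishes) to $\mA_X$. Along such a path all the local index-density terms $\int_{X^g}\wi{\Ahat}_g(\cdot)\ch_g(\cdot)$ and $\int_{X^g}\Ahat_g(\cdot)\wi{\ch}_g(\cdot)$ vanish because the underlying Riemannian and bundle geometry does not change, so Proposition \ref{prop:2.01} collapses to
\begin{align}
\wi{\eta}_g(\pi,\mA_X)-\wi{\eta}_g^H(\pi',\mA)=\ch_g\big(\Sf_G\{D^H+\mA,\, D_X^E+\mA_X\}\big)=\ch_g(x)\mod d\Omega^*(B).
\end{align}
Combining this with Proposition \ref{prop:1.05}, which gives $\wi{\eta}_g^H(\pi',\mA)=\tilde{\bar{\eta}}_g^{BC}(\pi,E)\mod d\Omega^*(B)$, yields $\wi{\eta}_g(\pi,\mA_X)-\ch_g(x)=\tilde{\bar{\eta}}_g^{BC}(\pi,E)\mod d\Omega^*(B)$, which is exactly the claim. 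One should also note, as in the remark after Proposition \ref{prop:1.03}, that the right-hand side of \eqref{eq:1.24} for $\mA_X$ and for $\mA$ agree modulo the Chern character of $\ker D_X^E$, which is consistent with $\ch_g(x)$ being a closed form representing a class in $\ch_g(K_G^1(B))$.

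The main obstacle is the stabilization/bookkeeping step: $D_X^E+\mA_X$ and $D^H+\mA$ act a priori on different equivariant Hilbert bundles (the latter enlarged by finite-rank bundles), so to define $\Sf_G$ of the pair and to apply Proposition \ref{prop:2.01} one must exhibit an explicit equivariant homotopy through invertible perturbed operators on a common stabilized bundle, exactly as was done with $L_\pm$, $(L_\pm)_\mp$ and $F_{\pm,i}$ in the proof of Theorem \ref{thm:2.04}. The content is routine once that setup is in place, but writing $x$ down explicitly as a higher spectral flow — rather than merely as an abstract $K_G^1$-class making the identity hold — requires carefully tracking which finite-rank corrections are inserted, and verifying that the resulting class is independent (modulo exact forms, after $\ch_g$) of the choices of $\mA$, of the cut-off function, and of the stabilizing bundles. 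I would also record that when $\ker D_X^E$ itself is already such that $D_X^E+P^{\ker}$-type modifications stay invertible along the relevant homotopies, $x$ can be taken to be the naive difference class, recovering the non-equivariant family-APS picture.
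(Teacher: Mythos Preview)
Your overall strategy is the same as the paper's: compare two perturbed eta forms via Proposition~\ref{prop:2.01} (with fixed geometry, so the local terms drop out) and identify one of them with $\tilde{\bar{\eta}}_g^{BC}(\pi,E)$ via Proposition~\ref{prop:1.05}. However, there is a genuine gap in your execution. On the enlarged Hilbert bundle $\mH\oplus(\ker D_+)_-\oplus(\ker D_-)_+$ the operator you write, $D_X^E+\mA_X\oplus 0\oplus 0$, is \emph{not} invertible: its kernel is exactly the added finite-rank piece $(\ker D_+)_-\oplus(\ker D_-)_+$. Hence it is not a perturbation operator, the higher spectral flow you want is undefined, and Proposition~\ref{prop:2.01} cannot be applied. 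Your proposed repair via the $L_\pm$ machinery of Theorem~\ref{thm:2.04} is not directly applicable either, since that construction is tied to a family over $B\times[0,1]$ with varying geometry, whereas here everything is fixed. More to the point, to put an odd invertible endomorphism on $(\ker D_+)_-\oplus(\ker D_-)_+$ you would need $\ker D_+\simeq\ker D_-$, which is not granted by $\ind(D_X^E)=0$ (only stable equivalence is).

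The paper's fix is to use the hypothesis $\ind(D_X^E)=0$ more directly: choose $n$ with $\ker D_+\oplus\varepsilon_+^n\simeq\ker D_-\oplus\varepsilon_-^n$ and take the auxiliary bundle over $B\times\{\mathrm{pt}\}$ to be the \emph{trivial} bundles $\varepsilon_\pm^n$ with trivial connection, rather than $(\ker D_X^E)^{op}$. This buys two things at once. First, $\mA_X$ extends to an invertible perturbation $\mA_\varepsilon=\mA_X\oplus\bigl(\begin{smallmatrix}0&1\\1&0\end{smallmatrix}\bigr)$ on $\mathcal{E}\oplus\varepsilon_+^n\oplus\varepsilon_-^n$, and because the added piece is trivially flat the extra eta contribution vanishes, giving $\tilde\eta_g^H(\pi',\mA_\varepsilon)=\tilde\eta_g(\pi,\mA_X)$ modulo exact forms. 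Second, the stable isomorphism lets one define a swap $\mA_\varepsilon^{\ker}$ on $(\ker D_+\oplus\varepsilon_+^n)\oplus(\ker D_-\oplus\varepsilon_-^n)$, and the \emph{proof} of Proposition~\ref{prop:1.05} (not just its statement) yields $\tilde\eta_g^H(\pi',\mA_\varepsilon^{\ker})=\tilde{\bar\eta}_g^{BC}(\pi,E)$. Now both perturbations live on the same enlarged bundle, Proposition~\ref{prop:2.01} applies, and $x=\Sf_G\{D_X^H+\mA_\varepsilon^{\ker},D_X^H+\mA_\varepsilon\}$ is the explicit higher spectral flow.
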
\begin{proof}
Since $\ind (D_X^E)=0\in K_G^0(B)$,
by \cite[Proposition 2.4]{Se68},
there exists $n\in \N$, such that
$\ker D_{+}\oplus \varepsilon_+^n\simeq \ker D_{-}\oplus \varepsilon_-^n$,
where $\varepsilon_{\pm}^n$ are equivariant trivial $n$-dimensional
complex vector bundles over $B$ with $\Z_2$-grading.  
We identify $\ker D_{+}\oplus \varepsilon_+^n $ and $\ker D_{-}\oplus \varepsilon_-^n $ 
via this isomorphism.
We consider $\varepsilon_{\pm}^n$ as vector bundles over
$B\times \{\mathrm{pt} \}$ with trivial connections.

Let $P_{\varepsilon}: \mathcal{E}\oplus  \varepsilon_+^n\oplus \varepsilon_-^n 
\to \ker D_X^E\oplus  \varepsilon_+^n\oplus \varepsilon_-^n$ be the projection.
Let 
\begin{align}
\mA_{\varepsilon}^{\ker}=\left(
\begin{array}{cc}
0&1\\
1&0
\end{array}
\right)\circ P_{\varepsilon}
\end{align}
on $\left(\ker D_{+}\oplus \varepsilon_+^n \right)\oplus 
\left(\ker D_{-}\oplus \varepsilon_-^n \right) $.
Let 
\begin{align}
\mA_{\varepsilon}=\left(
\begin{array}{ccc}
\mA_{X}&0&0\\
0&0&1\\
0&1&0
\end{array}
\right)\circ P_{\varepsilon}
\end{align}
on $\mathcal{E}\oplus  \varepsilon_+^n\oplus \varepsilon_-^n$.
Then $\mA_{\varepsilon}^{\ker}$ and $\mA_{\varepsilon}$
are perturbation operators associated with the fiberwise Dirac operator
$D^{H}_X=D_{X}^E\oplus 0\oplus 0$ on $\mathcal{E}\oplus  \varepsilon_+^n\oplus \varepsilon_-^n$.
Let $\wi{\eta}_{g}^H(\pi',\mA_{\varepsilon}^{\ker})$ and $\wi{\eta}_{g}^H(\pi', \mA_{\varepsilon})$
be corresponding equivariant eta forms with perturbation.
By the proof of Proposition \ref{prop:1.05},
we have
\begin{align}\label{eq:2.20}
\wi{\eta}_{g}^H(\pi',\mA_{\varepsilon}^{\ker})=\tilde{\bar{\eta}}_{g}^{BC}(\pi,E)	\mod d\Omega^*(B).
\end{align}
From (\ref{eq:2.10}) and (\ref{eq:2.11}), since the connections
on $\varepsilon_{\pm}^n$ are trivial, modulo exact forms, we have
\begin{align}
\wi{\eta}_g(\pi,\mA_X)
=\wi{\eta}_{g}^H(\pi', \mA_{\varepsilon}).
\end{align}
From Proposition \ref{prop:2.01}, modulo exact forms,
we have
\begin{align}\label{eq:2.22}
\wi{\eta}_{g}^H(\pi', \mA_{\varepsilon})-\wi{\eta}_{g}^H(\pi',\mA_{\varepsilon}^{\ker})=\ch_g(\mathrm{sf}\{D^{H}_X+\mA_{\varepsilon}^{\ker}, D^{H}_X+\mA_{\varepsilon}\}).
\end{align}
So from (\ref{eq:2.20})-(\ref{eq:2.22}), modulo exact forms,
we have
\begin{align}
\wi{\eta}_g(\pi,\mA_X)-\ch_g(\mathrm{sf}\{D^{H}_X+\mA_{\varepsilon}^{\ker}, D^{H}_X+\mA_{\varepsilon}\})=\tilde{\bar{\eta}}_{g}^{BC}(\pi,E)	\mod d\Omega^*(B).
\end{align}

The proof of Lemma \ref{lemma:2.05} is completed.
\end{proof}


\section{Embedding formula}\label{s04}

In this section, we will prove Theorem \ref{thm:0.05}.
We use the notations and the assumptions in Section \ref{s0004}.

Let $\xi=\xi_+\oplus\xi_-$ be the $\Z_2$-graded vector bundle
over $W$.
From \cite[Proposition 3.6]{Embed}, if $\ind D_Y^{\mu}=0\in K_G^*(B)$,
then $\ind D_X^{\xi}=0\in K_G^*(B)$. Moreover, we have the following 
proposition, which is the equivariant family extension of the main result in \cite{BZ93}.

\begin{prop}\label{prop:3.01}\cite[Theorem 3.7]{Embed}
	Assume that $\ind D_Y^{\mu}=0\in K_G^*(B)$.
Let $\mA_Y$, $\mA_X$ be perturbation operators associated with $D_Y^{\mu}$, $D_X^{\xi}$ respectively. Then there exists a
perturbation operator $\mA_{\mA_Y,X}$ associated with $D_X^{\xi}$,
depending on $\mA_Y$,
such that modulo exact forms on $B$, for $g\in G$,
\begin{multline}
\tilde{\eta}_g(\pi_X, \mathcal{A}_X)=\tilde{\eta}_g(\pi_Y, 
\mathcal{A}_Y)+\int_{X^g}\widehat{\mathrm{A}}_g(TX, 
\nabla^{TX})\,\gamma_g^X(\mF_Y,\mF_X)
		\\
		+\int_{Y^g}\widetilde{\widehat{\mathrm{A}}}_g(
		\nabla^{TY,N},\nabla^{TX|_{W^g}})
		\,\widehat{\mathrm{A}}_g^{-1}(N,\nabla^{N})\,
		\ch_g(\mu,\nabla^{\mu})
\\
+\ch_g(\mathrm{sf}_G\{D_X^{\xi}+\mA_{\mA_Y,X}, D_X^{\xi} 
+\mA_{X}\})
.
\end{multline}
\end{prop}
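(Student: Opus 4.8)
The plan is to follow the Bismut--Zhang localization technique of \cite{BZ93}, transplanted to equivariant eta forms with perturbation, while extracting the spectral flow term directly from the anomaly formula already at our disposal. The key preliminary observation is that the given perturbation operator $\mA_X$ enters the statement only through the last term. Indeed, once a distinguished perturbation operator $\mA_{\mA_Y,X}$ associated with $D_X^{\xi}$ (built canonically out of $\mA_Y$) has been produced, Proposition \ref{prop:2.01} applied to the fibration $\pi_X$ with \emph{unchanged} geometric data but the two perturbation operators $\mA_X$ and $\mA_{\mA_Y,X}$ yields, modulo exact forms,
\begin{align}\label{eq:plan-a}
\tilde{\eta}_g(\pi_X,\mA_X)-\tilde{\eta}_g(\pi_X,\mA_{\mA_Y,X})=\ch_g\big(\mathrm{sf}_G\{D_X^{\xi}+\mA_{\mA_Y,X},\,D_X^{\xi}+\mA_X\}\big),
\end{align}
because the two Chern--Simons densities in (\ref{D07}) vanish when the metric on $TX$ and the connection on $\xi$ are left fixed. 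So it suffices to prove the embedding identity for the single distinguished choice $\mA_X=\mA_{\mA_Y,X}$.

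For that choice I would carry out the Bismut--Zhang deformation. Recall that $\ul{\xi_\pm}$ is the equivariant Atiyah--Hirzebruch direct image of $\ul{\mu}$: over a tubular neighborhood of $W$ in $V$, identified with a neighborhood of the zero section of the fiberwise normal bundle $N$ (which, the codimension being even and $TX$, $TY$ spin, is itself spin), the $\Z_2$-graded bundle $\xi=\xi_+\oplus\xi_-$ is modeled on the spinor bundle of $N$ tensored with the pullback of $\mu$, and it carries the canonical odd endomorphism $c(v)+c(v)^*$, Clifford multiplication by the tautological section $v$ of the pullback of $N$, whose zero set is exactly $W$. Introduce the rescaled Bismut superconnection with a deformation parameter $T\in[0,\infty)$,
\begin{align}\label{eq:plan-b}
\mathbb{B}_{u,T}:=\mathbb{B}_u^{\xi}+\sqrt{u}\,T\big(c(v)+c(v)^*\big)+\sqrt{u}\,\chi(\sqrt{u})\,\mA_{T},
\end{align}
where $\mA_T$ is a family of perturbation operators with $\mA_0=\mA_X$; the distinguished operator $\mA_{\mA_Y,X}$ is, by construction, the one produced from $\mA_Y$ by following this deformation to $T=\infty$. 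For fixed $T$ the associated eta form is well defined by Definition \ref{defn:1.02}, and, exactly as in the proof of Proposition \ref{prop:1.03}, I would transgress in the two variables $(u,T)$: the $u$-derivative is a total derivative whose boundary contributions are the local index density and an equivariant higher spectral flow, while the $T$-derivative is an exact form up to a boundary term arising from $T\to\infty$.

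The core of the argument, and the step I expect to be the main obstacle, is the analysis of the limit $T\to\infty$. One must establish heat-supertrace estimates uniform in $T$ ensuring that $\lim_{T\to\infty}$ commutes with $\int_0^\infty(\cdots)\,du$ (the analogue, with the extra parameter $T$, of \cite[(2.72), (2.77)]{Liu17} and of the $O(u^{-1})$ estimates used in (\ref{eq:1.16})--(\ref{eq:1.23})), and then identify the limiting fiberwise model. The potential $T^2|v|^2+Tc(\nabla v)$ inside $\mathbb{B}_{u,T}^2$ confines the relevant part of the spectrum to $W^g$; after the Bismut--Zhang normal rescaling, the limiting operator on $W^g$ factors as a harmonic oscillator in the $N$-directions tensored with the rescaled Bismut superconnection of $(\pi_Y,\mu)$. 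Applying Mehler's formula \cite{BGV04} to the oscillator produces the factor $\widehat{\mathrm{A}}_g^{-1}(N,\nabla^N)$, and the mismatch between $\nabla^{TX|_{W^g}}$ and $\nabla^{TY,N}$ is absorbed into the Chern--Simons term $\widetilde{\widehat{\mathrm{A}}}_g(\nabla^{TY,N},\nabla^{TX|_{W^g}})$; projection onto the oscillator ground state replaces the $\xi$-data by the $\mu$-data, so what survives from the $W^g$-contribution is precisely $\tilde{\eta}_g(\pi_Y,\mA_Y)$. Finally, the transgression current interpolating between the Mathai--Quillen Thom form that represents $\ch_g(\xi_+,\nabla^{\xi_+})-\ch_g(\xi_-,\nabla^{\xi_-})$ and the current of integration along $W$ --- whose $d$-primitive is the equivariant Bismut--Zhang current $\gamma_g^X(\mF_Y,\mF_X)$ of \cite{Embed} --- contributes the term $\int_{X^g}\widehat{\mathrm{A}}_g(TX,\nabla^{TX})\,\gamma_g^X(\mF_Y,\mF_X)$. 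Assembling these with (\ref{eq:plan-a}) gives the proposition.

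One remaining point is to check that $\mA_{\mA_Y,X}$ is indeed a perturbation operator, i.e.\ that $D_X^{\xi}+\mA_{\mA_Y,X}$ is fiberwise invertible and has the correct commutation with the $\Z_2$-grading; this follows because $D_Y^{\mu}+\mA_Y$ is invertible and the oscillator contributes a strictly positive spectrum in the normal directions, but making it rigorous requires the same localization estimates as above. As a consistency check one can apply $d^B$ to both sides: using (\ref{eq:1.24}) for $\pi_X$ and for $\pi_Y$ together with the current equation defining $\gamma_g^X$, the identity collapses to the Bismut--Zhang current theorem for the direct image, which is a good sanity test that the degree counts and sign conventions have been kept straight.
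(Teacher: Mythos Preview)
The paper does not give its own proof of this proposition: it is quoted verbatim from \cite[Theorem~3.7]{Embed}, and the only additional content in this paper is the remark immediately following it, namely that the totally geodesic hypothesis assumed in \cite{Embed} can be removed a posteriori by invoking the variation formula Proposition~\ref{prop:2.01}.

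Your proposal is a faithful outline of the argument in \cite{Embed}. The reduction in your~(\ref{eq:plan-a}) via Proposition~\ref{prop:2.01} to a single distinguished perturbation $\mA_{\mA_Y,X}$ is precisely the mechanism the paper's remark alludes to, and the remainder of your sketch --- the one-parameter deformation $\mathbb{B}_{u,T}$ by Clifford multiplication with the tautological normal section, localization to $W^g$ as $T\to\infty$, the harmonic-oscillator/Mehler computation producing $\widehat{\mathrm{A}}_g^{-1}(N,\nabla^N)$, and the emergence of the equivariant Bismut--Zhang current $\gamma_g^X$ as the transgression between the Mathai--Quillen Thom form and the integration current along $W$ --- is exactly the strategy of \cite{Embed}, which in turn transplants \cite{BZ93} to the equivariant family setting with perturbation. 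One small inconsistency: in your~(\ref{eq:plan-b}) you set $\mA_0=\mA_X$, but the distinguished operator $\mA_{\mA_Y,X}$ must be built from $\mA_Y$ alone (it cannot depend on the arbitrary $\mA_X$, or the spectral-flow term would be circular); in \cite{Embed} one constructs $\mA_{\mA_Y,X}$ directly from $\mA_Y$ via the localization, and then compares with $\mA_X$ afterwards, exactly as your~(\ref{eq:plan-a}) does. Apart from this expository slip, your plan matches the cited source.
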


Remark that in \cite{Embed}, the author assumed that the embedding
is totally geodesic. In fact, this condition is not necessary 
because we can use the variation formula Proposition \ref{prop:2.01}
to obtain the formula for the general embedding from the 
totally geodesic case. 

\

\textbf{Case 1}: $\dim X$ is odd. 

\

In this case, $\dim Y$ is also odd.
Let $P_Y^{\ker}$ and $P_X^{\ker}$ be orthogonal projections
onto $\ker D_Y^{\mu}$ and $\ker D_X^{\xi}$ respectively.
So from Proposition \ref{prop:1.03}, we have
\begin{align}
\begin{split}
&	\tilde{\eta}_g(\pi_Y,P_Y^{\ker})=\tilde{\bar{\eta}}^{BC}_g(\pi_Y, \mu) \mod d\Omega^*(B),
\\
&	\tilde{\eta}_g(\pi_X,P_X^{\ker})=\tilde{\bar{\eta}}^{BC}_g(\pi_X, \xi) \mod d\Omega^*(B).
	\end{split}
\end{align}

\begin{thm}
	If $\dim X$ is odd,
	modulo exact forms on $B$, for $g\in G$,
\begin{multline}
\tilde{\bar{\eta}}^{BC}_g(\pi_X, \xi)=\tilde{\bar{\eta}}^{BC}_g(\pi_Y, \mu)+\int_{X^g}\widehat{\mathrm{A}}_g(TX, 
\nabla^{TX})\,\gamma_g^X(\mF_Y,\mF_X)
\\
+\int_{Y^g}\widetilde{\widehat{\mathrm{A}}}_g(
\nabla^{TY,N},\nabla^{TX|_{W^g}})
\,\widehat{\mathrm{A}}_g^{-1}(N,\nabla^{N})\,
\ch_g(\mu,\nabla^{\mu})
\\
+\ch_g(\mathrm{sf}_G\{D_X^{\xi}+\mA_{P_Y^{\ker},X}, D_X^{\xi} 
+P_X^{\ker}\})
.
\end{multline}
\end{thm}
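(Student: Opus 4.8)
The plan is to combine the comparison results of Section \ref{s02} with the embedding formula for equivariant eta forms with perturbation (Proposition \ref{prop:3.01}). Since $\dim X$ is odd, both $\dim X$ and $\dim Y$ are odd, so the kernels $\ker D_Y^{\mu}$ and $\ker D_X^{\xi}$ are (ungraded) equivariant vector bundles over $B$, and by the argument at the start of Case 1 of Section \ref{s03} the orthogonal projections $P_Y^{\ker}$ and $P_X^{\ker}$ are genuine perturbation operators with $\ind D_Y^{\mu}=\ind D_X^{\xi}=0\in K_G^1(B)$. Thus Proposition \ref{prop:1.03} applies to both fibrations and gives the two identities displayed just before the theorem, identifying $\tilde{\eta}_g(\pi_Y,P_Y^{\ker})$ with $\tilde{\bar\eta}^{BC}_g(\pi_Y,\mu)$ and $\tilde{\eta}_g(\pi_X,P_X^{\ker})$ with $\tilde{\bar\eta}^{BC}_g(\pi_X,\xi)$, both modulo exact forms on $B$.

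Next I would apply Proposition \ref{prop:3.01} with the specific choices $\mA_Y=P_Y^{\ker}$ and $\mA_X=P_X^{\ker}$. This produces a perturbation operator $\mA_{P_Y^{\ker},X}$ on $D_X^{\xi}$ (depending on $P_Y^{\ker}$) and the identity
\begin{multline*}
\tilde{\eta}_g(\pi_X, P_X^{\ker})=\tilde{\eta}_g(\pi_Y, P_Y^{\ker})+\int_{X^g}\widehat{\mathrm{A}}_g(TX,\nabla^{TX})\,\gamma_g^X(\mF_Y,\mF_X)
\\
+\int_{Y^g}\widetilde{\widehat{\mathrm{A}}}_g(\nabla^{TY,N},\nabla^{TX|_{W^g}})\,\widehat{\mathrm{A}}_g^{-1}(N,\nabla^{N})\,\ch_g(\mu,\nabla^{\mu})+\ch_g(\mathrm{sf}_G\{D_X^{\xi}+\mA_{P_Y^{\ker},X}, D_X^{\xi}+P_X^{\ker}\})
\end{multline*}
modulo exact forms. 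Substituting the two comparison identities from the previous paragraph into the left side and the first term on the right side yields exactly the claimed formula, with the spectral flow term $\ch_g(\mathrm{sf}_G\{D_X^{\xi}+\mA_{P_Y^{\ker},X}, D_X^{\xi}+P_X^{\ker}\})$ playing the role of the hidden mod-$\Z$ term in the Bismut-Zhang embedding formula.

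The bookkeeping point to watch is the notation: the $\Z_2$-graded bundle $\xi=\xi_+\oplus\xi_-$ means $\tilde{\bar\eta}^{BC}_g(\pi_X,\xi)=\tilde{\bar\eta}^{BC}_g(\pi_X,\xi_+)-\tilde{\bar\eta}^{BC}_g(\pi_X,\xi_-)$ by the convention fixed after Definition \ref{defn:1.01}, and likewise $D_X^\xi$, $\ker D_X^\xi$, $P_X^{\ker}$ are to be read in this graded sense; Proposition \ref{prop:1.03} is applied to each summand and the difference taken. I would also remark that the hypothesis of Proposition \ref{prop:3.01} that the family index vanishes is met here precisely because $\ker D_X^\xi$ and $\ker D_Y^\mu$ form equivariant vector bundles in the odd-dimensional case. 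Since all the geometric terms on the right-hand side are identical to those in Proposition \ref{prop:3.01}, there is essentially no analytic obstacle remaining: the main work has already been carried out in Proposition \ref{prop:1.03} (the comparison of the two eta forms) and Proposition \ref{prop:3.01} (the embedding formula with perturbation). The only genuine subtlety is ensuring that the comparison of Proposition \ref{prop:1.03} is compatible with the particular choice of perturbation data used in Proposition \ref{prop:3.01}, i.e. that we are free to take $\mA_Y = P_Y^{\ker}$ and $\mA_X = P_X^{\ker}$, which is exactly what the observation at the beginning of this Case guarantees.
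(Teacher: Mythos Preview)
Your proposal is correct and follows essentially the same approach as the paper: apply Proposition~\ref{prop:1.03} to both fibrations to identify the perturbed eta forms $\tilde{\eta}_g(\pi_Y,P_Y^{\ker})$ and $\tilde{\eta}_g(\pi_X,P_X^{\ker})$ with the reduced Bismut--Cheeger eta forms, then substitute into Proposition~\ref{prop:3.01} with $\mA_Y=P_Y^{\ker}$ and $\mA_X=P_X^{\ker}$. The paper presents this even more tersely (it simply records the two comparison identities and states the theorem), but your additional remarks on the $\Z_2$-graded bookkeeping and on why the index-vanishing hypothesis of Proposition~\ref{prop:3.01} is met are accurate and helpful.
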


\

\textbf{Case 2}: $\dim X$ is even. 

\

In this case, $\dim Y$ is even.
We denote by $D_Y^{\mu}=D_{Y,+}\oplus D_{Y,-}$.
As in the setting of Proposition \ref{prop:1.05},
we add $B\times \{\mathrm{pt}\}$ and consider the embedding $W\sqcup B\to V\sqcup B$.
We denote by $(\ker D_{Y,-})_+$ and $ (\ker D_{Y,+})_-$
the vector bundles $\ker D_{Y,-}$ and $ \ker D_{Y,+}$
over $B\times \{\mathrm{pt}\}$ with the inverse grading
and assume that restricted on $B\times \{\mathrm{pt}\}$, the embedding is the identity map. 
This new embedding also forms an equivariant version of the Atiyah-Hirzebruch
direct image from $\mu\sqcup (\ker D_Y^{\mu})^{op}$ to
$\xi\sqcup (\ker D_Y^{\mu})^{op}$.

Let $\pi_W':W\sqcup B\to B$ be the fiber bundle induced from $\pi_W$.
Let $P^Y$ be the projection onto $\ker D_Y^{\mu}\oplus (\ker D_Y^{\mu})^{op}$.
Set 
\begin{align}
\mA_Y=\left(
\begin{array}{cc}
0&1\\
1&0
\end{array}
\right)\circ P^Y
\end{align}
on $\big(\ker D_{Y,+}\oplus(\ker D_{Y,-})_+\big)\oplus 
\big((\ker D_{Y,+})_-\oplus \ker D_{Y,-}\big)$.
Then
from Proposition \ref{prop:1.05}, modulo exact forms on $B$,
\begin{align}\label{eq:3.06}
\wi{\eta}_{g}^H(\pi_W',\mA_Y)=\tilde{\bar{\eta}}^{BC}_g(\pi_W, \mu).
\end{align}

Let $\pi_V':V\sqcup B\to B$ be the fiber bundle induced from $\pi_V$.
Let $\tilde{\eta}^{BC}_g(\pi_V', \xi\sqcup (\ker D_Y^{\mu})^{op})$ be the equivariant
Bismut-Cheeger eta form associated with this fiber bundle.
Since the equivariant Bismut-Cheeger eta form vanishes when the fiber is a point, as in (\ref{eq:1.30}),  we have
\begin{align}
\tilde{\eta}^{BC}_g(\pi_V, \xi)=
\tilde{\eta}^{BC}_g(\pi_V', \xi\sqcup (\ker D_Y^{\mu})^{op}).
\end{align}

Let $D^{H,\mu}$ and $D^{H,\xi}$ be fiberwise Dirac operators associated with $\pi_W'$ and $\pi_V'$. 
Since $\ind(D^{H,\mu})=0
\in K_G^0(B)$, $\ind(D^{H,\xi})=0\in K_G^0(B)$. 
Let $\mA_X$ be a perturbation operator associated with $D^{H,\xi}$.
From Lemma \ref{lemma:2.05}, there exists $x\in K_G^1(B)$, such that modulo exact forms,  
\begin{align}\label{eq:3.08}
\tilde{\eta}^{BC}_g(\pi_V', \xi\sqcup (\ker D_Y^{\mu})^{op})=\wi{\eta}_g(\pi_V',\mA_X)-\ch_g(x).
\end{align}
From (\ref{eq:0.10}), (\ref{eq:3.06})-(\ref{eq:3.08}) and Proposition \ref{prop:3.01},
we obtain Theorem \ref{thm:0.05} when $\dim X$ is even.

\begin{thm}
If $\dim X$ is even,
there exists $x\in K_G^1(B)$ such that modulo exact forms on $B$,
for $g\in G$,
\begin{multline}
\tilde{\bar{\eta}}^{BC}_g(\pi_V,\xi_+)-\tilde{\bar{\eta}}^{BC}_g(\pi_V, \xi_-)
=\tilde{\bar{\eta}}^{BC}_g(\pi_W,\mu)+\int_{X^g}\Ahat_g(TX, \nabla^{TX})\gamma_g(Y^g,X^g)
\\
+\int_{Y^g}\wi{\Ahat}_g(TY, \nabla^{TY,N},\nabla^{TX|_{W^g}})
\Ahat^{-1}_g(N,\nabla^N)\ch_g(E,\nabla^E)+\ch_g(x).
\end{multline}
\end{thm}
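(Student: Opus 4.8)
The plan is to deduce the statement from the embedding formula for equivariant eta forms with perturbation (Proposition \ref{prop:3.01}) by passing from the reduced Bismut--Cheeger eta forms to the perturbed ones via the comparison results of Section \ref{s02}; the stabilisation by the point fibration $B\times\{\mathrm{pt}\}$ is precisely the device that forces the relevant families indices to vanish, so that the perturbed eta forms and the equivariant higher spectral flows are defined.

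First I would rewrite the left-hand side. Since $\dim X$, hence $\dim Y$, is even, (\ref{eq:0.10}) gives $\tilde{\bar{\eta}}^{BC}_g(\pi_V,\xi_{\pm})=\tilde{\eta}^{BC}_g(\pi_V,\xi_{\pm})$, so the left-hand side equals $\tilde{\eta}^{BC}_g(\pi_V,\xi)$ with $\xi=\xi_+\oplus\xi_-$. Adjoining the trivial point fibration carrying $(\ker D_Y^{\mu})^{op}$ does not change the Bismut--Cheeger eta form (the eta form of a point fibre vanishes), which is (\ref{eq:3.07}), so the left-hand side becomes $\tilde{\eta}^{BC}_g(\pi_V',\xi\sqcup(\ker D_Y^{\mu})^{op})$. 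The gain is that now $\ind D^{H,\mu}=0$ in $K_G^0(B)$ --- the opposite-graded kernels were added precisely to cancel $\ind D_Y^{\mu}$ --- and hence $\ind D^{H,\xi}=0$ in $K_G^0(B)$ by the index compatibility of the equivariant Atiyah--Hirzebruch direct image. Thus a perturbation operator $\mA_X$ for $D^{H,\xi}$ exists, and Lemma \ref{lemma:2.05} furnishes a class $x_0\in K_G^1(B)$, realised explicitly as an equivariant higher spectral flow, with $\tilde{\eta}^{BC}_g(\pi_V',\xi\sqcup(\ker D_Y^{\mu})^{op})=\wi{\eta}_g(\pi_V',\mA_X)-\ch_g(x_0)$ modulo exact forms; this is (\ref{eq:3.08}).

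Next I would apply Proposition \ref{prop:3.01} to the equivariant embedding $W\sqcup B\hookrightarrow V\sqcup B$ of fibrations over $B$, with the swap operator $\mA_Y$ on the $W$-side and $\mA_X$ on the $V$-side. This writes $\wi{\eta}_g(\pi_V',\mA_X)$ as $\wi{\eta}_g^H(\pi_W',\mA_Y)$, plus the $\Ahat$--current--Chern--Simons geometric terms, plus $\ch_g$ of the equivariant higher spectral flow $\mathrm{sf}_G\{D^{H,\xi}+\mA_{\mA_Y,X},D^{H,\xi}+\mA_X\}$, where $\mA_{\mA_Y,X}$ is the perturbation operator produced by the proposition. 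The fixed-point integrals only see the fibres $X$ and $Y$ of $\pi_V$ and $\pi_W$ (the point fibre contributes nothing), so these geometric terms are exactly $\int_{X^g}\Ahat_g(TX,\nabla^{TX})\gamma_g(Y^g,X^g)$ and $\int_{Y^g}\wi{\Ahat}_g(TY,\nabla^{TY,N},\nabla^{TX|_{W^g}})\Ahat^{-1}_g(N,\nabla^N)\ch_g(\mu,\nabla^{\mu})$ of the statement, once $\gamma_g^X(\mF_Y,\mF_X)$ is identified with the equivariant Bismut--Zhang current $\gamma_g(Y^g,X^g)$. Then (\ref{eq:3.06}), a consequence of Proposition \ref{prop:1.05}, identifies $\wi{\eta}_g^H(\pi_W',\mA_Y)$ with $\tilde{\bar{\eta}}^{BC}_g(\pi_W,\mu)$ modulo exact forms. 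Chaining these four identities and using additivity of $\ch_g$ on $K_G^1(B)$, the theorem follows with $x:=\mathrm{sf}_G\{D^{H,\xi}+\mA_{\mA_Y,X},D^{H,\xi}+\mA_X\}-x_0\in K_G^1(B)$.

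The only real obstacle is the one that the construction is designed to remove: a priori $\ind D_X^{\xi}$ need not vanish, which blocks the very definition of the perturbed eta form --- adjoining $B\times\{\mathrm{pt}\}$ and invoking Lemma \ref{lemma:2.05} is exactly what circumvents this, after which no new analytic input is required beyond Propositions \ref{prop:1.05}, \ref{prop:2.01} and \ref{prop:3.01}. The remaining work is careful bookkeeping: verifying that the various $\Z_2$-gradings and opposite-grading flips are consistent so that the geometric terms coming out of Proposition \ref{prop:3.01} are literally those in (\ref{eq:0.11}) (in particular the normal-bundle factor $\Ahat^{-1}_g(N,\nabla^N)$ and the current identification), and checking that the resulting class $x$ is independent, modulo exact forms, of the auxiliary choices --- the connecting geometric data, the perturbation operators $\mA_X$, $\mA_Y$, and the stabilising bundles.
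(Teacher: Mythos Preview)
Your proposal is correct and follows essentially the same route as the paper: stabilise by the point fibration carrying $(\ker D_Y^{\mu})^{op}$ to force the family indices to vanish, identify the $W$-side perturbed eta form with $\tilde{\bar{\eta}}^{BC}_g(\pi_W,\mu)$ via Proposition \ref{prop:1.05}, identify the $V$-side Bismut--Cheeger eta form with a perturbed one via Lemma \ref{lemma:2.05}, and then invoke Proposition \ref{prop:3.01}. Your final remark about independence of $x$ from auxiliary choices is not needed, since the statement only asserts existence of some $x\in K_G^1(B)$.
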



\section{Functoriality}\label{s05}

In this section, we will prove Theorem \ref{thm:0.06}. We use the notations and the assumptions
in Section \ref{s0005}.

From \cite[Lemma 3.6]{Liu21},
if $\Ind D_X^E=0\in K_G^*(V)$, then $\Ind D_Z^E=0\in K_G^*(B)$.
In this case, the functoriality of the equivariant eta forms with 
perturbation was obtained in \cite{Liu21}.
\begin{prop}\label{D08}\cite[Theorem 1.3]{Liu21}
	Let $\mA_Z$ and $\mA_X$ be perturbation operators associated with  $D_Z^E$ and $D_X^E$.
	Then there exists $x\in K_G^*(B)$, an equivariant higher spectral 
	flow, such that modulo exact forms on $B$, for $g\in G$,
	\begin{multline}\label{D09}
	\widetilde{\eta}_g(\pi_3, \mA_{Z})=\int_{Y^g}\widehat{\mathrm{A}}_g(TY, \nabla^{TY})\, \widetilde{\eta}_g(\pi_1,\mA_X)
	\\
	+\int_{Z^g}\wi{\widehat{\mathrm{A}}}_g(TZ, \nabla^{TY,TX},\nabla^{TZ})
	\, \ch_g(E, \nabla^{E})
	+ \ch_g(x).
	\end{multline}
	
\end{prop}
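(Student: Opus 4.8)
The plan is to exhibit the composite fibration $\pi_3$ as an adiabatic degeneration of the two-step structure $X\hookrightarrow Z\to Y$ and to extract the formula (\ref{D09}) by a double transgression in the heat parameter $u$ and an adiabatic parameter. Using the splitting $TZ\simeq \pi_1^*TY\oplus TX$, I would rescale the metric in the horizontal-to-$X$ directions, setting $g_\varepsilon^{TZ}=g^{TX}\oplus \varepsilon^{-2}\pi_1^*g^{TY}$ for $\varepsilon\in(0,1]$, and let $\mathbb{B}_{u,\varepsilon}$ be the corresponding rescaled Bismut superconnection for $\pi_3$ with the perturbation $\mA_Z$ inserted as in (\ref{eq:1.09}). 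At $\varepsilon=1$ this reproduces the data defining $\widetilde{\eta}_g(\pi_3,\mA_Z)$, while as $\varepsilon\to 0$ the operator $D_Z^E$ acquires the iterated structure of a $D_Y$-type operator on the bundle of fiberwise $X$-sections coupled to the vertical family $D_X^E$. Observe that the Levi-Civita connection $\nabla^{TZ}$ of the genuine metric differs from the product connection ${}^0\nabla^{TZ}=\pi_1^*\nabla^{TY}\oplus\nabla^{TX}$ that governs the adiabatic picture, and the rescaling interpolates between them.

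I would then form the heat supertrace as a differential form on $B\times(0,\infty)_u\times(0,1]_\varepsilon$ and, as in \cite[Theorem 9.17]{BGV04}, use its closedness under $d^B+du\wedge\partial_u+d\varepsilon\wedge\partial_\varepsilon$ to run Stokes' theorem over the $(u,\varepsilon)$-rectangle, expressing $\widetilde{\eta}_g(\pi_3,\mA_Z)$ as a sum of edge contributions. The $u\to\infty$ edge vanishes because $\ind(D_Z^E)=0$ (from $\ind(D_X^E)=0$ together with \cite[Lemma 3.6]{Liu21}) and $D_Z^E+\mA_Z$ is invertible; the $u\to 0$ edge produces, via the family local index theorem, the equivariant local index density on $Z^g$, whose $\varepsilon$-transgression between the two connection choices is exactly the Chern-Simons form $\wi{\Ahat}_g(TZ,{}^0\nabla^{TZ},\nabla^{TZ})$, yielding the middle term of (\ref{D09}) after pairing with $\ch_g(E,\nabla^E)$.

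The heart of the argument is the $\varepsilon\to 0$ edge, for which the key analytic input is an equivariant, perturbed version of Dai's adiabatic limit theorem \cite{Dai91,BC89}, localized to the fixed-point set $Z^g$. Because the vertical operator $D_X^E+\mA_X$ is uniformly invertible, the small-eigenvalue subtlety of Dai's original analysis does not arise, and a Getzler rescaling in the $Y$-directions shows that this edge converges to $\int_{Y^g}\Ahat_g(TY,\nabla^{TY})\,\widetilde{\eta}_g(\pi_1,\mA_X)$, the inner eta form being governed by the vertical perturbation $\mA_X$. The adiabatic analysis simultaneously singles out a specific perturbation operator $\mA_Z^{\mathrm{ad}}$ for $D_Z^E$ compatible with $\mA_X$, rather than the arbitrary given $\mA_Z$. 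Both are legitimate perturbation operators, so applying the variation formula (Proposition \ref{prop:2.01}) along a path joining them, for which the geometric data is unchanged and hence the $\wi{\Ahat}_g$ and $\wi{\ch}_g$ terms of (\ref{D07}) vanish, shows that $\widetilde{\eta}_g(\pi_3,\mA_Z)-\widetilde{\eta}_g(\pi_3,\mA_Z^{\mathrm{ad}})$ equals, modulo exact forms, $\ch_g$ of the equivariant higher spectral flow $\mathrm{sf}_G\{D_Z^E+\mA_Z^{\mathrm{ad}},D_Z^E+\mA_Z\}$; this class is the element $x\in K_G^*(B)$ in (\ref{D09}).

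The main obstacle is the adiabatic-edge estimate: one must establish the $\varepsilon\to 0$ limit of the heat supertrace uniformly in $u$ over all of $(0,\infty)$, so that the $u$-integration defining the eta form may be interchanged with the $\varepsilon$-limit. This requires uniform Dai-type heat-kernel bounds in the equivariant, perturbed setting, with particular care for how the cutoff $\chi$ and the $\sqrt{u}$-scaling of (\ref{eq:1.09}) interact with the metric rescaling. The invertibility supplied by $\mA_X$ is precisely what furnishes the spectral gap needed for these bounds and removes the finite-dimensional correction term that would otherwise complicate the limit.
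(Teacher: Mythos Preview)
The paper does not supply its own proof of this proposition: it is quoted verbatim as \cite[Theorem 1.3]{Liu21} and used as a black box in Section~\ref{s05}. There is therefore nothing in the present paper to compare your argument against.

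That said, your outline is the standard adiabatic-limit strategy that underlies this class of results (and is indeed the method used in \cite{Liu17,Liu21}, which in turn follow \cite{BC89,Dai91}): rescale the metric in the $Y$-directions, run a double transgression over the $(u,\varepsilon)$-rectangle, identify the $u\to 0$ edge with the local index density and its Chern--Simons transgression, the $u\to\infty$ edge as vanishing by invertibility, and the $\varepsilon\to 0$ edge as the iterated eta form via a finite-propagation/Getzler argument. Your observation that invertibility of $D_X^E+\mA_X$ removes Dai's small-eigenvalue correction is exactly the simplification the perturbation buys, and your use of Proposition~\ref{prop:2.01} to pass between the adiabatically induced perturbation and the arbitrary $\mA_Z$ is the correct mechanism producing the spectral-flow term $x$.

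The one point in your sketch that is genuinely delicate and would need to be made precise is the construction of the perturbation $\mA_Z^{\mathrm{ad}}$: in \cite{Liu21} this is done by first building, from $\mA_X$, an intermediate perturbation for the $Y$-family acting on the (now invertible) $X$-direction, and only then assembling a $Z$-perturbation; the interaction of the two cutoff functions $\chi$ in (\ref{eq:1.09}) at the two levels with the $\varepsilon$-rescaling is where most of the analytic work lies. You have correctly flagged the uniform-in-$u$ heat-kernel bounds as the main obstacle, but a complete proof must also verify that the chosen $\mA_Z^{\mathrm{ad}}$ is bounded (not merely a formal limit) so that it is a legitimate perturbation operator in the sense of \cite[Proposition 3.3]{Liu21}.
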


\

\textbf{Case} 1: $\dim X$ is odd and $\dim Y$ is even. 
Then $\dim Z$ is odd.

\

Since $\ker D_X^E$ and $\ker D_Z^E$ form equivariant vector bundles,  $\ind D_X^E=\ind D_Z^E=0\in K^1_G(B)$.
Let $P^{\ker D_X}$ and $P^{\ker D_Z}$ be orthogonal 
projections on $\ker D_X^E$ and $\ker D_Z^E$.
Then $P^{\ker D_X}$ and $P^{\ker D_Z}$ are perturbation
operators associated with $D_X^E$ and $D_Z^E$ respectively as in Section \ref{s02}.
By Proposition \ref{prop:1.03},
\begin{align}\label{eq:4.02}
\begin{split}
&\tilde{\eta}_g(\pi_1, P^{\ker D_X})=\tilde{\bar{\eta}}^{BC}_g(\pi_1,E)
\mod d\Omega^*(V),
\\
& \tilde{\eta}_g(\pi_3, P^{\ker D_Z})=\tilde{\bar{\eta}}^{BC}_g(\pi_3,E)
\mod d\Omega^*(B).
\end{split}
\end{align}

From Proposition \ref{D08} and (\ref{eq:4.02}), we obtain the functoriality
in this case.

\begin{thm}
	If $\dim X$ is odd and $\dim Y$ is even,
	there exists $x\in K_G^0(B)$, such that
modulo exact forms on $B$, for $g\in G$, 
\begin{multline}\label{eq:4.03}
\tilde{\bar{\eta}}^{BC}_g(\pi_3, E)=
\int_{Y^g}\widehat{\mathrm{A}}_g(TY, \nabla^{TY})\, \tilde{\bar{\eta}}^{BC}_g(\pi_1, E)
\\
+\int_{Z^g}\widetilde{\widehat{\mathrm{A}}}_g(TZ,\nabla^{TY,TX},\nabla^{TZ}  )
\, \ch_g(E, \nabla^{E})
+\ch_g(x).
\end{multline}
\end{thm}

\

\textbf{Case} 2: $\dim X$ is odd and $\dim Y$ is odd. Then $\dim Z$ is even.

\

In this case, $\ind D_X^E=0\in K_G^1(V)$ and $P^{\ker D_X}$ 
is a perturbation operator.
From \cite[Lemma 3.6]{Liu21}, $\ind D_Z^E=0\in K_G^0(B)$.
Let $\mA_Z$ be a perturbation operator with respect to $D_Z^E$.
Then by Lemma \ref{lemma:2.05}, there exists $x\in K_G^1(B)$,
such that modulo exact forms on $B$, for $g\in G$,
\begin{align}\label{eq:4.04}
\wi{\eta}_g(\pi_3,\mA_Z)=\tilde{\bar{\eta}}_{g}^{BC}(\pi_3,E)	+\ch_g(x) \mod d\Omega^*(B).
\end{align}
By Proposition \ref{prop:1.03},
\begin{align}\label{eq:4.05}
\tilde{\eta}_g(\pi_1, P^{\ker D_X})=\tilde{\bar{\eta}}^{BC}(\pi_1,E)
\mod d\Omega^*(V).
\end{align}


So from Proposition \ref{D08}, (\ref{eq:4.04}) and (\ref{eq:4.05}), we have
\begin{thm}
	If $\dim X$ and $\dim Y$ are both odd,
there exists $x\in K_G^1(B)$, such that
modulo exact forms on $B$, for $g\in G$, 
\begin{multline}\label{eq:4.06}
\tilde{\bar{\eta}}^{BC}_g(\pi_3, E)=
\int_{Y^g}\widehat{\mathrm{A}}_g(TY, \nabla^{TY})\, \tilde{\bar{\eta}}^{BC}_g(\pi_1, E)
\\
+\int_{Z^g}\widetilde{\widehat{\mathrm{A}}}_g(TZ,\nabla^{TY,TX},\nabla^{TZ}  )
\, \ch_g(E, \nabla^{E})+\ch_g(x).
\end{multline}
\end{thm}


\

\textbf{Case 3}: $\dim X$ is even and $\dim Y$ is odd. Then $\dim Z$ is odd.

\

Recall that $\ker D_X^E=\ker D_{X,+}\oplus \ker D_{X,-}$ is 
a $\Z_2$-graded equivariant vector bundle over $V$.
We consider a new equivariant fiber bundle $\pi_1':W\sqcup V\to V$
with fiber $X\sqcup \{\mathrm{pt}\}$ and with vector bundle 
$(\ker D_{X,-})_+\oplus (\ker D_{X,+})_-$ over $V\times \{\mathrm{pt}\}$.

Let $\wi{\eta}_{g}^H(\pi_1',\mA)$ be the equivariant eta form with perturbation $\mA$ in (\ref{eq:1.38}) on
$\big(\ker D_{X,+}\oplus(\ker D_{X,-})_+\big)\oplus 
\big((\ker D_{X,+})_-\oplus \ker D_{X,-}\big)$.
By Proposition \ref{prop:1.05}, we have
	\begin{align}\label{eq:4.07}
\tilde{\eta}_g^H(\pi_1',\mA)=\tilde{\bar{\eta}}_g^{BC}(\pi_1,E) 
\mod d\Omega^*(V).
\end{align}

We consider the equivariant fiber bundle $\pi_3\sqcup \pi_2^-:
W\sqcup V\to B$ such that the geometric triple over $W$ is $\ul{E}$
and that over $V$ is $\ul{(\ker D_X^E)^{op}}$. 
Since $\ker D_Y^{\ker D_X^E}$ and $\ker D_Z^E$ are equivariant vector bundles, from Definition \ref{defn:1.01},
\begin{align}\label{eq:4.08}
\wi{\eta}_{g}^{BC}(\pi_3\sqcup\pi_2^-,E\sqcup(\ker D_X^E)^{op} )=\wi{\eta}_{g}^{BC}(\pi_3, E)-\wi{\eta}_{g}^{BC}(\pi_2, \ker D_X^E).
\end{align}
Since $\dim Z$ and $\dim Y$ are both odd, by Proposition \ref{prop:1.03},
there exists a perturbation operator $\mA_P$ of the 
fiberwise Dirac operator associated with the fiber bundle
$\pi_3\sqcup \pi_2^-$, such that modulo exact forms, for $g\in G$,
\begin{multline}\label{eq:4.09}
\wi{\eta}_{g}(\pi_3\sqcup \pi_2^-, \mA_P)=\wi{\eta}_{g}^{BC}(\pi_3\sqcup\pi_2^-,E\sqcup(\ker D_X^E)^{op} )
\\
+\frac{1}{2}\ch_g(\ker D_Z^E)-\frac{1}{2}\ch_g(\ker D_Y^{\ker D_X^E}).
\end{multline}
So from Proposition \ref{D08} and (\ref{eq:4.07})-(\ref{eq:4.09}),
we have
\begin{thm}
	If $\dim X$ is even and $\dim Y$ is odd,
	there exists $x\in K_G^0(B)$, such that
modulo exact forms on $B$, for $g\in G$, 
\begin{multline}
\tilde{\bar{\eta}}^{BC}_g(\pi_3, E)=
\tilde{\bar{\eta}}^{BC}_g(\pi_2, \ker D_X^E)
+\int_{Y^g}\widehat{\mathrm{A}}_g(TY, \nabla^{TY})\, \tilde{\bar{\eta}}^{BC}_g(\pi_1, E)
\\
+\int_{Z^g}\widetilde{\widehat{\mathrm{A}}}_g(TZ,\nabla^{TY,TX},\nabla^{TZ}  )
\, \ch_g(E, \nabla^{E})+\ch_g(x).
\end{multline}

\end{thm}

\

\textbf{Case 4}: $\dim X$ is even and $\dim Y$ is even. Then $\dim Z$ is even.

\

We consider the fiber bundles in Case 3.
In this case, (\ref{eq:4.07}) and (\ref{eq:4.08}) still hold.
Let $\tilde{D}_X$ be the fibrewise Dirac operator with respect to $\pi_1'$. Then $\ind(\tilde{D}_{X})=0\in K_G^0(V)$.
By \cite[Lemma 3.6]{Liu21}, $\ind D_Z^E-\ind D_Y^{\ker D_X^E}
=0\in K_G^0(B)$.
By Lemma \ref{lemma:2.05}, in this case, (\ref{eq:4.09}) is replaced 
by
\begin{align}
\wi{\eta}_{g}(\pi_3\sqcup \pi_2^-, \mA_P)=\wi{\eta}_{g}^{BC}(\pi_3\sqcup\pi_2^-,E\sqcup(\ker D_X^E)^{op} )
+\ch_g(x).
\end{align}
Here $x$ is an element of $K_G^1(B)$. 
\begin{thm}
	If $\dim X$ and $\dim Y$ are both even,
	there exists $x\in K_G^1(B)$, such that
modulo exact forms on $B$, for $g\in G$, 
\begin{multline}
\tilde{\bar{\eta}}^{BC}_g(\pi_3, E)=
\tilde{\bar{\eta}}^{BC}_g(\pi_2, \ker D_X^E)
+\int_{Y^g}\widehat{\mathrm{A}}_g(TY, \nabla^{TY})\, \tilde{\bar{\eta}}^{BC}_g(\pi_1, E)
\\
+\int_{Z^g}\widetilde{\widehat{\mathrm{A}}}_g(TZ,\nabla^{TY,TX},\nabla^{TZ}  )
\, \ch_g(E, \nabla^{E})+\ch_g(x).
\end{multline}
\end{thm}

	\vspace{3mm}\textbf{Acknowledgements}\ \
The author would like to thank Professors Xiaonan Ma and Weiping Zhang
for helpful discussions.
He would also like to thank the anonymous referee for various 
useful suggestions that improved the clarity and quality of the paper.
This research is partly supported by  Science and Technology Commission 
of Shanghai Municipality (STCSM), grant No.18dz2271000, Natural Science Foundation of Shanghai, grant No.20ZR1416700 and NSFC No.11931007.

\def\cprime{$'$}
\providecommand{\bysame}{\leavevmode\hbox to3em{\hrulefill}\thinspace}
\providecommand{\MR}{\relax\ifhmode\unskip\space\fi MR }
\providecommand{\MRhref}[2]{%
  \href{http://www.ams.org/mathscinet-getitem?mr=#1}{#2}
}
\providecommand{\href}[2]{#2}

\end{document}